\newtheorem{theorem}{Theorem}
\newtheorem{lemma}[theorem]{Lemma}
\newtheorem{remark}[theorem]{Remark}
\newtheorem{corollary}[theorem]{Corollary}
\let \om=\omega
\begin{document}

\title[Boltzmann equation with uncertainties]{
Time-asymptotic behavior of the Boltzmann equation with random inputs in whole space and its stochastic Galerkin approximation}

\author{Shi Jin}
\address{Shi Jin, School of Mathematical Sciences, Institute of Natural
	Sciences, MOE-LSC,  Shanghai Jiao Tong University, Shanghai,
	China}
\email{shijin-m@sjtu.edu.cn}

\author{Qi Shao}
\address{Qi Shao, School of Mathematical Sciences, Shanghai Jiao Tong University, Shanghai,
China}
\email{1shaoqi3@163.com}

\author{Haitao Wang}
\address{Haitao Wang, School of Mathematical Sciences, Institute of Natural
Sciences, MOE-LSC, CMA-Shanghai, Shanghai Jiao Tong University, Shanghai,
China}
\email{haitallica@sjtu.edu.cn}

\date{\today }
%\thanks{2020 Mathematics Subject Classification: 35Q20; 82C40.}

\begin{abstract}
We consider the Boltzmann equation with random uncertainties arising from the initial data and collision kernel in the {\it whole space}, along with their stochastic Galerkin (SG) approximations. By employing Green's function method, we show that, the higher-order derivatives of the solution with respect to the random variable exhibit polynomial decay over time. These results are then applied to analyze the SG method for the SG system and to demonstrate the polynomial decay of the numerical error over time.
\end{abstract}

\keywords{Boltzmann equation, uncertainty quantification, gPC stochastic Galerkin.}
\maketitle

%\subjclass[2000]{35Q20; 82C40.}

%\paragraph{Keywords:}

%\paragraph{AMS classification:}

%65M15 error bounds

%%%%%%%%%%%%%%%%%%%%%%%%%%%%%%%%%%%%%%%%%%%%%%%%%%%%%%%%%%%%%%%%%%%%%%%%%%%%%%
\section{Introduction}
\subsection{The Boltzmann equation}\label{The Boltzmann equation}
 Consider the Boltzmann equation in the whole space
\begin{equation*}
\begin{cases}
\partial_t F(x,t,\xi)+\xi \cdot \nabla_xF(x,t,\xi)=Q(F,F)(x,t,\xi), \ \ (x,t,\xi)\in \mathbb{R}^3\times \mathbb{R}_+\times\mathbb{R}^3,\\
F(x,0,\xi)=F_0(x,\xi).
\end{cases}
\end{equation*}
Here, $x = (x^1,x^2,x^3) \in \mathbb{R}^3$ represents the space variables, $t \geq 0$ is the time variable, and $\xi = (\xi^1,\xi^2,\xi^3) \in \mathbb{R}^3$ denotes the microscopic velocity. The left-hand side of the equation, $\partial_tF + \xi \cdot \nabla_x F$, corresponds to the transport part, which measures the rate of change over time along the particle path. According to the Boltzmann equation, this change is attributed to the collision operator $Q(F,F)$, which takes the form of binary collisions:
\begin{equation}\label{Q}
\begin{aligned}
Q(g,h)=\frac12 \int_{\mathbb{R}^3\times S^2,(\xi-\xi_*)\cdot \Omega \ge0}&(-g(\xi)h(\xi_*)-h(\xi)g(\xi_*)+g(\xi')h(\xi_*')+h(\xi')g(\xi_*'))\\
&\cdot B(\xi-\xi_*, \Omega)d\xi_*d\Omega,
\end{aligned}
\end{equation}
where
\begin{equation*}
 \begin{cases}
\xi'=\xi-[(\xi-\xi_*)\cdot \Omega]\Omega, \\
\xi_*'=\xi_*+[(\xi-\xi_*)\cdot \Omega]\Omega. \\
\end{cases}
\end{equation*}
The collision kernel $B(\xi-\xi_*, \Omega)$ depends on the inter-molecular forces between the particles. For the hard sphere model, it can be expressed as:
\begin{equation*}
B(\xi-\xi_*, \Omega)=(\xi-\xi_*)\cdot \Omega,\ \ \Omega\in S^2.
\end{equation*}

The global Maxwellian states $\mathsf{M}=\mathsf{M}_{[\rho,v,\theta]}$ with $(\rho,v,\theta)$ being constants,
\begin{equation*}
\mathsf{M}_{[\rho,v,\theta]}=\frac{\rho}{(2\pi R\theta)^{\frac32}}\exp(-\frac{|\xi-v|^2}{2R\theta})
\end{equation*}
satisfy $Q(\mathsf{M},\mathsf{M})=0$ and are equilibriums of the Boltzmann equation. Here $v=(v^1,v^2,v^3)$ is the macroscopic velocity. $R$, $\rho$ and $\theta$ represent the gas constant, macroscopic density and macroscopic temperature, respectively. 

 Next consider the linearization of the Boltzmann equation around a global Maxwellian $\mathsf{M}$:
\begin{equation*}
F=\mathsf{M}+\sqrt{\mathsf{M}}f,
\end{equation*}
where the linearization leads to
\begin{equation}\label{LBE}
 \begin{cases}
\partial_tf+\xi \cdot \nabla_xf=\mathsf{L}f,\ \mathrm{linearized} \ \mathrm{Boltzmann}\ \mathrm{equation},\\
\mathsf{L}f=\frac{2Q(\sqrt{\mathsf{M}}f,\mathsf{M})}{\sqrt{\mathsf{M}}}, \ \ \ \,  \ \mathrm{linearized}\ \mathrm{collision} \ \mathrm{operator}.
\end{cases}
\end{equation}
Further, we consider the nonlinear problem for the perturbation from equilibrium
\begin{equation*}\label{NLBE}
\partial_tf+\xi \cdot \nabla_xf=\mathsf{L}f+\Gamma(f,f),%\ \mathrm{nonlinearized} \ \mathrm{Boltzmann}\ \mathrm{equation},\\
\end{equation*}
where the nonlinear collision operator $\Gamma$ is given by
\begin{equation}\label{gamma10}
\Gamma(f,g)=\frac{Q(\sqrt{\mathsf{M}}f,\sqrt{\mathsf{M}}g)}{\sqrt{\mathsf{M}}}.%\quad \quad \ \ , \ \mathrm{nonlinearized}\ \mathrm{collision} \ \mathrm{operator}.
\end{equation}

Although the Boltzmann equation is derived from first-principle, uncertainties may still arise from the collision kernel, background equilibrium state, the initial data and the boundary data. Since the collision kernel depends on the inter-molecular forces between the particles, an incomplete characterization of the interaction mechanism may lead to uncertainties. The uncertainties in the background equilibrium state, the initial data and the boundary data originate from observational or modelling errors. A rigorous quantification of these uncertainties is essential for validating model predictions and enhancing the robustness of simulations, thereby improving the reliability of theoretical analysis and helping model calibration. 

In this paper, we study the Boltzmann equations with random uncertainties from the initial data and collision kernel in the whole space.
 We introduce the initial value problem of the Boltzmann equation with uncertainties:
\begin{equation}\label{UB}
 \begin{cases}
\partial_tF(x,t,\xi,z)+\xi\cdot \nabla_xF(x,t,\xi,z)=Q^z(F,F)(x,t,\xi,z), \ (x,t,\xi,z)\in \mathbb{R}^3\times \mathbb{R}_+\times\mathbb{R}^3\times I_z,\\
F(x,0,\xi,z)=F_0(x,\xi,z),\\
\end{cases}
\end{equation}
where the uncertain variable $z$ is one-dimensional, and $I_z$ has finite support $|z| \le C_z$
(which is the case, for example, for the uniform and Beta distributions). %Without loss of generality, let $C_z=1$. 
Both the initial data $F_0(x,\xi,z)$ and the collision operator $Q^z$ are influenced by uncertainties. The uncertainty of $Q^z$ comes from $B(\xi-\xi_*, \Omega,z)$, which satisfies 
\begin{equation*}
\begin{aligned}
B(|\xi-\xi_*|, \Omega,z)=|\xi-\xi_*|b(\frac{(\xi-\xi_*)\cdot \Omega}{|\xi-\xi_*|},z),
\end{aligned}
\end{equation*}
and $b(\eta,z)$ satisfies 
\begin{equation}\label{b1}
\begin{aligned}
C_{b_1}\le\int_0^{\frac{\pi}{2}}b(\cos{\theta},z)\sin{\theta}d\theta \le C_{b_2},%\ \text{for}\ \text{some}\ C_{b_1},C_{b_2}>0.
\end{aligned}
\end{equation}
here, $C_{b_1}, C_{b_2}>0$, and both of $C_{b_1}$ and $C_{b_2}$ are independent of $z$.
Further, in order to study the higher-order derivatives of the solution with respect to $z$, $b(\eta,z)$ is assumed to satisfy 
\begin{equation}\label{b2}
\begin{aligned}
\sum_{k=1}^\alpha|\partial^k_z b(\eta,z)|\le C_{b_*},\ \text{for}\ \text{some}\ C_{b_*}>0 \ \text{and}\ \text{given}\ \alpha \in \mathbb{N}_+,
\end{aligned}
\end{equation}
where $C_{b_*}$ is independent of $z$.
From \cite{liu2018hypocoercivity}, Property \eqref{b1} and Property (\ref{b2}) are reasonable assumptions for the uncertainty quantification problem.

Assuming that the background Maxwellian
$\mathsf{M}$ has no uncertainties, we obtain the nonlinear Boltzmann equation for perturbation with uncertainties as follows:
\begin{equation}\label{UC}
 \begin{cases}
\partial_tf(x,t,\xi,z)+\xi\cdot \nabla_xf(x,t,\xi,z)=\mathsf{L}^zf(x,t,\xi,z)+\Gamma^z(f,f),\ x\in \mathbb{R}^3,\\
f(x,0,\xi,z)=f_0(x,\xi,z).\\
\end{cases}
\end{equation}
Here, the uncertainty in the collision operator $Q^z$ leads to the uncertainty in $\mathsf{L}^z$ and $\Gamma^z$. By \eqref{LBE} and \eqref{gamma10}, $\mathsf{L}^z$ and $\Gamma^z$ are given by
\begin{equation}\label{Q1}
\begin{aligned}
\mathsf{L}^zf=\int_{\mathbb{R}^3\times S^2,(\xi-\xi_*)\cdot \Omega \ge0}\sqrt{\mathsf{M}}\mathsf{M}_*(-\frac{f}{\sqrt{\mathsf{M}}}-\frac{f_*}{\sqrt{\mathsf{M}_*}}+\frac{f'}{\sqrt{\mathsf{M}'}}+\frac{f^{\prime}_*}{\sqrt{\mathsf{M}^{\prime}_*}})b(\frac{(\xi-\xi_*)\cdot \Omega}{|\xi-\xi_*|},z)|\xi-\xi_*|d\xi_*d\Omega,\\
\end{aligned}
\end{equation}
and
\begin{equation}\label{Q2}
\begin{aligned}
\Gamma^z(f,g)=\frac12 \int_{\mathbb{R}^3\times S^2,(\xi-\xi_*)\cdot \Omega \ge0}\sqrt{\mathsf{M}_*}(-fg_*-f_*g+f'g^{\prime}_*+f^{\prime}_*g')b(\frac{(\xi-\xi_*)\cdot \Omega}{|\xi-\xi_*|},z)|\xi-\xi_*|d\xi_*d\Omega,\\
\end{aligned}
\end{equation}
with the abbreviations $\phi_*=\phi(\xi_*),\phi^\prime=\phi(\xi^\prime),\phi^{\prime}_*=\phi(\xi^\prime_*)$.

\subsection{Notations}
Let us define some notations used in this paper. We denote $\left \langle
\xi \right \rangle ^{s}=(1+|\xi |^{2})^{s/2}$, $s\in {\mathbb{R}}$. For the
microscopic variable $\xi $, we denote the Lebesgue spaces
\begin{equation*}
\|g\|_{L_{\xi }^{q}}=\Big(\int_{{\mathbb{R}}^{3}}|g|^{q}d\xi \Big)^{1/q}\text{
if }1\leq q<\infty \text{,}\quad \quad \|g\|_{L_{\xi }^{\infty }}=\text{ess} \sup_{\xi
\in {\mathbb{R}}^{3}}|g(\xi )|\text{,}
\end{equation*}%
and the weighted norms can be defined by
\begin{equation*}
\|g\|_{L_{\xi ,\beta }^{q}}=\Big(\int_{{\mathbb{R}}^{3}}\left \vert \left
\langle \xi \right \rangle ^{\beta }g\right \vert ^{q}d\xi \Big)^{1/q}\text{
if }1\leq q<\infty \text{,}\quad \quad \|g\|_{L_{\xi ,\beta }^{\infty
}}=\text{ess} \sup_{\xi \in {\mathbb{R}}^{3}}\left \vert \left \langle \xi \right
\rangle ^{\beta }g(\xi )\right \vert \text{.}
\end{equation*}%
%in ${\mathbb{R}}^{3}$
The $L_{\xi
}^{2}$ inner product will be denoted by $(\cdot
,\cdot)_\xi$, i.e.,
\begin{equation*}
(f,g)_\xi=\int_{\mathbb{R}^3} f(\xi )\overline{g(\xi )}d\xi
\text{.}
\end{equation*}%

For the space variable $x$, we have similar notations, namely,
\begin{equation*}
\|g\|_{L_{x}^{q}}=\left( \int_{{\mathbb{R}^{3}}}|g|^{q}dx\right) ^{1/q}\text{
if }1\leq q<\infty \text{,}\quad \quad \|g\|_{L_{x}^{\infty }}=\text{ess} \sup_{x\in {%
\mathbb{R}^{3}}}|g(x)|\text{.}
\end{equation*}%
%Furthermore, we define the high order Sobolev norm: let $s\in {\mathbb{N}}$
%and define
%\begin{equation*}
%\| g\| _{H_{\xi }^{s}}=\sum_{|\alpha |\leq s}\|
%\partial _{\xi }^{\alpha }g\| _{L_{\xi }^{2}}\text{,\  \  \  \  \  \ }%
%\| g\| _{H_{x}^{s}}=\sum_{|\alpha |\leq s}\|
%\partial _{x}^{\alpha }g\| _{L_{x}^{2}}\text{,}
%\end{equation*}%
%where $\alpha $ is any multi-index with $|\alpha |\leq s$.

Next, with $\mathcal{X}$ and $\mathcal{Y}$ being normed spaces, we define
\begin{equation*}
\left \Vert g\right \Vert _{\mathcal{X(Y)}}=\| \ \| g\| _{\mathcal{Y}}\| _{\mathcal{X}}\text{.}
\end{equation*}%=\left \Vert g\right \Vert _{\mathcal{XY}}

 Finally, for the uncertainty variable $z$, we denote 
\begin{equation*}
\Vert g\Vert _{X^{\xi,2}_{x,r}}=\text{ess} \sup_{z
\in {I_z}}\Vert g\Vert _{L_{x}^{r}(L_{\xi}^{2})},\ \Vert g\Vert _{X_{\xi,2}^{x,r}}=\text{ess} \sup_{z
\in {I_z}}\Vert g\Vert _{L_{\xi}^{2}(L_{x}^{r})},
\end{equation*}
and 
\begin{equation*}
\Vert g\Vert _{X^{\xi,\infty;\beta}_{x,r}}=\text{ess} \sup_{z
\in {I_z}}\Vert g\Vert _{L_{x}^{r}(L_{\xi,\beta}^{\infty})},\ \Vert g\Vert _{X_{\xi,\infty;\beta}^{x,r}}=\text{ess} \sup_{z
\in {I_z}}\Vert g\Vert _{L_{\xi,\beta}^{\infty}(L_{x}^{r})}.
\end{equation*}
Since the weighted velocity space in this paper is only $L^\infty_\xi$, the notation is simplified by
\begin{equation*}
\Vert g\Vert _{X^{\xi,\infty;\beta}_{x,r}}=:\Vert g\Vert _{X^{\xi;\beta}_{x,r}},\ \Vert g\Vert _{X_{\xi,\infty;\beta}^{x,r}}=:\Vert g\Vert _{X_{\xi;\beta}^{x,r}}.
\end{equation*}

 In order to study the SG system, the Boltzmann equations for multiple species are considered. A vector-valued function $\vec{g}_K(x,t,\xi)$ is defined by
$$
\vec{g}_K=(g_1,\cdots,g_k,\cdots,g_K)^T.
$$
The norm on $L_{\xi}^2$ (and similarly for the other spaces) is defined by 
$$
\left\|\vec{g}_K\right\|^2_{L_{\xi}^2}=\sum_{k=1}^{K}\int_{\mathbb{R}^3}g_k^2d\xi, \ \ \text{for} \ g_k\in L_{\xi}^2, \ 1\le k\le K.
$$

For simplicity of notations, hereafter, we abbreviate $\le C$ to $\lesssim$ , where $C$ is a constant depending
only on fixed numbers.
 
In order to emphasize the time dependence of the solution operator, we denote the solution operator to the linearized Boltzmann equation (\ref{LBE}) as $\mathbb{G}^t$. 

\subsection{Review of previous works and main objectives}\label{main objectives}
Let us briefly review the relevant works on the Boltzmann equation and uncertainty problems. In terms of theoretical analysis, the works in \cite{guo2004boltzmann,liu2004energy,liu2004boltzmann,Nishida1978fluid,ukai1974existence} provide classical results in the study of the Boltzmann equation. These studies contribute to the analysis of stability and large-time behavior of solutions, offering insights into the dynamical evolution of perturbations and decay over time. To investigate the quantitative behavior of solutions, \cite{liu2004green,[LiuYu1],liu2011solving} introduced the Green's function approach for the initial value problem of the Boltzmann equation. 

From the classical result in \cite{ukai1974existence}, the large-time behavior of solutions to the Boltzmann equation on the torus domain exhibits {\it exponential} decay, which can also be derived using the energy method (see \cite{briant2015from, MouNeu06}, for example). Based on \cite{briant2015from, MouNeu06}, for uncertainty problems, Liu-Jin established a general framework by energy method in \cite{liu2018hypocoercivity} to study linear and nonlinear kinetic equations with random uncertainties from the initial data or collision kernel, along with their stochastic Galerkin approximations on the torus. Then, Daus-Liu-Jin investigated the case of the nonlinear multi-species Boltzmann equation with random uncertainty on the torus in \cite{EJin}. Studying the uncertainty problems will contribute to verify the spectral accuracy of the Generalized Polynomial Chaos approach within the Stochastic Galerkin framework (referred to as gPC-SG) approximation for the numerical solution. The gPC-SG \cite{g1,g2,g3,g4,g5,g6} is a widely used and efficient method for the numerical solution of such equations with uncertainties. In comparison to the classical Monte Carlo method, if the solution is sufficiently smooth, the gPC-SG approach offers spectral accuracy in the random space while the Monte Carlo method achieves convergence with only half-order accuracy. 

 Previous studies on the Boltzmann equation with uncertainties have primarily focused on {\it torus} domains. It is natural to consider the Boltzmann equation with uncertainties in the {\it whole space}. Compared to the case of the torus,
 the large-time behavior of the solution in the whole space exhibits {\it polynomial} decay rather than exponential decay by the classical results (see \cite{shu}, for example). 

{\it This is the first study of the uncertainty quantification (referred to as UQ) problem for the Boltzmann equation in the whole space.}
 One of our primary objectives is to analyze the large-time behavior of solutions and their higher-order derivatives with respect to random inputs. Study of the higher-order derivatives in the random uncertainty variable is important for sensitivity analysis \cite{smith2024}, as well as the study of numerical accuracy in uncertainty quantification.

Let $\mathbb{G}^t(z)$ be the solution operator of linearized problem of (\ref{UC}):
\begin{equation}\label{LUC}
 \begin{cases}
\partial_tg(x,t,\xi,z)+\xi\cdot \nabla_xg(x,t,\xi,z)=\mathsf{L}^zg(x,t,\xi,z),\ x\in \mathbb{R}^3,\\
g(x,0,\xi,z)=g_0(x,\xi,z).\\
\end{cases}
\end{equation}
Since the uncertainty does not alter the fundamental structure of the linearized Boltzmann equation, the solution operator $\mathbb{G}^t(z)$ still solves the linearized problem (\ref{LUC}) for any fixed $z$.
By the classical results in \cite{shu,[LiuYu1],liu2011solving}, whose details are given in the Theorem \ref{CLIP} of this paper, $\mathbb{G}^t(z)$ satisfies that
\begin{equation}\label{GTZ}
\begin{aligned}
\left\|\mathbb{G}^t(z)g_0\right\|_{L^{\infty}_{\xi,\beta}(L_x^2)}&\lesssim\frac{1}{(1+t)^{\frac34}}(\left\|g_0(z)\right\|_{L^{\infty}_{\xi,\beta}(L_x^1)}+\left\|g_0(z)\right\|_{L^{\infty}_{\xi,\beta}(L_x^{2})}),\\
\left\|\mathbb{G}^t(z)g_0\right\|_{L^{\infty}_{\xi,\beta}(L_x^\infty)}&\lesssim\frac{1}{(1+t)^{\frac32}}(\left\|g_0(z)\right\|_{L^{\infty}_{\xi,\beta}(L_x^1)}+\left\|g_0(z)\right\|_{L^{\infty}_{\xi,\beta}(L_x^{\infty})}),\\
\left\|\mathbb{G}^t(z)g_0\right\|_{L^{\infty}_{\xi,\beta}(L_x^2)}&\lesssim\left\|g_0(z)\right\|_{L^{\infty}_{\xi,\beta}(L_x^2)},\ \text{for} \ \beta>3/2.
\end{aligned}
\end{equation}
Compared to the requirements of the energy method on the initial data, the semigroup method {\it does not impose any regularity with respect to $x$ or $\xi$ on the initial data } $g_0$.

When we consider the large-time behavior of the higher-order derivatives with respect to $z$, a challenge arises: due to the uncertainty in the collision kernel, the time decay of higher-order derivatives of the solution with respect to $z$ slows down. 
Let us take linearized equation (\ref{LUC}) as an illustration.
By differentiating equation (\ref{LUC}) with respect to $z$, we obtain
\begin{equation}\label{UCD}
 \begin{cases}
\partial_t(\partial_zg)+\xi\cdot \nabla_x(\partial_zg)=\mathsf{L}^z(\partial_zg)+(\partial_z\mathsf{L}^z)g,\ x\in \mathbb{R}^3,\\
\partial_zg(x,0,\xi,z)=\partial_zg_0(x,\xi,z).\\
\end{cases}
\end{equation}
Here, for $1\le k \le \alpha$, $(\partial^k_z\mathsf{L}^z)g$ satisfies 
\begin{equation}\label{Q5}
\begin{aligned}
(\partial^k_z\mathsf{L}^z)g=\int_{\mathbb{R}^3\times S^2,(\xi-\xi_*)\cdot \Omega \ge0}\sqrt{\mathsf{M}}\mathsf{M}_*(-\frac{g}{\sqrt{\mathsf{M}}}-\frac{g_*}{\sqrt{\mathsf{M}_*}}+\frac{g'}{\sqrt{\mathsf{M}'}}+\frac{g^{\prime}_*}{\sqrt{\mathsf{M}^{\prime}_*}})\partial^k_zb(\frac{(\xi-\xi_*)\cdot \Omega}{|\xi-\xi_*|},z)|\xi-\xi_*|d\xi_*d\Omega.\\
\end{aligned}
\end{equation}

Apply Duhamel's principle to express $\partial_zg$ by $\mathbb{G}^t(z)$ as follows 
\begin{equation*}
\partial_zg(x,t,\xi,z)=\mathbb{G}^{t}(z)\partial_zg_0+\int_0^t\mathbb{G}^{t-\tau}(z)(\partial_z\mathsf{L}^z)g(\tau)d\tau.
\end{equation*}
Assuming $\partial_zg_0=0$, then by the estimates of $\mathbb{G}^{t}(z)$ and $(\partial_z\mathsf{L}^z)g$ in \eqref{GTZ} and \eqref{Lz}, we have
\begin{equation*}
\begin{aligned}
\left\|\partial_zg(z)\right\|_{L^{\infty}_{\xi,\beta}(L_x^2)}&=\left\| \int_0^t\mathbb{G}^{t-\tau}(z)(\partial_z\mathsf{L}^z)g(\tau)d\tau\right\|_{L^{\infty}_{\xi,\beta}(L_x^2)}\\
&\lesssim\int_0^t\left\| (\partial_z\mathsf{L}^z)g(\tau)\right\|_{L^{\infty}_{\xi,\beta-1}(L_x^2)}d\tau\\
&\lesssim\int_0^t(1+\tau)^{-\frac34}d\tau(\left\|g_0(z)\right\|_{L^{\infty}_{\xi,\beta}(L_x^1)}+\left\|g_0(z)\right\|_{L^{\infty}_{\xi,\beta}(L_x^{2})})\\
&\lesssim(1+t)^{\frac14}(\left\|g_0(z)\right\|_{L^{\infty}_{\xi,\beta}(L_x^1)}+\left\|g_0(z)\right\|_{L^{\infty}_{\xi,\beta}(L_x^{2})}).
\end{aligned}
\end{equation*}
The time decay rate of $\partial_zf$ estimate in $L_x^2$ changes from $(1+t)^{-3/4}$ decay to $(1+t)^{\frac14}$ growth. Even worse, the $L_x^2$ estimate for $\partial^2_zg$ exhibits faster time growth by repeating this procedure. The decay estimate of $\mathbb{G}^{t}(z)$ obtained by the energy method \cite{duan2011,kawashima1990} is similar to that obtained by the semigroup method and leads to the same time growth as well. Notably, this issue does not occur in the case of torus domain, where the solution decays exponentially over time. Moreover, for the problem (\ref{LUC}) on the torus, the large-time behavior of $\partial_zg$ and higher-order terms $\partial_z^kg$ still exhibits exponential decay through Duhamel's principle.

Therefore, the main challenge we face in this paper is that directly applying the standard semigroup estimate leads to a growth in time for higher-order derivatives with respect to random variable. Thanks to the Green's function approach, we are able to establish refined semigroup estimate and restore the time decay of higher-order derivatives similar to the problem without uncertainties, up to some logarithmic growth factor.

\subsection{The main results}\label{Main result}
We focus on the initial value problem of the Boltzmann equation with uncertainties, i.e., problem (\ref{UC}), where both the initial data $f_0$ and the collision kernel are influenced by uncertainties. By the Green's function approach, we have the following theorem:
\begin{theorem}\label{T1}
 Assuming that the collision kernel satisfies (\ref{b1}) and (\ref{b2}),
then for given $\alpha\in \mathbb{N}$, $C_{b_1},C_{b_2}$ and $C_{b_*}$ in (\ref{b1}) and (\ref{b2}), there exist positive constants $\delta$ and $C_\alpha$ such that if 
$$
\sum_{k=0}^\alpha(\left\|\partial^k_zf_0\right\|_{X_{\xi;\beta}^{x,1}}+\left\|\partial^k_zf_0\right\|_{X_{\xi;\beta}^{x,\infty}})<\delta,
$$
then there exists a unique global solution $f=f(x,t,\xi,z)$ to equation (\ref{UC}), satisfying 
$$
\left\|f(t)\right\|_{X_{\xi;\beta}^{x,2}}\le \frac{ C_\alpha}{(1+t)^{\frac34}}(\left\|f_0\right\|_{X_{\xi;\beta}^{x,1}}+\left\|f_0\right\|_{X_{\xi;\beta}^{x,\infty}}),
$$
and
$$
\left\|f(t)\right\|_{X_{\xi;\beta}^{x,\infty}}\le \frac{ C_\alpha}{(1+t)^{\frac32}}(\left\|f_0\right\|_{X_{\xi;\beta}^{x,1}}+\left\|f_0\right\|_{X_{\xi;\beta}^{x,\infty}}).
$$
Moreover, for $1\le k\le \alpha$, it holds that
$$
\left\|\partial^k_zf(t)\right\|_{X_{\xi;\beta}^{x,2}}\le \frac{ C_\alpha(\ln(1+t))^{k-1}}{(1+t)^{\frac34}}\sum_{s=0}^k(\left\|\partial^s_zf_0\right\|_{X_{\xi;\beta}^{x,1}}+\left\|\partial^s_zf_0\right\|_{X_{\xi;\beta}^{x,\infty}}),
$$
and
$$
\left\|\partial^k_zf(t)\right\|_{X_{\xi;\beta}^{x,\infty}}\le \frac{ C_\alpha(\ln(1+t))^{k-1}}{(1+t)^{\frac32}}\sum_{s=0}^k(\left\|\partial^s_zf_0\right\|_{X_{\xi;\beta}^{x,1}}+\left\|\partial^s_zf_0\right\|_{X_{\xi;\beta}^{x,\infty}}).
$$

In this theorem, all constants are independent of $z$.
	\end{theorem}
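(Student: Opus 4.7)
I would prove Theorem \ref{T1} by strong induction on the derivative order $k$, with two ingredients: the classical Green's-function decay estimates \eqref{GTZ} from Theorem \ref{CLIP} for the base case, and a refined version of those estimates for microscopic sources to power the induction. The heuristic in Subsection \ref{main objectives} shows exactly why the refinement is needed: feeding the $X^{x,2}_{\xi;\beta}$ decay rate $(1+t)^{-3/4}$ into a naive Duhamel iteration produces growth $(1+t)^{1/4}$, and only by exploiting the microscopic character of the forcing terms that arise from differentiating in $z$ can this be avoided.

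\textbf{Base case $k=0$.} I would write
\begin{equation*}
f(t) = \mathbb{G}^{t}(z)f_{0} + \int_{0}^{t}\mathbb{G}^{t-\tau}(z)\,\Gamma^{z}(f,f)(\tau)\, d\tau
\end{equation*}
and run a contraction in the weighted norm
\begin{equation*}
N[f]:=\sup_{t\ge 0}\Bigl\{(1+t)^{3/4}\Vert f(t)\Vert_{X^{x,2}_{\xi;\beta}} + (1+t)^{3/2}\Vert f(t)\Vert_{X^{x,\infty}_{\xi;\beta}}\Bigr\}.
\end{equation*}
The linear piece is handled by \eqref{GTZ}. Because $\Gamma^{z}$ is bilinear and satisfies a pointwise weighted bound uniformly in $z$ inherited from \eqref{b1}, the nonlinear piece is quadratic in $N[f]$, and the time convolutions close after the usual split at $\tau=t/2$. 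The smallness hypothesis $\delta$ then absorbs the quadratic contribution and produces a unique global solution; a parallel argument with the $L^{1}_{x}\to L^{2}_{x}$ line of \eqref{GTZ} yields the auxiliary $L^{1}_{x}$ bounds needed later.

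\textbf{Inductive step.} Assume the theorem is known for all $\partial_{z}^{s}f$ with $s<k$. Differentiating \eqref{UC} via Leibniz's rule gives
\begin{equation*}
(\partial_{t}+\xi\cdot\nabla_{x})\partial_{z}^{k}f = \mathsf{L}^{z}(\partial_{z}^{k}f) + \sum_{j=1}^{k}\binom{k}{j}(\partial_{z}^{j}\mathsf{L}^{z})(\partial_{z}^{k-j}f) + \sum_{j_{1}+j_{2}+j_{3}=k}c_{j_{1}j_{2}j_{3}}\,(\partial_{z}^{j_{1}}\Gamma^{z})(\partial_{z}^{j_{2}}f,\partial_{z}^{j_{3}}f),
\end{equation*}
so Duhamel against $\mathbb{G}^{t}(z)$ expresses $\partial_{z}^{k}f(t) = \mathbb{G}^{t}(z)\partial_{z}^{k}f_{0} + \int_{0}^{t}\mathbb{G}^{t-\tau}(z)\,\mathcal{S}_{k}(\tau)\,d\tau$ in terms of the previously-controlled derivatives. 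The structural observation that drives the whole argument is that $\mathcal{S}_{k}$ is microscopic: differentiation in $z$ touches only the angular kernel $b(\eta,z)$ and leaves the algebraic collision-invariant identities untouched, so every summand of $\mathcal{S}_{k}$ remains orthogonal to $\sqrt{\mathsf{M}}\cdot\operatorname{span}\{1,\xi,|\xi|^{2}\}$ in $L^{2}_{\xi}$. Combining this with the refined linear estimate described below and the induction hypothesis $\Vert \partial_{z}^{k-1}f(\tau)\Vert_{X^{x,2}_{\xi;\beta}}\lesssim (1+\tau)^{-3/4}(\ln(1+\tau))^{k-2}$, the Duhamel integral reduces to a borderline-critical convolution that, after the standard split at $\tau=t/2$, produces exactly one additional $\ln(1+t)$ factor per induction level, giving the stated $(1+t)^{-3/4}(\ln(1+t))^{k-1}$ bound; the $X^{x,\infty}_{\xi;\beta}$ bound follows from the analogous computation at rate $(1+t)^{-3/2}$.

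\textbf{Main obstacle.} The technical crux is the refined Green's-function estimate for $\mathbb{G}^{t}(z)$ on microscopic data. The three inequalities \eqref{GTZ} are insufficient because they do not exploit orthogonality to the fluid modes, and it is precisely this orthogonality that converts the growing iteration of Subsection \ref{main objectives} into a convergent one. My plan is to derive the refinement from the Liu--Yu Green's-function decomposition $\mathbb{G}^{t}=\mathbb{E}^{t}+\mathbb{W}^{t}$: at low frequencies the projection of the wave part $\mathbb{W}^{t}$ onto the fluid eigenspace vanishes linearly in the Fourier variable when acting on microscopic data, so inverse Fourier transform recovers the extra polynomial time gain, while at high frequencies the exponential damping of $\mathbb{G}^{t}$ provides more than enough margin. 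A secondary bookkeeping difficulty is that the Leibniz expansion couples $\partial_{z}^{k}f$ to every lower derivative $\partial_{z}^{s}f$ with $0\le s\le k$, which accounts for the sum $\sum_{s=0}^{k}$ on the right-hand side, and uniformity in $z$ is automatic because all constants depend only on the $z$-independent quantities in \eqref{b1} and \eqref{b2}.
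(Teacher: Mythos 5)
Your base case matches the paper's Lemma \ref{INLf}, and your structural observation that every $z$-differentiated collision term is microscopic is exactly the paper's Lemma \ref{bz}(II). The gap is in the inductive step: plugging the induction hypothesis into a \emph{single} Duhamel formula does not close, even with the refined estimate you describe. Your refinement gives a one-sided gain: the propagator acting on microscopic data picks up an extra $(1+t-\tau)^{-1/2}$. But the source term that is linear in the lower-order derivatives, e.g.\ $(\partial_z\mathsf{L}^z)(\partial_z^{k-1}f)$, can only be measured through the norms your induction hypothesis controls, namely $X^{x,2}_{\xi;\beta}$ and $X^{x,\infty}_{\xi;\beta}$ (there is no decaying $L^1_x$ bound for the solution or its $z$-derivatives, and none is provable — $L^1_x$ norms do not decay). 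To land in $L^2_x$ you must then take the source in $L^2_x$, and by \eqref{Lz} the best convolution you can form is
\begin{equation*}
\int_0^t (1+t-\tau)^{-\frac12}\,(1+\tau)^{-\frac34}\bigl(\ln(1+\tau)\bigr)^{k-2}\,d\tau \;\sim\; (1+t)^{-\frac14}\bigl(\ln(1+t)\bigr)^{k-2},
\end{equation*}
which is far from the claimed $(1+t)^{-3/4}(\ln(1+t))^{k-1}$; the $L^\infty_x$ estimate fails analogously. So the "borderline-critical convolution producing one log per level" does not materialize from your ingredients.

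What the paper does instead — and what your plan is missing — is to \emph{iterate Duhamel}: substitute the integral representation of $f$ (and of the lower derivatives) into the term $\int_0^t\mathbb{G}^{t-\tau}(z)(\partial_z\mathsf{L}^z)f\,d\tau$, as in \eqref{xianxing}, so that each factor $\partial_z\mathsf{L}^z$ ends up flanked on \emph{both} sides by fluid parts of the Green's operator with microscopic projections attached, i.e.\ terms of the form $\mathbb{G}_0^{t-\tau}\mathsf{P}_1(\partial_z\mathsf{L}^z)\mathsf{P}_1\mathbb{G}_0^{\tau}$ acting on the initial data (Lemmas \ref{UCf}(II), \ref{TC1}, \ref{TC2}). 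This yields the two-sided gain $(1+t-\tau)^{-\frac32(\frac1r-\frac12)-\frac12}(1+\tau)^{-\frac32(1-\frac1r)-\frac12}$ with the exponent $r$ chosen differently on $[0,t/2]$ ($r=1$, using the $L^1_x$ norm of the \emph{initial data}) and on $[t/2,t]$ ($r=2$), which is precisely what recovers $(1+t)^{-3/4}$; the remaining source after the substitution is quadratic, $\Gamma^z(f,f)$, whose $L^1_x$ norm decays like $(1+\tau)^{-3/2}$ via Lemma \ref{bz}, so no microscopic gain is needed there. The logarithms then arise not from a vague borderline in a single convolution but from the middle factor $(1+\tau_1-\tau)^{-1}$ in the doubly iterated integrals (the terms $I_2$, $I_4$ in Lemma \ref{TC2}), which is why the first derivative carries no log and $\partial_z^kf$ carries $(\ln(1+t))^{k-1}$. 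One must also handle the decomposition into $\mathbb{G}_0$ and $\mathbb{G}_K$ separately (the extra half-power only attaches to the fluid part) and the loss of one $\xi$-weight in \eqref{Lz} via Lemma \ref{GTS}; these are technical but necessary pieces your outline does not account for.
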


\begin{remark}
Theorem \ref{CLIP} presents the classical results for the initial value problem of the linearized Boltzmann equation. 
The large-time behavior of the solutions to problems (\ref{UC}) with uncertainties is consistent with the classical results. For the estimates of $\partial^k_zf$, the time decay exhibits an additional logarithmic growth depending on $k$. 
\end{remark}

Another objective of this paper is to verify the spectral accuracy of the gPC-SG approximation for the numerical solution. We begin by briefly introducing the gPC-SG system. Let $f$ be the solution to equation (\ref{UC}), and we want to seek an approximate solution $f^K$ in the following form:
\begin{equation}\label{RSG1}
\begin{aligned}
f(x,t,\xi,z)\approx \sum_{{k}=1}^Kf_{{k}}(x,t,\xi)\psi_{{k}}(z)=:f^K(x,t,\xi,z).
\end{aligned}
\end{equation}
 Here $\{\psi_k(z)\}$ are orthonormal gPC basis functions satisfying 
\begin{equation*}
\begin{aligned}
\int_{I_z}\psi_{k}(z)\psi_{j}(z)\pi(z)dz=\delta_{kj},\  1\le k,j\le K,
\end{aligned}
\end{equation*}
and $\pi(z)$ is the probability distribution function of $z$. 

We assume the technical condition 
\begin{equation}\label{psik}
\|\psi_k\|_{L^\infty_z}\le C k^n,\ k\ge1,
\end{equation}
with a parameter $n\ge 0$ and a positive constant $C$. For the case $I_z=[-1,1]$ with uniform distribution, $\psi_k$'s are the normalized Legendre polynomials,
and \eqref{psik} holds with $n=1/2$. For the case $I_z=[-1,1]$ with the distribution $\pi(z)=\frac{2}{\sqrt{\pi\sqrt{1-z^2}}}$,
 $\psi_k$'s are the normalized Chebyshev polynomials and \eqref{psik} holds with $n= 0$. 

One can expand $f$ by 
\begin{equation*}
\begin{aligned}
f(x,t,\xi,z)= \sum_{k=1}^{\infty}\widehat{f}_{k}(x,t,\xi)\psi_{k}(z), \ \ \ \widehat{f}_{k}(x,t,\xi):=\int_{I_z}f(x,t,\xi,z)\psi_{k}(z)\pi(z)dz.
\end{aligned}
\end{equation*}
Define the Galerkin projection operator $P_{K}$ as 
\begin{equation}\label{PK}
\begin{aligned}
P_Kf(x,t,\xi,z):= \sum_{k=1}^{K}\widehat{f}_{k}(x,t,\xi)\psi_{k}(z).
\end{aligned}
\end{equation}

By inserting ansatz (\ref{RSG1}) into the nonlinear Boltzmann equation \eqref{UC} and applying the standard Galerkin projection, one obtains the gPC-SG system for $f_{k}$, $1\le k\le K$:
\begin{equation}\label{RPCSG}
 \begin{cases}
\partial_tf_{k}+\xi\cdot \nabla_xf_{k}=\mathsf{L}_{k}(f^K)+\Gamma_{k}(f^K,f^K),\ x\in \mathbb{R}^3,\\
f_{k}(x,0,\xi)=f_{k}^0(x,\xi), \\
\end{cases}
\end{equation}
with the initial data given by 
\begin{equation*}
f^0_{k}(x,\xi):=\int_{I_z}f_0(x,\xi,z)\psi_{k}(z)\pi(z)dz.
\end{equation*}
For more details about $\mathsf{L}_{k}$ and $\Gamma_{k}$, see Section \ref{Galerkin method}.

Corresponding to $f^K$ and $f_0$, the vector-valued functions $\vec{f}_K(x,t,\xi)$ and $\vec{f}_{\mathrm{in},K}$ are defined by
\begin{equation}\label{fini}
\vec{f}_K=(f_1,\cdots,f_k,\cdots,f_K)^T,\  \vec{f}_{\mathrm{in},K}=(f_1^0,\cdots,f_k^0,\cdots,f_K^0)^T.
\end{equation}

When the collision kernel contains uncertainties, the integral of its product with the basis function $\psi_k(z)$ leads to a growth in the nonlinear terms $\Gamma_{k}(f^K,f^K)$ with respect to $K$. To absorb this growth, following the approach in \cite{liu2018hypocoercivity}, we introduce weighted estimates, which enables one to obtain the desired estimates with initial data independent of $K$.

Define a weight matrix $\mathbf{W}$ by $\mathbf{W}=\{ w_{ij}\}_{K\times K}$, and $w_{ij}$ satisfy that
\begin{equation}\label{weight}
 \begin{cases}
w_{ij}=i^m, \ \text{if} \ i=j,\\
w_{ij}=0, \ \text{if} \ i\ne j, \\
\end{cases}
\end{equation}
here, $m> n+1$, and $n$ depends on the selected orthonormal gPC basis functions in \eqref{psik}.

Assuming the collision kernel is linear in $z$, 
we can utilize the semigroup method and Theorem \ref{T1} to solve the gPC-SG system and provide the estimates of the gPC error, and
they satisfy the following theorem:
\begin{theorem}\label{ESG} 
Assuming that $\{\psi_k(z)\}$ in \eqref{RSG1} satisfies 
\begin{equation*}
\|\psi_k\|_{L^\infty_z}\le C k^n,\ k\ge1,\ \text{for} \ \text{some} \ C>0, 
\end{equation*}
and $m>n+1$. If the collision kernel is linear in $z$, then we have the following:

$(\mathrm{I})$ 
Let $f^K$ be the solution of (\ref{RPCSG}) with the initial data $f_0$. $\vec{f}_K$ is the vector-valued function corresponding to $f^K$ and $\vec{f}_{\mathrm{in},K}$ is the the initial data corresponding to $f_0$, as defined in \eqref{fini}. There exist positive constants $\delta$ and $C_S$ independent of $K$ such that if 
$$
\left\|\mathbf{W}\vec{f}_{\mathrm{in},K}\right\|_{L^{\infty}_{\xi,\beta}(L_x^\infty)}+\left\|\mathbf{W}\vec{f}_{\mathrm{in},K}\right\|_{L^{\infty}_{\xi,\beta}(L_x^{1})}=:\varepsilon_S(\vec{f}_{\mathrm{in},K})<\delta, \ \beta > 3/2,
$$
 then 
\begin{equation*}
\begin{aligned}
\left\|\mathbf{W}\vec{f}_K(t)\right\|_{L^{\infty}_{\xi,\beta}(L_x^2)}\le C_S(1+t)^{-\frac34}\varepsilon_S(\vec{f}_{\mathrm{in},K}),
\end{aligned}
\end{equation*}
\begin{equation*}
\begin{aligned}
\left\|\mathbf{W}\vec{f}_K(t)\right\|_{L^{\infty}_{\xi,\beta}(L_x^{\infty})}\le C_S(1+t)^{-\frac32}\varepsilon_S(\vec{f}_{\mathrm{in},K}).
\end{aligned}
\end{equation*}
 Moreover, if $\mathsf{P}_0\vec{f}_{\mathrm{in},K}=\vec{0}$, then we will get extra $(1+t)^{-1/2}$ decay rate for each estimate above. More details about the macroscopic projection $\mathsf{P}_0$ are given in Section \ref{macro-micro projections}.  

$(\mathrm{II})$ Let $f$ be the solution of equation (\ref{UC}) . For given $\alpha\in\mathbb{N}_+$, there exist positive constants $\delta$ and $C_{\alpha,e}$ such that if 
$$
\sum_{s=0}^\alpha\sum_{k=1}^\infty\left\|k^m\int_{I_z}\psi_k(z)\pi(z)\partial^s_zf_0dz\right\|_{L^{\infty}_{\xi,\beta}(L_x^{1}\cap L_x^{\infty})}=:\varepsilon_\alpha(f_0)<\delta,
$$
 then the gPC error $f^e:=f-f^K$ satisfies that
\begin{equation*}
\begin{aligned}
\left\|f^e(t)\right\|_{X_{\xi;\beta}^{x,2}}&\le\frac{ C_{\alpha,e}(\ln(1+t))^{\alpha-1}}{K^\alpha(1+t)^{\frac12}}\varepsilon_\alpha(f_0),\\
\left\|f^e(t)\right\|_{X_{\xi;\beta}^{x,\infty}}&\le\frac{ C_{\alpha,e}(\ln(1+t))^{\alpha-1}}{K^\alpha(1+t)^{\frac54}}\varepsilon_\alpha(f_0).
\end{aligned}
\end{equation*}
In this theorem, all constants are independent of $z$ and $K$.
\end{theorem}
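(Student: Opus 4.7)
I would work with the conjugated vector $\vec{F}_K := \mathbf{W}\vec{f}_K$. Since the collision kernel is linear in $z$, the projected operators $\mathsf{L}_k$ and $\Gamma_k$ split, via the three-term recurrence of $\{\psi_k\}$, into a diagonal block driven by the $z$-independent operator $\mathsf{L}^{(0)}$ (whose semigroup is exactly the $\mathbb{G}^t$ of Theorem \ref{CLIP}) plus a tridiagonal lower-order coupling $\mathbf{L}_c$ and a bilinear nonlinearity. Conjugation by $\mathbf{W}$ multiplies each coupling coefficient by ratios $i^m j^m/k^m$, and the triple product $S_{kij}=\int_{I_z}\psi_k\psi_i\psi_j\pi\,dz$ combined with these ratios and \eqref{psik} is summable in $(i,j)$ uniformly in $k$ precisely when $m>n+1$ (this is the estimate used in \cite{liu2018hypocoercivity,EJin}); the tridiagonal part is controlled because $k^m/(k\pm 1)^m$ is bounded. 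Duhamel's formula against $\mathrm{diag}(\mathbb{G}^t,\ldots,\mathbb{G}^t)$ using the decay rates \eqref{GTZ}, followed by a standard continuity argument, then closes the estimate under smallness of $\varepsilon_S(\vec{f}_{\mathrm{in},K})$. The refinement under $\mathsf{P}_0\vec{f}_{\mathrm{in},K}=\vec{0}$ follows by invoking, componentwise, the classical enhanced Green's-function decay $(1+t)^{-5/4}$ (resp.\ $(1+t)^{-2}$) valid when the macroscopic data vanishes, using also that $\Gamma^z$ maps into the microscopic part so the source integrand in Duhamel retains zero macroscopic projection.

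\textbf{Part (II).} I would split $f^e = (f - P_K f) + (P_K f - f^K)$. For the projection residual, orthonormality and \eqref{psik} give
\begin{equation*}
\|f - P_K f\|_{L^\infty_z(\mathcal{X})} \lesssim \sum_{k > K} k^n \|\widehat{f}_k\|_{\mathcal{X}},
\end{equation*}
and the standard spectral smoothness-to-decay argument (iterated integration by parts in $z$ using the Sturm--Liouville equation for $\psi_k$) converts this tail into $K^{-\alpha}$ times $\sup_{s\leq\alpha}\|\partial_z^s f\|_{L^\infty_z(\mathcal{X})}$, which Theorem \ref{T1} controls by $(\ln(1+t))^{\alpha-1}(1+t)^{-3/4}\varepsilon_\alpha(f_0)$ in $L^2_x$ and analogously in $L^\infty_x$. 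The dynamical error $P_K f - f^K$ satisfies the weighted SG system with source given by the $(I-P_K)$-part of $\mathsf{L}^z f + \Gamma^z(f,f)$ together with $\Gamma$-type cross terms involving $f^K$; applying Part (I) to this linearization around $\vec{f}_K$, followed by one Duhamel integration of $\mathbb{G}^{t-\tau}$ against the source, transfers the $K^{-\alpha}(\ln(1+t))^{\alpha-1}$ factor but costs an extra $(1+t)^{1/4}$ (since the source has $L^1_x \cap L^\infty_x$ structure with $(1+t)^{-3/4}$-type decay), producing the announced $(1+t)^{-1/2}$ in $L^2_x$ and $(1+t)^{-5/4}$ in $L^\infty_x$.

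\textbf{Main obstacle.} The hard part is not the $K$-growth, which $\mathbf{W}$ with $m>n+1$ handles cleanly, but the polynomial Green's-function decay in whole space: every Duhamel integration of a nontrivial source costs $(1+t)^{1/4}$, which is exactly the mechanism that in Theorem \ref{T1} produced the logarithmic loss $(\ln(1+t))^{k-1}$ and which here manifests as the downgrade from the solution rate $(1+t)^{-3/4}$ to the error rate $(1+t)^{-1/2}$. Keeping these logarithmic factors consistent while simultaneously (a)~invoking Theorem \ref{T1} with $\alpha$ derivatives in $z$, (b)~applying the weighted triple-product bound with $m>n+1$ for the SG nonlinearity, and (c)~ensuring that every constant is uniform in both $K$ and $z$, is where the bookkeeping requires the most care.
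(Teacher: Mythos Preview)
Your Part~(II) outline is essentially the paper's argument and would go through once Part~(I) is in hand. The gap is in Part~(I).

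You propose to run Duhamel against the \emph{diagonal} semigroup $\mathrm{diag}(\mathbb{G}^t,\dots,\mathbb{G}^t)$, treating the tridiagonal linear coupling $\mathbf{L}_c$ as a source alongside the nonlinearity, and to close by smallness of $\varepsilon_S$. This cannot work in the whole-space setting. The term $\mathbf{W}\mathbf{L}_c\mathbf{W}^{-1}\vec{F}_K$ is \emph{linear} in $\vec{F}_K$, so smallness of the data does not make it small relative to the left-hand side; and in whole space each Duhamel iteration of such a linear microscopic source costs a $\ln(1+t)$ factor---this is precisely the content of Lemmas~\ref{TC1}--\ref{TC2}, where $k$ applications of $\partial_z\mathsf{L}^z$ (which has exactly the same structure as $\mathbf{L}_c$) produce $(\ln(1+t))^{k-1}$. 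Summing the resulting Neumann series gives, schematically,
\[
\frac{1}{(1+t)^{3/4}}\sum_{k\ge 0}(C\gamma)^k(\ln(1+t))^{\max(k-1,0)},
\]
which diverges once $C\gamma\ln(1+t)\ge 1$, regardless of how small $\gamma$ or the data are. A bootstrap formulation of the same idea fails for the same reason: the linear coupling contributes an $O(1)$ multiple of the ansatz norm with the wrong time weight. There is also a secondary obstruction: the $L^p_x\!\to\!L^r_x$ trick of Lemma~\ref{TC1} feeds on $\|g_0\|_{L^1_x}$ through the explicit representation $g(\tau)=\mathbb{G}^\tau g_0$; in a bootstrap on $\vec{F}_K$ you only carry $L^2_x$ and $L^\infty_x$ of $\vec{F}_K(\tau)$, not $L^1_x$, so the interpolation in $r$ is unavailable.

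The paper does not perturb off the diagonal semigroup. It absorbs the coupling into the linear operator and builds the \emph{full} multi-species semigroup $\mathbf{G}^t$ directly: a spectral-gap estimate for $\mathbf{L}=\mathbf{B}\mathsf{L}$ (Lemma~\ref{ESGE}) and for its conjugate $\mathbf{W}\mathbf{L}\mathbf{W}^{-1}$, both uniform in $K$ thanks to the diagonal dominance of the tridiagonal matrix $\mathbf{B}$ under the hypothesis $|b_1|\le\gamma|b_0|$ with $\gamma<1/(2^m+1)$, is followed by the spectral analysis of $-i\xi\cdot\eta\mathbf{I}+\mathbf{L}$ (Lemma~\ref{SPS}). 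This yields for $\mathbf{G}^t$ the same $L^p_x\!\to\!L^r_x$ decay as the scalar $\mathbb{G}^t$ (Lemma~\ref{PCSGTL}, Corollary~\ref{PCSGTL1}). With that semigroup in hand, only the \emph{quadratic} term $\vec{\Gamma}_K$ remains as a Duhamel source, and it closes by smallness exactly as in \eqref{4.11}--\eqref{4.12} once Lemma~\ref{Gamma1} (your $m>n+1$ triple-product bound) controls $\mathbf{W}\vec{\Gamma}_K$.
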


\begin{remark}
The large-time behavior of the solution for the Boltzmann equation system (\ref{RPCSG}) is identical to that of the Boltzmann equation without uncertainties. The weighted estimates of the solution allows the initial data to be independent of $K$.
As the number $K$ of expansion terms increases, the numerical error decreases, which implies that the accuracy of the gPC-SG system is guaranteed and reliable.
\end{remark}
We now outline the key ideas and strategy to prove our main results. First of all, using the Green's function approach in Theorem \ref{UIf}, we obtain more refined estimates for the solution operator $\mathbb{G}^{t}(z)$ of the linearized problem \eqref{LUC}. By applying {\it $L_x^p\text{-}L_x^r$ type decay estimates}, we establish the large-time behavior of solutions at the linear level.

Secondly, the large-time behavior of the derivatives and higher-order derivatives of the solution with respect to $z$ is considered. Taking linearized equation \eqref{LUC} as an illustration and returning to the challenge mentioned in Section \ref{main objectives}, we need two observations to help estimate $\partial_zg$.  One is that the projection operators $\mathsf{P}_0$ and $\mathsf{P}_1$ in the macro-micro decomposition depends only on the background Maxwellian $\mathsf{M}$.
As a result, the null space of $\partial_z\mathsf{L}^z$ remains the same as that of $\mathsf{L}^z$, and {\it $(\partial_z\mathsf{L}^z)g$ is a microscopic part}.
Another observation is that by Theorem \ref{UIf}, Green's operator acting on {\it the microscopic part provide an additional $(1+t)^{-1/2}$ decay rate}.
Therefore, by macro-micro decomposition, we write $\partial_zg$ as
\begin{equation*}
\partial_zg(x,t,\xi,z)=\mathbb{G}^{t}(z)\partial_zg_0+\int_0^t\mathbb{G}^{t-\tau}(z)(\mathsf{P}_1(\partial_z\mathsf{L}^z)\mathsf{P}_1)\mathbb{G}^{\tau}(z)g_0d\tau.
\end{equation*}
The introduction of microscopic projection operator $\mathsf{P}_1$ and further details are given in Section \ref{macro-micro projections}. 

Suppose $\partial_zg_0=0$,  
by $L_x^p\text{-}L_x^r$ type decay estimates for $\mathbb{G}^t(z)$ in Lemma \ref{UCf} and the additional $(1+t)^{-1/2}$ decay rate, for $1\le r\le 2$, we have 
\begin{equation*}
\begin{aligned}
\left\|\partial_zg\right\|_{X_{\xi;\beta}^{x,2}}&=\left\|\int_0^t\mathbb{G}^{t-\tau}(z)(\mathsf{P}_1(\partial_z\mathsf{L}^z)\mathsf{P}_1)\mathbb{G}^{\tau}(z)g_0d\tau\right\|_{X_{\xi;\beta}^{x,2}}\\
&\lesssim\int^t_0(1+t-\tau)^{-\frac32(\frac{1}{r}-\frac{1}{2})-\frac12}(1+\tau)^{-\frac32(1-\frac{1}{r})-\frac12}(\left\|g_0\right\|_{X_{\xi;\beta}^{x,1}}+\left\|g_0\right\|_{X_{\xi;\beta}^{x,2}})d\tau\\
&=\int^{\frac{t}{2}}_0(1+t-\tau)^{-\frac32(\frac{1}{1}-\frac{1}{2})-\frac12}(1+\tau)^{-\frac32(1-\frac{1}{1})-\frac12}(\left\|g_0\right\|_{X_{\xi;\beta}^{x,1}}+\left\|g_0\right\|_{X_{\xi;\beta}^{x,2}})d\tau\\
&\ \ +\int^t_{\frac{t}{2}}(1+t-\tau)^{-\frac32(\frac{1}{2}-\frac{1}{2})-\frac12}(1+\tau)^{-\frac32(1-\frac{1}{2})-\frac12}(\left\|g_0\right\|_{X_{\xi;\beta}^{x,1}}+\left\|g_0\right\|_{X_{\xi;\beta}^{x,2}})d\tau\\
&\lesssim\frac{1}{(1+t)^{\frac34}}(\left\|g_0\right\|_{X_{\xi;\beta}^{x,1}}+\left\|g_0\right\|_{X_{\xi;\beta}^{x,2}}).
\end{aligned}
\end{equation*}
 More details about the above estimate are given in Section \ref{the solutions}. Therefore, we derive the estimates for $\partial_zg$ without any growth in time. 
 In the final step, following the approach in \cite{liu2004green}, we propose an appropriate ansatz for $f$ and close the nonlinear problem (\ref{UC}). 

In the second part of this paper, the results of studying uncertainty problems are applied to analyze the SG methods for the gPC-SG system and demonstrate the time decay of the numerical error. First, we obtain the weighted estimate of ${f}^K$ by using the semigroup method, which requires the spectral structure of the multi-species linearized operator. Then we decompose the gPC error $f^e$ as follows
\begin{equation*}
\begin{aligned}
f^e:=f-f^K=f-P_Kf+P_Kf-f^K,
\end{aligned}
\end{equation*}
where $P_K$ is the Galerkin projection operator defined in \eqref{PK}.
By Theorem \ref{T1} and the standard estimate on the projection error, the estimate of $f-P_Kf$ is obtained. Finally, treating $f-P_Kf$ as a source term, we apply  Duhamel's principle to the nonlinear Boltzmann equation system for $P_Kf-f^K$, thereby obtaining the estimate for $f^e$.

\subsection{Organization of the paper} The rest of this paper is organized as follows: in Section \ref{Preliminaries}, we introduce some preliminaries for the Boltzmann equation and The Green's function method. Section \ref{Linearized problem} presents refined estimates for the solution operators of the linearized Boltzmann equation in the whole space, along with the estimates for higher-order derivatives of solution. Section \ref{Nonlinear problem} is dedicated to the proof of Theorem \ref{T1}. Section \ref{gPC-SG method} covers the gPC-SG approximation for the uncertain Boltzmann equation and the numerical error.

\section{Preliminaries}\label{Preliminaries}

In this preliminary section, we will review some basic properties of the linearized collision
operator $\mathsf{L}$ and the nonlinear operator $\Gamma$. Afterwards, we introduce the Green's function approach for the initial value problem of the Boltzmann equation in the whole space. %Finally, we introduce the long-short wave decomposition.

\subsection{Linearized collision operator and macro-micro projections}\label{macro-micro projections}
Consider perturbation around a normalized global Maxwellian, 
$$\mathsf{M}_{[1,\vec{0},1]}=\frac{1}{(2\pi)^{3/2}}e^{-\frac{|\xi|^2}{2}},$$
the linearized collision operator $\mathsf{L}$ defined in \eqref{LBE} can be written as % from \cite{liu2011solving}:
\begin{equation*}\label{L}
\mathsf{L}g(\xi)=(-\nu+\mathsf{K})g(\xi)\equiv -\nu(\xi)g(\xi)+\int_{\mathbb{R}^3}K(\xi,\xi_*)g(\xi_*)d\xi_*.
\end{equation*}
Here, $\nu(\xi)$ satisfies that
\begin{equation*}\label{V}
%C_{\nu_1}(1+|\xi|)\le \nu(\xi)\le C_{\nu_2}(1+|\xi|),\ \text {for some} \ C_{\nu_1},C_{\nu_2}>0.
C_{1}(1+|\xi|)\le \nu(\xi)\le C_{2}(1+|\xi|),\ \text {for some} \ C_{1},C_{2}>0.
%\nu(\xi)\sim 1+|\xi|.
\end{equation*}
 Regarding $\mathsf{K}$, we have the following properties:
\begin{lemma}\label{K}For any $\beta\ge 0$, there exist positive constants $C(\beta)$ and $C$ such that
\begin{equation*}
 \begin{cases}
\lVert \mathsf{K}g \rVert_{L^{\infty}_{\xi,\beta+1}}\le C(\beta)\lVert g \rVert_{L^{\infty}_{\xi,\beta}},\\
\lVert \mathsf{K}g \rVert_{L^{\infty}_{\xi}}\le C\lVert g \rVert_{L^{2}_{\xi}}.
\end{cases}
\end{equation*}
\end{lemma}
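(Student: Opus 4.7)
\textbf{Proof plan for Lemma \ref{K}.} The plan is to exploit the classical Grad kernel representation for the integral operator $\mathsf{K}$ of the hard-sphere linearized collision operator. In this representation, the kernel $K(\xi,\xi_*)$ splits as $K=K_2-K_1$ with explicit gain and loss pieces, and satisfies the pointwise bound
\begin{equation*}
|K(\xi,\xi_*)|\lesssim \frac{1}{|\xi-\xi_*|}\exp\!\Big(-\tfrac{1}{8}|\xi-\xi_*|^2-\tfrac{(|\xi|^2-|\xi_*|^2)^2}{8|\xi-\xi_*|^2}\Big),
\end{equation*}
which I would quote from the literature rather than rederive. Both inequalities then reduce to uniform-in-$\xi$ estimates on integrals involving this kernel against a weight.

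For the first inequality, I would factor $|g(\xi_*)|\le \langle\xi_*\rangle^{-\beta}\|g\|_{L^\infty_{\xi,\beta}}$ so it suffices to prove
\begin{equation*}
\sup_{\xi\in\mathbb{R}^3}\langle\xi\rangle^{\beta+1}\int_{\mathbb{R}^3}|K(\xi,\xi_*)|\,\langle\xi_*\rangle^{-\beta}\,d\xi_* \le C(\beta).
\end{equation*}
The key observation is that the exponential factor $\exp(-(|\xi|^2-|\xi_*|^2)^2/(8|\xi-\xi_*|^2))$ localises the effective integration region to a thin energy shell where $|\xi_*|\approx|\xi|$, which together with the Gaussian decay in $|\xi-\xi_*|$ yields an additional factor of $\langle\xi\rangle^{-1}$. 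Concretely I would split into the region $\{|\xi-\xi_*|\le |\xi|/2\}$, where $\langle\xi_*\rangle\sim\langle\xi\rangle$ and the two exponential factors yield rapid decay in $|\xi|$, and its complement, where $e^{-|\xi-\xi_*|^2/8}$ alone produces enough decay to give the claimed bound; changing variables to $(u,v)=(\xi-\xi_*,(|\xi|^2-|\xi_*|^2)/|\xi-\xi_*|)$ in the first region makes the gain of one power transparent.

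For the second inequality, I would apply Cauchy--Schwarz in $\xi_*$ to obtain
\begin{equation*}
|\mathsf{K}g(\xi)|\le \Big(\int_{\mathbb{R}^3}|K(\xi,\xi_*)|^2\,d\xi_*\Big)^{1/2}\|g\|_{L^2_\xi},
\end{equation*}
so it remains to show that $\int|K(\xi,\xi_*)|^2\,d\xi_*$ is uniformly bounded in $\xi$. Squaring the Grad bound produces a $|\xi-\xi_*|^{-2}$ singularity which is locally integrable in $\mathbb{R}^3_{\xi_*}$, while the Gaussian factor provides decay at infinity, so the integral is finite and the supremum in $\xi$ is controlled by a universal constant.

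The main obstacle is the first inequality, where one has to be careful to extract the extra $\langle\xi\rangle^{-1}$ factor using the energy-shell localisation; the $L^\infty_\xi$--$L^2_\xi$ bound is then a routine consequence of Cauchy--Schwarz together with the kernel estimate. Everything else is bookkeeping once the Grad pointwise bound on $K$ is in hand.
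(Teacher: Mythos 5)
The paper does not prove Lemma \ref{K} at all: it is stated in the preliminaries as a classical property of the hard-sphere operator $\mathsf{K}$ (Grad's estimates), with the relevant literature cited elsewhere. Your proposal supplies essentially the standard proof from that literature, and its outline is correct: quoting Grad's pointwise bound $|K(\xi,\xi_*)|\lesssim|\xi-\xi_*|^{-1}\exp(-c|\xi-\xi_*|^2-c(|\xi|^2-|\xi_*|^2)^2/|\xi-\xi_*|^2)$, the weighted $L^\infty_\xi$ bound reduces to $\sup_\xi\langle\xi\rangle^{\beta+1}\int|K(\xi,\xi_*)|\langle\xi_*\rangle^{-\beta}d\xi_*<\infty$, and the gain of the extra power $\langle\xi\rangle^{-1}$ does come from the energy-shell localisation, made precise by your change of variables $(u,v)$ (equivalently, the observation that for large $|\xi|$ only a solid angle of size $O(|\xi|^{-1})$ of directions $\widehat{\xi-\xi_*}$ survives the second exponential); the $L^2_\xi\to L^\infty_\xi$ bound is indeed just Cauchy--Schwarz plus the fact that $|\xi-\xi_*|^{-2}e^{-c|\xi-\xi_*|^2}$ is integrable in $\mathbb{R}^3$ uniformly in $\xi$. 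One small inaccuracy in your wording: in the region $\{|\xi-\xi_*|\le|\xi|/2\}$ the two exponential factors do \emph{not} produce rapid (faster than polynomial) decay in $|\xi|$ --- near the energy shell $|\xi_*|\approx|\xi|$ both exponents can be $O(1)$ --- they produce exactly the single extra factor $\langle\xi\rangle^{-1}$, which is what your subsequent change-of-variables remark actually delivers; so the mechanism you invoke is right, but the phrase ``rapid decay'' should be dropped. With that fixed, the plan is a complete and standard route to the lemma, more self-contained than the paper, which simply takes the estimates as known.
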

Next we introduce the macro-micro projection. From \cite{liu2011solving}, the kernel of the linearized collision operator $\mathsf{L}$ is the span of
\begin{equation*}
 \begin{cases}
\chi_0 \equiv \mathsf{M}^{1/2} ,\\
\chi_i \equiv (\xi^i-v^i)\mathsf{M}^{1/2} ,i=1,2,3,\\
\chi_4 \equiv \frac{1}{\sqrt{6}}(|\xi-v|^2-3)\mathsf{M}^{1/2} .\\
\end{cases}
\end{equation*}
Then, define 
\begin{equation*}
\mathsf{P}_0g\equiv \sum^4_{j=0}(g,\chi_j)_\xi\chi_j,
\end{equation*}
and
\begin{equation*}
\mathsf{P}_1\equiv \mathsf{I}-\mathsf{P}_0.
\end{equation*}
Further, $\mathsf{L}$ has the following explicit spectral-gap estimate:
\begin{equation}\label{SGE}
(\mathsf{L}g,g)_\xi\le-\nu_1(\mathsf{P}_1g,\mathsf{P}_1g)_\xi,
\end{equation}
here, $\nu_0>\nu_1>0$, and $\nu_0\equiv\nu(0)=\min\nu(\xi)$.

The linear operator $\mathsf{L}$ and the nonlinear operator $\Gamma$ in (\ref{gamma10}) satisfy that
\begin{equation}\label{P0P1}
\mathsf{P}_0\mathsf{L}=\mathsf{L}\mathsf{P}_0=0,\ \mathsf{P}_0\Gamma(h,u)=0, %\ \text{and}\ \mathsf{P}_1\Gamma(f,g)=\Gamma(f,g),
\end{equation}
and we also have the following lemma from \cite{liu2004green}:
\begin{lemma}\label{Gamma}For any $\beta\ge 0$, there exists positive constant $C(\beta)$ such that
\begin{equation*}
\begin{aligned}
\left\|\Gamma(h,u) \right\|_{L^{\infty}_{\xi,\beta}(L_x^1)}&\le C(\beta)  \left\|h \right\|_{L^{\infty}_{\xi,\beta+1}(L_x^2)}\left\|u \right\|_{L^{\infty}_{\xi,\beta+1}(L_x^2)},\\
\left\|\Gamma(h,u) \right\|_{L^{\infty}_{\xi,\beta}(L_x^{2})}&\le C(\beta)  \left\|h \right\|_{L^{\infty}_{\xi,\beta+1}(L_x^{\infty})}\left\|u \right\|_{L^{\infty}_{\xi,\beta+1}(L_x^{2})},\\
\left\|\Gamma(h,u) \right\|_{L^{\infty}_{\xi,\beta}(L_x^{\infty})}&\le C(\beta)  \left\|h\right\|_{L^{\infty}_{\xi,\beta+1}(L_x^{\infty})}\left\|u \right\|_{L^{\infty}_{\xi,\beta+1}(L_x^{\infty})}.
\end{aligned}
\end{equation*}
%\begin{equation*}
%\lVert \nu^{-1}(\xi)\Gamma(f,g) \rVert_{L^{\infty}_{\xi,\beta}}\le C(\beta)\lVert f \rVert_{L^{\infty}_{\xi,\beta}}\lVert g \rVert_{L^{\infty}_{\xi,\beta}}.
%\end{equation*}
\end{lemma}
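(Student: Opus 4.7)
The plan is to split $\Gamma = \Gamma_+ - \Gamma_-$ into its gain and loss parts as in (\ref{Q2}), establish a pointwise-in-$\xi$ bilinear bound (still a function of $x$) for each piece, and then combine Minkowski's and H\"older's inequalities in $x$ to obtain the three $L_x$ estimates, with conjugate exponent triples $(q_1,q_2,q_3)=(1,2,2),\ (2,\infty,2),\ (\infty,\infty,\infty)$ satisfying $1/q_1=1/q_2+1/q_3$ that match the three lines of the lemma.

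For the loss part, using $|\xi-\xi_*|\lesssim\langle\xi\rangle\langle\xi_*\rangle$ and the $\Omega$-integrability in \eqref{b1},
\begin{equation*}
\langle\xi\rangle^\beta|\Gamma_-(h,u)(x,\xi)|\lesssim\langle\xi\rangle^{\beta+1}\int \sqrt{\mathsf{M}_*}\,\langle\xi_*\rangle\bigl(|h(x,\xi)||u(x,\xi_*)|+|h(x,\xi_*)||u(x,\xi)|\bigr)d\xi_*.
\end{equation*}
Taking the $L_x^{q_1}$ norm, applying H\"older in $x$ to each pointwise product, and moving the $L_x^{q_3}$ norm inside the $\xi_*$-integral by Minkowski, the remaining velocity integral reduces to $\int \sqrt{\mathsf{M}_*}\langle\xi_*\rangle^{-\beta}d\xi_*<\infty$ by the Gaussian weight. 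Taking the supremum in $\xi$ (and absorbing the extra factor $\langle\xi\rangle$ into the $\beta+1$ weight on $h$) completes the loss contribution.

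For the gain part, the pre-collisional velocities $\xi',\xi_*'$ sitting inside $h$ and $u$ forbid the same peeling, so one redistributes the $\langle\xi\rangle^\beta$ weight via conservation of energy $|\xi|^2+|\xi_*|^2=|\xi'|^2+|\xi_*'|^2$ (whence $\langle\xi\rangle\le\langle\xi'\rangle\langle\xi_*'\rangle$) together with the invariance $|\xi-\xi_*|=|\xi'-\xi_*'|\lesssim\langle\xi'\rangle\langle\xi_*'\rangle$ of the relative speed:
\begin{equation*}
\langle\xi\rangle^\beta|\Gamma_+(h,u)(x,\xi)|\lesssim\int \sqrt{\mathsf{M}_*}\,\langle\xi'\rangle^{\beta+1}\langle\xi_*'\rangle^{\beta+1}\bigl(|h(x,\xi')||u(x,\xi_*')|+|u(x,\xi')||h(x,\xi_*')|\bigr)b\,d\xi_*d\Omega.
\end{equation*}
After taking the $L_x^{q_1}$ norm and applying H\"older in $x$, the bounds $\langle\xi'\rangle^{\beta+1}\|h(\cdot,\xi')\|_{L_x^{q_2}}\le\|h\|_{L^\infty_{\xi,\beta+1}(L_x^{q_2})}$ and the analogous one for $u$ can be pulled outside the $\xi_*\,d\Omega$-integral, leaving $\int\sqrt{\mathsf{M}_*}\,b\,d\xi_*d\Omega<\infty$ by \eqref{b1} and the Gaussian tail of $\mathsf{M}_*$.

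The main obstacle is the gain-term step: the weight $\langle\xi\rangle^\beta$ on the left must be transferred onto the pre-collisional variables $\xi',\xi_*'$ without losing control of the kernel factor $|\xi-\xi_*|$, and the cost of the transfer is precisely the extra power of $\langle\cdot\rangle$ that forces the weight $\beta+1$ on $h$ and $u$ in the statement. Once both pointwise bilinear estimates are in hand, all three $L_x^p$ inequalities of the lemma follow uniformly in $\xi$ by picking the appropriate H\"older conjugate triple.
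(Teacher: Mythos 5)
Your proof is correct: the gain/loss splitting, the transfer of the weight and of the kernel factor $|\xi-\xi_*|=|\xi'-\xi_*'|\lesssim\langle\xi'\rangle\langle\xi_*'\rangle$ onto the post-collisional velocities via energy conservation, and the Minkowski--H\"older step in $x$ with the exponent triples $(1,2,2)$, $(2,\infty,2)$, $(\infty,\infty,\infty)$ constitute exactly the standard argument for this bilinear estimate in the weighted $L^\infty_\xi$ framework. The paper itself gives no proof --- the lemma is quoted from \cite{liu2004green} --- and your argument reproduces the one behind that citation (the shift from $\beta$ to $\beta+1$ playing the role of the usual factor $\nu(\xi)\sim\langle\xi\rangle$), so there is nothing to add.
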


Define
\begin{equation}\label{Q6}
\begin{aligned}
(\partial^k_z\Gamma^z)(h,u)=\frac12 \int_{\mathbb{R}^3\times S^2,(\xi-\xi_*)\cdot \Omega \ge0}\sqrt{\mathsf{M}_*}(-hu_*-h_*u+h'u^{\prime}_*+h^{\prime}_*u')\partial^k_zb(\frac{(\xi-\xi_*)\cdot \Omega}{|\xi-\xi_*|},z)|\xi-\xi_*|d\xi_*d\Omega.\\
\end{aligned}
\end{equation}
The following lemma provides the detailed properties of  $\mathsf{L}^z$ and $\Gamma^z$. The proof of the first part of this lemma is based on Lemma \ref{Gamma}, \eqref{Q1}, \eqref{Q2}, \eqref{Q5} and \eqref{Q6}, as well as conditions \eqref{b1} and \eqref{b2}. The second part of this lemma is similar to \eqref{P0P1}. Its proof is based on that the kernel of the linear operator $\mathsf{L}$ depends only on the background equilibrium state $\mathsf{M}$, and it is exactly the same as that of $\mathsf{L}^z$. Here the detailed proof is omitted.

\begin{lemma}\label{bz}
 Assuming that the collision kernel satisfies (\ref{b1}) and (\ref{b2}), then
the following statements $(\mathrm{I})-(\mathrm{II})$ are true.

$(\mathrm{I})$ For given $\alpha\in \mathbb{N}, \beta\ge 0$, there exists positive constant $C^*_\beta$ independent of $z$ such that $\mathsf{L}^z$ and $\Gamma^z$ satisfy, for $0\le k \le \alpha$,
\begin{equation}\label{Lz}
\begin{aligned}
\|(\partial^k_z\mathsf{L}^z)h\|_{L^{\infty}_{\xi,\beta}}\le C^*_\beta \|h\|_{L^{\infty}_{\xi,\beta+1}},
\end{aligned}
\end{equation}
and
\begin{equation}\label{gamma}
\begin{aligned}
\left\|(\partial^k_z\Gamma^z)(h,u) \right\|_{L^{\infty}_{\xi,\beta}(L_x^1)}&\le C^*_\beta \left\|h(z) \right\|_{L^{\infty}_{\xi,\beta+1}(L_x^2)}\left\|u(z) \right\|_{L^{\infty}_{\xi,\beta+1}(L_x^2)},\\
\left\|(\partial^k_z\Gamma^z)(h,u) \right\|_{L^{\infty}_{\xi,\beta}(L_x^{2})}&\le C^*_\beta  \left\|h(z) \right\|_{L^{\infty}_{\xi,\beta+1}(L_x^{\infty})}\left\|u(z)\right\|_{L^{\infty}_{\xi,\beta+1}(L_x^{2})},\\
\left\|(\partial^k_z\Gamma^z)(h,u) \right\|_{L^{\infty}_{\xi,\beta}(L_x^{\infty})}&\le C^*_\beta \left\|h(z)\right\|_{L^{\infty}_{\xi,\beta+1}(L_x^{\infty})}\left\|u(z)\right\|_{L^{\infty}_{\xi,\beta+1}(L_x^{\infty})}.
\end{aligned}
\end{equation}
%Here, $C^*_\beta$ is determined by $C_{b_1}$, $C_{b_2}$ and $C_{b_*}$ in condition (\ref{b1}) and (\ref{b2}), and independent of $z$.

$(\mathrm{II})$ For $k\ge 0$, $\mathsf{L}^z$ and $\Gamma^z$ satisfy that
$$
\mathsf{P}_0(\partial^k_z\mathsf{L}^z)=(\partial^k_z\mathsf{L}^z)\mathsf{P}_0=0, \mathsf{P}_0(\partial^k_z\Gamma^z)(h,u)=0.
$$
\end{lemma}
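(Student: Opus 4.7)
My plan is to reduce each bound in Part (I) to the corresponding bound in Lemma \ref{Gamma}, exploiting the fact that the only $z$-dependence in $\mathsf{L}^z$ and $\Gamma^z$ sits inside the angular kernel $b$, and that $b$ together with its first $\alpha$ $z$-derivatives enjoys uniform-in-$z$ bounds through \eqref{b1}--\eqref{b2}.

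For Part (I), the first observation is that \eqref{Q5} and \eqref{Q6} show that $\partial^k_z \mathsf{L}^z$ and $\partial^k_z \Gamma^z$ have exactly the same algebraic structure as $\mathsf{L}$ and $\Gamma$, with the angular factor $b$ simply replaced by $\partial^k_z b(\cdot,z)$. An inspection of the proof of Lemma \ref{Gamma} in \cite{liu2004green} shows that its constants depend on the angular kernel only through the scalar quantity $\int_0^{\pi/2} b(\cos\theta)\sin\theta\,d\theta$ and a pointwise magnitude bound on $b$. For $k=0$ I would control this quantity by $C_{b_2}$ via \eqref{b1}; for $1\le k\le \alpha$, the pointwise bound \eqref{b2} gives
\begin{equation*}
\int_0^{\pi/2} |\partial^k_z b(\cos\theta, z)|\sin\theta\,d\theta \le C_{b_*},
\end{equation*}
uniformly in $z$. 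Substituting these uniform-in-$z$ bounds into the proof of Lemma \ref{Gamma} then yields \eqref{Lz} and \eqref{gamma} with a constant $C^*_\beta$ depending only on $\beta$, $C_{b_1}$, $C_{b_2}$ and $C_{b_*}$, hence independent of $z$.

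For Part (II) the argument is purely structural and requires no new quantitative estimate. The macro--micro projection $\mathsf{P}_0$ is built from $\chi_0,\ldots,\chi_4$, which encode the collision invariants (mass, momentum, energy). These invariants are preserved by \emph{any} binary collision kernel, so the identities $\mathsf{L}^z\chi_j = 0$, $(\mathsf{L}^z h,\chi_j)_\xi = 0$, and $(\Gamma^z(h,u),\chi_j)_\xi = 0$ all hold for every $z$. Differentiating each identity $k$ times in $z$, with the differentiation under the $\xi_*$--$\Omega$ integral justified by the uniform dominating bound \eqref{b2} and the Gaussian factor $\sqrt{\mathsf{M}_*}$, transfers the derivatives onto $b$, giving exactly $(\partial^k_z\mathsf{L}^z)\mathsf{P}_0=0$, $\mathsf{P}_0(\partial^k_z\mathsf{L}^z)=0$, and $\mathsf{P}_0(\partial^k_z\Gamma^z)(h,u)=0$.

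The principal and essentially only obstacle is bookkeeping: one must check that the proof of Lemma \ref{Gamma} really does depend on $b$ only through the integral and pointwise bounds mentioned, so that the substitutions go through with $z$-independent constants; and one must verify dominated-convergence hypotheses for commuting $\partial_z^k$ with the collision integrals. Both checks are straightforward given \eqref{b1}--\eqref{b2}, which is why the authors mark the detailed proof as omitted.
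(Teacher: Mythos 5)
Your proposal is correct and follows essentially the same route as the paper, which likewise reduces Part (I) to Lemma \ref{Gamma} using the structural formulas \eqref{Q5}--\eqref{Q6} together with the uniform-in-$z$ bounds \eqref{b1}--\eqref{b2}, and obtains Part (II) from the fact that the kernel of $\mathsf{L}^z$ (equivalently, the collision invariants defining $\mathsf{P}_0$) depends only on $\mathsf{M}$ and not on $z$, so the identities may be differentiated in $z$. The paper omits the details, and your sketch supplies exactly the intended argument.
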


%\subsection{The Green's function}
\subsection{The Green's function}
Next, we introduce the Green's function approach for the initial value problem of the Boltzmann equation in the whole space. 
Consider the initial value problem of the linearized Boltzmann equation:
\begin{equation}\label{LIP}
 \begin{cases}
\partial_tg(x,t,\xi)+\xi\cdot \nabla_xg(x,t,\xi)=\mathsf{L}g(x,t,\xi),\ x\in \mathbb{R}^3,\\
g(x,0,\xi)=g_0(x,\xi).\\
\end{cases}
\end{equation}
The Green's function ${G}(x,t,\xi;\xi_0)$ of problem (\ref{LIP}) satisfies 
\begin{equation*}\label{3DG}
 \begin{cases}
\partial_t{G}+\xi \cdot \nabla_x{G}=\mathsf{L}{G},\ x\in \mathbb{R}^3, \\
{G}(x,0,\xi;\xi_0)=\delta^{(3)}(x)\delta^{(3)}(\xi-\xi_0).
\end{cases}
\end{equation*}

The Green's function $G$ is the kernel function of the solution operator $\mathbb{G}^t$ for equation (\ref{LIP}).
The detailed structure of the Green's function can be found in Theorem 7.13 of \cite{liu2011solving}.  
The following theorem provides arbitrary $L^r_xL^\infty_{\xi,\beta}$ estimates for a given $L^p_xL^\infty_{\xi,\beta}$ initial data. These results can be obtained by Lemma \ref{K}, Young's inequality, Sobolev embedding and Ukai's bootstrap argument, based on the conclusions of Theorem 7.13 in \cite{liu2011solving}. We omit the proof here, and the interested readers are referred to \cite{W1,liu2011solving,krm} for some details.
\begin{theorem}\label{UIf}
The solution operator $\mathbb{G}^t$ of equation (\ref{LIP}) can be decomposed into the following form:
\begin{equation*}\label{G}
\begin{aligned}
\mathbb{G}^t=\mathbb{G}_F^t+\mathbb{G}_K^t+\mathbb{G}_R^t.
\end{aligned}
\end{equation*}
Here, $\mathbb{G}^t_{F}$ is the fluid-like part, $\mathbb{G}^t_{K}$ is the particle-like part, and $\mathbb{G}^t_{R}$ is the remainder part. For the initial value $g_0$ in (\ref{LIP}), if $\beta > 3/2,$ and $1\le p\le r \le \infty$, then it holds that
\begin{equation*}
\begin{aligned}
\left\|\mathbb{G}_F^tg_0+\mathbb{G}_R^tg_0\right\|_{L^{\infty}_{\xi,\beta}(L_x^{r})}&\lesssim\frac{1}{(1+t)^{\frac32(\frac{1}{p}-\frac{1}{r})}}\left\|g_0\right\|_{L^{\infty}_{\xi,\beta}(L_x^{p})},\\
\left\|\mathbb{G}_K^tg_0\right\|_{L^{\infty}_{\xi,\beta}(L_x^{r})}&\lesssim e^{-t/C}\left\|g_0\right\|_{L^{\infty}_{\xi,\beta}(L_x^{r})}, \ \text{for}\ \text{some}\ C>0.\\
%&\left\|\mathbb{G}_R^tf_0\right\|_{L^{\infty}_{\xi,\beta}(L_x^{r})}\lesssim\frac{1}{(1+t)^{\frac32(\frac{1}{p}-\frac{1}{r})+\frac{1}{2}}}\left\|f_0\right\|_{L^{\infty}_{\xi,\beta}(L_x^{p})}.
\end{aligned}
\end{equation*}
If $\beta> 3/2,$ and $1\le p\le 2\le r \le \infty$, then $\mathbb{G}^t$ satisfies that
\begin{equation*}
\begin{aligned}
\left\|\mathbb{G}^tg_0\right\|_{L_x^r(L^{2}_{\xi})}\lesssim\frac{1}{(1+t)^{\frac32(\frac{1}{p}-\frac{1}{r})}}\left\|g_0\right\|_{L_x^p(L^2_{\xi})}
+e^{-t/C}(\left\|g_0\right\|_{L^{2}_{\xi}(L_x^2)}+\left\|g_0\right\|_{L^{2}_{\xi}(L_x^2)}^{2/r}\left\|g_0\right\|_{L^{2}_{\xi}(L_x^{\infty})}^{1-2/r}),
\end{aligned}
\end{equation*}
\begin{equation*}
\begin{aligned}
\left\|\mathbb{G}^tg_0\right\|_{L^{\infty}_{\xi,\beta}(L_x^{r})}&\lesssim\frac{1}{(1+t)^{\frac32(\frac{1}{p}-\frac{1}{r})}}\left\|g_0\right\|_{L_x^p(L^2_{\xi})}+e^{-t/C}(\left\|g_0\right\|_{L^{\infty}_{\xi,\beta}(L_x^{r})}+\left\|g_0\right\|_{L^{2}_{\xi}(L_x^2)}+\left\|g_0\right\|_{L^{2}_{\xi}(L_x^2)}^{2/r}\left\|g_0\right\|_{L^{2}_{\xi}(L_x^{\infty})}^{1-2/r}).
\end{aligned}
\end{equation*}
 Moreover, if $\mathsf{P}_0g_0=0$, then we will get extra $(1+t)^{-1/2}$ decay rate for each estimate above.
\end{theorem}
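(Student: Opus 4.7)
The plan is to reduce the theorem to the pointwise structural bounds on the Green's function proved in Theorem 7.13 of \cite{liu2011solving}, and then lift those bounds first to Lebesgue norms in $x$ via Young's convolution inequality, and then from $L^2_\xi$ to the weighted $L^\infty_{\xi,\beta}$ norm via Ukai's bootstrap based on Lemma \ref{K}. The decomposition $\mathbb{G}^t = \mathbb{G}^t_F + \mathbb{G}^t_K + \mathbb{G}^t_R$ is taken directly from the long-wave/short-wave analysis of the linearized semigroup in \cite{liu2011solving}: $\mathbb{G}^t_F$ corresponds to the five hydrodynamic modes in the low-frequency regime, $\mathbb{G}^t_K$ to the transport part damped by $e^{-\nu(\xi) t}$, and $\mathbb{G}^t_R$ to the smoothing remainder.

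For $\mathbb{G}^t_F + \mathbb{G}^t_R$, I would start from the pointwise bounds on the Green's kernel obtained in Theorem 7.13, which show that for each $(\xi,\xi_0)$ the kernel behaves in $x$ like a heat-kernel-type convolution operator; after integrating out the $\xi$-dependence against the weight $\langle \xi\rangle^\beta$ this yields $\|G_F(\cdot,t,\xi;\xi_0)+G_R(\cdot,t,\xi;\xi_0)\|_{L^q_x}\lesssim (1+t)^{-\frac{3}{2}(1-\frac{1}{q})}$ uniformly in $(\xi,\xi_0)$. Young's convolution inequality in $x$ with exponents $1+\frac{1}{r}=\frac{1}{p}+\frac{1}{q}$ then gives the desired $L^p_x\to L^r_x$ decay $(1+t)^{-\frac{3}{2}(\frac{1}{p}-\frac{1}{r})}$ for initial data in $L^\infty_{\xi,\beta}(L^p_x)$. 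The microscopic gain of $(1+t)^{-1/2}$ when $\mathsf{P}_0 g_0=0$ comes from the observation that, in Fourier space, $\mathbb{G}^t_F$ acts on the microscopic subspace with an extra factor of $ik$, so one effectively convolves against $\partial_x G_F$, picking up an additional $(1+t)^{-1/2}$ through the same Young estimate. For the particle-like part $\mathbb{G}^t_K$, the explicit exponential damping $e^{-\nu(\xi)t}$ in the transport representation gives the $e^{-t/C}$ factor without changing the $L^r_x$ norm.

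To lift from the $L^2_\xi$ bounds provided by Theorem 7.13 to the weighted $L^\infty_{\xi,\beta}$ norms needed here, I would apply Ukai's bootstrap. Writing $\mathsf{L}=-\nu+\mathsf{K}$ and decomposing along damped free transport $S^t g_0(x,\xi)=e^{-\nu(\xi) t}g_0(x-\xi t,\xi)$, Duhamel's formula gives
$$
\mathbb{G}^t g_0 = S^t g_0 + \int_0^t S^{t-s}\,\mathsf{K}\,\mathbb{G}^s g_0\,ds.
$$
The first term is controlled in $L^\infty_{\xi,\beta}(L^r_x)$ by $e^{-\nu_0 t}\|g_0\|_{L^\infty_{\xi,\beta}(L^r_x)}$. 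For the integral, I would iterate this identity a finite number of times: by Lemma \ref{K}, each internal application of $\mathsf{K}$ trades an $L^2_\xi$ norm for an $L^\infty_{\xi,\beta+1}$ norm, and after enough iterations (three suffices in three space dimensions, by Sobolev embedding used to control intermediate $L^\infty_x$ factors by $L^r_x(L^2_\xi)$ norms of $\mathbb{G}^s g_0$), the remaining term involves $\mathbb{G}^s g_0$ only through its already-controlled $L^p_x(L^2_\xi)$ decay plus the particle-like exponential piece.

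The main obstacle is the bookkeeping of the velocity weight and of the spatial integrability indices through the bootstrap: each pass through $\mathsf{K}$ raises $\beta$ by one and each pass through $S^{t-s}$ preserves it, while each pass through $S^{t-s}$ composed with convolution against the fluid kernel lowers the spatial integrability by the Young exponent relation. The scheme has to be arranged so that after the allowed number of iterations the spatial norm one needs on $\mathbb{G}^s g_0$ is the $L^2$ norm already estimated, and the weight is still at or below $\beta$. The Sobolev embedding $W^{s,2}\hookrightarrow L^\infty$ is what permits a finite number of iterations to suffice, and the spectral-gap estimate \eqref{SGE} together with the exponential damping from $\nu(\xi)$ on short-wave modes ensures that the remainder of the Duhamel expansion decays at least as fast as $\mathbb{G}^t_K$. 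Once these bookkeeping issues are resolved, combining all three pieces produces the mixed $L^2_\xi$/$L^\infty_{\xi,\beta}$ estimates stated in the theorem, with the extra $(1+t)^{-1/2}$ factor for microscopic initial data following from the corresponding property of $\mathbb{G}^t_F$ established in the first step.
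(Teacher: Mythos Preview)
Your proposal is correct and matches the paper's own approach: the paper omits the proof of this theorem but explicitly lists precisely the ingredients you use---Theorem 7.13 of \cite{liu2011solving} for the pointwise structure of the Green's function, Young's inequality for the $L^p_x\to L^r_x$ decay, Lemma \ref{K} and Ukai's bootstrap for the passage from $L^2_\xi$ to $L^\infty_{\xi,\beta}$, and Sobolev embedding---referring the reader to \cite{W1,liu2011solving,krm} for details. Your sketch fills in the logical flow in the intended way.
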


\begin{remark}\label{UIf1}
The fluid-like part $\mathbb{G}^t_{F}$ governs the large-time behavior of the solution. $\mathbb{G}^t_{R}$ shares a similar structure
with $\mathbb{G}^t_{F}$ but decays more rapidly over time, and for ease of use, Theorem \ref{UIf} does not single out the properties of $\mathbb{G}^t_{R}$
separately. $\mathbb{G}^t_{K}$ mainly includes the singularity of the solution,
and it decays exponentially with respect to $t$. 
\end{remark}
By choosing appropriate $r$ and $p$ in Theorem \ref{UIf}, classical conclusions can be inferred:
\begin{theorem}[\cite{shu,[LiuYu1],liu2011solving}]\label{CLIP} Let $g$ be the solution of (\ref{LIP}) with the initial data $g_0$ and $\beta > 3/2$. Then  
\begin{equation*}
\begin{aligned}
\left\|g(t)\right\|_{L^{\infty}_{\xi,\beta}(L_x^2)}\lesssim\frac{1}{(1+t)^{\frac34}}(\left\|g_0\right\|_{L^{\infty}_{\xi,\beta}(L_x^1)}+\left\|g_0\right\|_{L^{\infty}_{\xi,\beta}(L_x^{2})}),
\end{aligned}
\end{equation*}
\begin{equation*}
\begin{aligned}
\left\|g(t)\right\|_{L^{\infty}_{\xi,\beta}(L_x^{\infty})}\lesssim\frac{1}{(1+t)^{\frac32}}(\left\|g_0\right\|_{L^{\infty}_{\xi,\beta}(L_x^1)}+\left\|g_0\right\|_{L^{\infty}_{\xi,\beta}(L_x^{\infty})}),
\end{aligned}
\end{equation*}
for $g_0\in L^{\infty}_{\xi,\beta}(L_x^1\cap L^{\infty}_x)$. And 
\begin{equation*}
\begin{aligned}
\left\|g(t)\right\|_{L^{\infty}_{\xi,\beta}(L_x^2)}\lesssim\left\|g_0\right\|_{L^{\infty}_{\xi,\beta}(L_x^2)},
\end{aligned}
\end{equation*}
\begin{equation*}
\begin{aligned}
\left\|g(t)\right\|_{L^{\infty}_{\xi,\beta}(L_x^{\infty})}\lesssim\frac{1}{(1+t)^{\frac34}}(\left\|g_0\right\|_{L^{\infty}_{\xi,\beta}(L_x^2)}+\left\|g_0\right\|_{L^{\infty}_{\xi,\beta}(L_x^{\infty})}),
\end{aligned}
\end{equation*}
for $g_0\in L^{\infty}_{\xi,\beta}(L_x^2\cap L^{\infty}_x)$. Moreover, if $\mathsf{P}_0g_0=0$, then we will get extra $(1+t)^{-1/2}$ decay rate for each estimate above.
\end{theorem}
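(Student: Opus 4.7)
The plan is to obtain all four decay estimates as direct corollaries of Theorem~\ref{UIf}, by specializing the $L_x^p$–$L_x^r$ bounds to appropriate pairs $(p,r)$ and then combining the fluid/remainder contribution with the exponentially-decaying particle-like part. Concretely, I would write $g(t) = \mathbb{G}^t g_0 = \mathbb{G}_F^t g_0 + \mathbb{G}_K^t g_0 + \mathbb{G}_R^t g_0$ and handle the three pieces separately.

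For the first estimate (data in $L_x^1 \cap L_x^2$), choosing $p=1$, $r=2$ in Theorem~\ref{UIf} yields $\|(\mathbb{G}_F^t + \mathbb{G}_R^t) g_0\|_{L^\infty_{\xi,\beta}(L_x^2)} \lesssim (1+t)^{-3/4}\|g_0\|_{L^\infty_{\xi,\beta}(L_x^1)}$, while the particle-like bound gives $\|\mathbb{G}_K^t g_0\|_{L^\infty_{\xi,\beta}(L_x^2)} \lesssim e^{-t/C}\|g_0\|_{L^\infty_{\xi,\beta}(L_x^2)} \lesssim (1+t)^{-3/4}\|g_0\|_{L^\infty_{\xi,\beta}(L_x^2)}$ since the exponential decay dominates any polynomial rate. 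The second estimate follows identically with $(p,r)=(1,\infty)$, producing exponent $\tfrac{3}{2}(1-0)=\tfrac{3}{2}$ on the fluid/remainder side. The third uses $(p,r)=(2,2)$, giving a uniform bound on $\mathbb{G}_F^t + \mathbb{G}_R^t$, with $\mathbb{G}_K^t$ absorbed as before. The fourth uses $(p,r)=(2,\infty)$, giving exponent $\tfrac{3}{2}\cdot\tfrac{1}{2} = \tfrac{3}{4}$ on the fluid/remainder and exponential decay on the particle-like part controlled by the $L_x^\infty$ norm of $g_0$. The weighted velocity norm $L^\infty_{\xi,\beta}$ with $\beta > 3/2$ simply passes through each sub-operator in Theorem~\ref{UIf}, so no extra work is needed on the $\xi$-side; the reason both an $L_x^p$ and an $L_x^r$ norm of $g_0$ appear on the right-hand side of each stated bound is precisely that $\mathbb{G}_K^t$ is controlled in the target $L_x^r$ norm rather than in the lower $L_x^p$ norm.

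The ``moreover'' clause is immediate from the corresponding sentence in Theorem~\ref{UIf}: when $\mathsf{P}_0 g_0 = 0$, every $L_x^p$–$L_x^r$ bound on the fluid/remainder part gains an extra factor $(1+t)^{-1/2}$, and this factor propagates verbatim through the four estimates, since the particle-like exponential decay dominates any additional polynomial factor anyway. There is no real obstacle here — Theorem~\ref{UIf} has already encoded the delicate Green's function analysis. The genuine technical difficulty lies in proving Theorem~\ref{UIf} itself (extracting $L_x^p$–$L_x^r$ decay from the fine structure of $G(x,t,\xi;\xi_0)$ via Young's inequality, Sobolev embedding, Ukai's bootstrap, and the compactness properties of $\mathsf{K}$ from Lemma~\ref{K}), but this is cited to \cite{W1,liu2011solving,krm} and taken as given.
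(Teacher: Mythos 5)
Your proposal is correct and coincides with the paper's own treatment: the paper proves Theorem \ref{CLIP} precisely by specializing Theorem \ref{UIf} to suitable pairs $(p,r)$ (namely $(1,2)$, $(1,\infty)$, $(2,2)$, $(2,\infty)$), absorbing the exponentially decaying particle-like part $\mathbb{G}_K^t$ into the stated polynomial rates, and carrying the extra $(1+t)^{-1/2}$ from the $\mathsf{P}_0g_0=0$ clause of Theorem \ref{UIf}. Nothing further is needed.
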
 

In the estimates of higher-order derivatives and the resolution of nonlinear problems, to ensure that the solution's $\xi$ weight matches that of the initial data, we also need the following lemma. This lemma can be obtained base on the detailed structure of the Green's function, and we omit the proof here.
\begin{lemma}\label{GTS}
Suppose $S=S(x,t,\xi)$.
The operators $\mathbb{G}_F^t+\mathbb{G}_R^t$ and $\mathbb{G}_K^t$ satisfy that for $1\le p\le r$, $\beta>3/2$,  
\begin{equation*}
\begin{aligned}
\left\| \int_0^t(\mathbb{G}_F^{t-\tau}+\mathbb{G}_R^{t-\tau})S(\tau)d\tau \right\|_{L^{\infty}_{\xi,\beta+1}(L_x^r)}\lesssim\int_0^t\frac{1}{(1+t-\tau)^{\frac32(\frac{1}{p}-\frac{1}{r})}}\left\|  S(\tau) \right\|_{L^\infty_{\xi,\beta}(L_x^p)}d\tau,
\end{aligned}
\end{equation*}
and
\begin{equation*}
\begin{aligned}
\left\| \int_0^t\mathbb{G}_K^{t-\tau}S(\tau)d\tau \right\|_{L^{\infty}_{\xi,\beta+1}(L_x^r)}\lesssim\int_0^te^{-\nu_0(t-\tau)}\left\|  S(\tau) \right\|_{L^\infty_{\xi,\beta}(L_x^r)}d\tau.
\end{aligned}
\end{equation*}
The solution operator $\mathbb{G}^t$ satisfies that for $\beta>3/2$, 
\begin{equation*}
\begin{aligned}
\left\|  \int_0^t\mathbb{G}^{t-\tau}S(\tau)d\tau \right\|_{L^{\infty}_{\xi,\beta+1}(L_x^2)}\lesssim&\int_0^t \frac{1}{(1+t-\tau)^{\frac34}}\left\|  S(\tau) \right\|_{L^{\infty}_{\xi,\beta}(L_x^1)}\\
&+e^{-\nu_0(t-\tau)} (\left\|  S(\tau) \right\|_{L^{\infty}_{\xi,\beta}(L_x^2)}+\left\|  S(\tau) \right\|_{L^{2}_{\xi}(L_x^2)})d\tau,
\end{aligned}
\end{equation*}
and
\begin{equation*}
\begin{aligned}
\left\|  \int_0^t\mathbb{G}^{t-\tau}S(\tau)d\tau \right\|_{L^{\infty}_{\xi,\beta+1}(L_x^{\infty})}\lesssim&\int_0^t \frac{1}{(1+t-\tau)^{\frac32}}\left\|  S(\tau) \right\|_{L^{\infty}_{\xi,\beta}(L_x^1)}\\
&+e^{-\nu_0(t-\tau)} (\left\|  S(\tau) \right\|_{L^{\infty}_{\xi,\beta}(L_x^{\infty})}+\left\|  S(\tau) \right\|_{L^{2}_{\xi}(L_x^2)}+\left\|  S(\tau) \right\|_{L^{2}_{\xi}(L_x^{\infty})})d\tau.
\end{aligned}
\end{equation*}
 Moreover, if $\mathsf{P}_0S(x,t,\xi)=0$, then we will get extra $(1+t-\tau)^{-1/2}$ decay rate in each time integral above.
\end{lemma}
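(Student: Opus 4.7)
The plan is to write $\int_0^t \mathbb{G}^{t-\tau}S(\tau)\,d\tau$ using the decomposition $\mathbb{G}^{t-\tau} = \mathbb{G}_F^{t-\tau} + \mathbb{G}_R^{t-\tau} + \mathbb{G}_K^{t-\tau}$ supplied by Theorem \ref{UIf}, bring the norm inside the $\tau$-integral by Minkowski, and bound each piece pointwise in $\tau$ by applying Theorem \ref{UIf} with $S(\tau)$ playing the role of an ``initial datum''. The only new content relative to Theorem \ref{UIf} is the one-unit upgrade of the $\xi$-weight from $\beta$ on the source to $\beta+1$ on the output, and this upgrade arises from two distinct mechanisms, one for the fluid-like block and one for the particle-like block.

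For $\mathbb{G}_F^{t-\tau}+\mathbb{G}_R^{t-\tau}$, the explicit representation from Theorem~7.13 of \cite{liu2011solving} is schematically $\sum_{j,k}\chi_j(\xi)\,G^F_{jk}(x-y,t-\tau)\,\bigl(\,\cdot\,,\chi_k\bigr)_\xi$: first project the input onto the macroscopic modes $\{\chi_k\}$, then convolve in $x$ against a fluid-type Green's function, then multiply back by the Gaussian-decaying basis functions $\chi_j(\xi)$. The output Maxwellian factor $\chi_j(\xi)$ absorbs any polynomial $\xi$-weight, so the left-hand side can be placed in $L^\infty_{\xi,\beta+1}$ while only the $L^p_x$ size of the macroscopic moments of $S(\tau)$ enters the right-hand side; combined with the standard heat decay $(1+t-\tau)^{-3(1/p-1/r)/2}$ of the fluid Green's function, this yields the first inequality, and the special choices $p=1$ with $r=2$ or $r=\infty$ plus Minkowski reproduce the fluid contribution to the combined $\mathbb{G}^t$ bounds. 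For $\mathbb{G}_K^{t-\tau}$, which is built in the Caflisch--Liu--Yu scheme from a finite Duhamel iteration of the damped transport semigroup $e^{-(t-\tau)(\xi\cdot\nabla_x+\nu(\xi))}$ against $\mathsf{K}$, every branch carries the factor $e^{-(t-\tau)\nu(\xi)}$ which, thanks to $\nu(\xi)\ge \nu_0$, already gives the $e^{-\nu_0(t-\tau)}$ rate; the one-unit $\xi$-weight gain then comes from the smoothing bound of Lemma \ref{K}, namely $\|\mathsf{K}g\|_{L^\infty_{\xi,\beta+1}}\lesssim\|g\|_{L^\infty_{\xi,\beta}}$, applied to each interior $\mathsf{K}$-factor in the iteration (the base free-transport term is absorbed into $\mathbb{G}_F+\mathbb{G}_R$). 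The mixed norms $L^2_\xi(L_x^2)$ and $L^2_\xi(L_x^\infty)$ that appear in the combined $\mathbb{G}^t$ estimates are inherited directly from the Ukai bootstrap version of the $\mathbb{G}_K$ bound already recorded in Theorem \ref{UIf}.

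Finally, the case $\mathsf{P}_0 S(\tau)\equiv 0$ triggers, at each time slice, the extra $(1+t-\tau)^{-1/2}$ decay on the fluid-like block that is already stated at the end of Theorem \ref{UIf}; because $\mathsf{P}_0$ is a bounded projection on all of the weighted norms in play, this factor passes unchanged through Minkowski's inequality and produces the last assertion. The main obstacle in the argument is the bookkeeping inside $\mathbb{G}_K^{t-\tau}$: one must check, uniformly in the truncation index of the Duhamel iteration, that combining the transport damping $e^{-(t-\tau)\nu(\xi)}$ with the smoothing of $\mathsf{K}$ yields the full one-unit $\xi$-gain together with a uniform $\nu_0$-rate, rather than a weaker gain or a blow-up in the number of iterations; this is precisely where Lemma \ref{K} and the spectral gap $\nu_0>0$ enter in tandem.
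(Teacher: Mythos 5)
The paper itself omits the proof of this lemma (it only points to the detailed structure of the Green's function in \cite{liu2011solving}), so there is no written argument to compare against; judged on its own, your outline of the fluid-like block is essentially the intended mechanism: the kernel of $\mathbb{G}_F^t$ (and similarly $\mathbb{G}_R^t$) carries Maxwellian-decaying factors in the outgoing velocity, so any polynomial weight $\langle\xi\rangle^{\beta+1}$ is absorbed on the output side while the input enters only through velocity integrals controlled by $\|S(\tau)\|_{L^\infty_{\xi,\beta}(L^p_x)}$, and Minkowski in $\tau$ then gives the first inequality and the fluid contribution to the combined $\mathbb{G}^t$ bounds, including the extra $(1+t-\tau)^{-1/2}$ when $\mathsf{P}_0S=0$.

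The gap is in your treatment of $\mathbb{G}_K^{t-\tau}$, precisely at the point you flag as the main obstacle. You assert that ``the base free-transport term is absorbed into $\mathbb{G}_F+\mathbb{G}_R$,'' but in the Liu--Yu decomposition the damped transport $\delta$-wave $e^{-\nu(\xi)(t-\tau)}\delta^{(3)}(x-y-\xi(t-\tau))$ is the \emph{leading term of} $\mathbb{G}_K$ (it is singular, so it cannot sit inside the smooth fluid-like or remainder parts). This term contains no factor of $\mathsf{K}$, so Lemma \ref{K} gives it no weight gain at all, and a pointwise-in-$\tau$ bound of the claimed form fails: $\sup_\xi\langle\xi\rangle e^{-\nu(\xi)s}$ is of order $s^{-1}$ as $s\to0$, not $O(e^{-\nu_0 s})$. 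For this zeroth term the one-unit gain must instead come from the time integration itself, via $\nu(\xi)\gtrsim\langle\xi\rangle$ and $\int_0^t e^{-\nu(\xi)(t-\tau)}\,d\tau\le\nu(\xi)^{-1}\lesssim\langle\xi\rangle^{-1}$, which naturally produces a bound in terms of $\sup_{0\le\tau\le t}\|S(\tau)\|_{L^\infty_{\xi,\beta}(L^r_x)}$ rather than the stated integrand $e^{-\nu_0(t-\tau)}\|S(\tau)\|_{L^\infty_{\xi,\beta}(L^r_x)}$; reconciling the two (harmless in the paper's applications, where $\|S(\tau)\|$ decays polynomially or exponentially in $\tau$, but not automatic for a rough, spiky-in-time source) is exactly the step your argument is missing. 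Your handling of the genuinely $\mathsf{K}$-bearing iterates of $\mathbb{G}_K$ via Lemma \ref{K} and $\nu(\xi)\ge\nu_0$ is fine, since the iteration is finite; the missing idea is the separate, time-integration-based weight gain for the no-collision term.
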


%\section{Linearized problem}
\section{The linearized problem}\label{Linearized problem}
In this section, we study the linearized Boltzmann equation with random uncertainties, originating from both the initial data and the collision kernels. Our focus is on analyzing the higher-order derivatives of the solution. Notably, while solutions in the whole space exhibit polynomial decay over time, those on the torus decay exponentially. The polynomial time decay of the solution presents significant challenges in analyzing higher-order derivatives.
%\subsection{Uncertainty of initial value}

\subsection{The solution operator for the linearized problem}
We first consider the solution operator $\mathbb{G}^t(z)$ for  problem (\ref{LUC}), i.e., the linearized Boltzmann equations with uncertainties.
 Under the influence of uncertainty, the semigroup $e^{\left( -i\xi\cdot \eta +\mathsf{L}\right) t}$ will transform into a new semigroup $e^{\left( -i\xi
\cdot \eta +\mathsf{L}^z\right) t}$. For any fixed uncertainty variable $z$, the Green's function method can still be applied to solve equation (\ref{LUC}). The impact of uncertainty on the large-time behavior is limited to a constant, determined by $C_{b_1}$ and $C_{b_2}$ in condition (\ref{b1}). Hence, when considering the large-time behavior of the solution, estimates can be obtained uniformly with respect to $z$. By Theorem \ref{UIf}, we have the following lemma:
\begin{lemma}\label{UCf}
$(\mathrm{I})$ Let $\mathbb{G}^t(z)$ be the solution operator for equation (\ref{LUC}). For given $C_{b_1}$ and $C_{b_2}$ in assumption (\ref{b1}), there exist positive constants $C_G$ and $C$ such that 
\begin{equation*}
\begin{aligned}
\left\|\mathbb{G}^t(z)g_0\right\|_{X^{\xi,2}_{x,r}}\le C_G\bigg[\frac{1}{(1+t)^{\frac32(\frac{1}{p}-\frac{1}{r})}}\left\|g_0\right\|_{X^{\xi,2}_{x,p}}
+e^{-t/C}(\left\|g_0\right\|_{X^{x,2}_{\xi,2}}+\left\|g_0\right\|_{X^{x,2}_{\xi,2}}^{2/r}\left\|g_0\right\|_{X^{x,\infty}_{\xi,2}}^{1-2/r})\bigg],
\end{aligned}
\end{equation*}
\begin{equation*}
\begin{aligned}
\left\|\mathbb{G}^t(z)g_0\right\|_{X^{x,r}_{\xi;\beta}}&\le C_G\bigg[\frac{1}{(1+t)^{\frac32(\frac{1}{p}-\frac{1}{r})}}\left\|g_0\right\|_{X^{\xi,2}_{x,p}}+e^{-t/C}(\left\|g_0\right\|_{X^{x,r}_{\xi;\beta}}+\left\|g_0\right\|_{X^{x,2}_{\xi,2}}+\left\|g_0\right\|_{X^{x,2}_{\xi,2}}^{2/r}\left\|g_0\right\|_{X^{x,\infty}_{\xi,2}}^{1-2/r})\bigg],
\end{aligned}
\end{equation*}
for $\beta\ge 0,2\le r\le \infty$ and $1\le p\le r$. Moreover, if $\mathsf{P}_0g_0=0$, then we will get extra $(1+t)^{-1/2}$ decay rate for each estimate above.

$(\mathrm{II})$ $\mathbb{G}^t(z)$ can be decomposed into three parts by
\begin{equation}\label{GC}
\begin{aligned}
\mathbb{G}^t(z)&=\mathbb{G}_F^t(z)+\mathbb{G}_K^t(z)+\mathbb{G}_R^t(z).
\end{aligned}
\end{equation}
Let
$$
\mathbb{G}_0^t(z):=\mathbb{G}_F^t(z)+\mathbb{G}_R^t(z).
$$
For any $1\le p\le r$, it holds that
\begin{equation*}
\begin{aligned}
\left\|\mathbb{G}_0^t(z)g_0\right\|_{X_{\xi;\beta}^{x,r}}\le\frac{C_G}{(1+t)^{\frac32(\frac{1}{p}-\frac{1}{r})}}\left\|g_0\right\|_{X_{\xi;\beta}^{x,p}},
\end{aligned}
\end{equation*}
and
\begin{equation*}
\begin{aligned}
\left\|\mathsf{P}_1\mathbb{G}_0^t(z)g_0\right\|_{X_{\xi;\beta}^{x,r}},\left\|\mathbb{G}_0^t(z)\mathsf{P}_1g_0\right\|_{X_{\xi;\beta}^{x,r}}\le\frac{C_G}{(1+t)^{\frac32(\frac{1}{p}-\frac{1}{r})+\frac12}}\left\|g_0\right\|_{X_{\xi;\beta}^{x,p}}.
\end{aligned}
\end{equation*}
For any $1\le r\le \infty$, it holds that
\begin{equation*}
\begin{aligned}
\left\|\mathbb{G}_K^t(z)g_0\right\|_{X_{\xi;\beta}^{x,r}}\le C_Ge^{-t/C}\left\|g_0\right\|_{X_{\xi;\beta}^{x,r}}.\\
\end{aligned}
\end{equation*}

In this lemma, all above constants are independent of $z$.
\end{lemma}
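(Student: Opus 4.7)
The plan is to apply Theorem \ref{UIf} pointwise in $z \in I_z$ and then take the essential supremum over $z$ to obtain the $X$-norm bounds. For any fixed $z$, the problem \eqref{LUC} is a standard linearized Boltzmann equation of the form \eqref{LIP}, with $\mathsf{L}$ replaced by $\mathsf{L}^z$. So the key point to verify is that the conclusions of Theorem \ref{UIf} hold with constants that depend only on the structural bounds $C_{b_1}, C_{b_2}$ from \eqref{b1}, and not on $z$ itself.

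First I would collect the $z$-uniform structural properties of $\mathsf{L}^z$. Writing $\mathsf{L}^z = -\nu^z + \mathsf{K}^z$, assumption \eqref{b1} gives $C_1(1+|\xi|) \le \nu^z(\xi) \le C_2(1+|\xi|)$ with constants depending only on $C_{b_1},C_{b_2}$, and an analogous uniform version of Lemma \ref{K} for $\mathsf{K}^z$. By Lemma \ref{bz}, the kernel of $\mathsf{L}^z$ coincides with that of $\mathsf{L}$ and is spanned by $\{\chi_0,\ldots,\chi_4\}$ (depending only on the background Maxwellian), so the projection $\mathsf{P}_0$ is the same for all $z$. Combined with a $z$-uniform spectral-gap estimate of the form \eqref{SGE}, this gives all the ingredients that enter the Green's function construction of Theorem 7.13 of \cite{liu2011solving}. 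Running that construction for each fixed $z$ yields the decomposition \eqref{GC}, with constants traceable back to the $z$-uniform spectral-gap rate and the $z$-uniform $\nu^z$, $\mathsf{K}^z$ bounds.

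For Part (I), taking $\text{ess sup}_{z\in I_z}$ of the $L_x^r(L_\xi^2)$ and $L_x^r(L_{\xi,\beta}^\infty)$ estimates in Theorem \ref{UIf} transfers them directly to the $X^{\xi,2}_{x,r}$ and $X^{x,r}_{\xi;\beta}$ norms. For Part (II), one extracts the pointwise-in-$z$ bounds on $\mathbb{G}_0^t(z) = \mathbb{G}_F^t(z)+\mathbb{G}_R^t(z)$ and $\mathbb{G}_K^t(z)$ from the same theorem and again passes to essential suprema. The extra $(1+t)^{-1/2}$ factor when $\mathsf{P}_1$ is inserted on either side follows because this factor is already present in Theorem \ref{UIf} (and Lemma \ref{GTS}) under the microscopic assumption $\mathsf{P}_0 g_0 = 0$, which is equivalent to acting with $\mathsf{P}_1$; since $\mathsf{P}_0$ is independent of $z$, the statement transfers to the $z$-uniform setting without change.

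The main obstacle is simply bookkeeping: confirming that no constant in the Green's function construction (spectral gap, Mixture Lemma bounds, Ukai's bootstrap, Sobolev embedding constants) depends on $z$ beyond through $C_{b_1},C_{b_2}$. Since every such constant can be traced to the uniform bounds on $\nu^z, \mathsf{K}^z$ and the $z$-independence of $\mathsf{P}_0$, one can define a single $C_G = C_G(C_{b_1}, C_{b_2}, \beta)$ majorizing all of them, at which point the lemma follows.
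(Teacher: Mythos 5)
Your proposal is correct and follows essentially the same route as the paper: the paper likewise obtains Lemma \ref{UCf} by applying the Green's function estimates of Theorem \ref{UIf} for each fixed $z$, observing that all constants depend on $z$ only through $C_{b_1},C_{b_2}$ (uniform bounds on $\nu^z,\mathsf{K}^z$, $z$-independent $\mathsf{P}_0$ and spectral gap), and then taking the essential supremum in $z$. The only point stated a bit loosely is the extra $(1+t)^{-1/2}$ decay for $\mathsf{P}_1\mathbb{G}_0^t(z)g_0$ (projection on the output side), which is not literally the "$\mathsf{P}_0g_0=0$" case of Theorem \ref{UIf} but rests on the structure of the fluid part of the Green's function (its leading eigenfunctions are macroscopic, $e_j(\eta)=E_j+O(|\eta|)$); the paper asserts this at the same level of detail, so your argument matches it.
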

So by Lemma \ref{UCf}, a precise description of the large-time behavior of solutions to the linearized Boltzmann equation with uncertainties have been obtained.

%\subsection{Time decay of derivatives of the solutions}
\subsection{Time decay of derivatives of the solutions}\label{the solutions}
The estimates of the solution's derivatives and higher-order derivatives with respect to $z$ are key ingredients in this paper, as they are crucial for ensuring the accuracy of numerical methods.

 First, we investigate the first-order derivative of the solution for the linearized problem (\ref{LUC}). Let $\partial_zg_0,g_0\in X_{\xi;\beta}^{x,1}\cap X_{\xi;\beta}^{x,\infty},\beta>3/2$. By differentiating the equation (\ref{LUC}) with respect to $z$, we obtain equation \eqref{UCD}.
By Duhamel's principle, the derivative $\partial_zg$ satisfies 
\begin{equation*}
\partial_zg(x,t,\xi,z)=\mathbb{G}^{t}(z)\partial_zg_0+\int_0^t\mathbb{G}^{t-\tau}(z)(\partial_z\mathsf{L}^z)g(\tau)d\tau.
\end{equation*}
From \eqref{Lz} in Lemma \ref{bz}, we can see that the weight of $\xi$ in the estimate of $(\partial_z\mathsf{L}^z)g$ mismatch the weight of $\xi$ in the estimate of $g$.
 To address this issue, Lemma \ref{GTS} is needed. $\mathbb{G}_F^t(z), \mathbb{G}_K^t(z), \mathbb{G}_R^t(z)$ and $\mathbb{G}^t(z)$ also satisfy the properties in Lemma \ref{GTS}.

Following the idea in Section \ref{Main result}, we provide the following lemma to obtain $L_x^2$ and $L_x^\infty$ estimates for $\partial_zg$. The decomposition estimate for the Green's function, along with the extra time decay by the microscopic projection operator $\mathsf{P}_1$, are key ingredient of the proof.
\begin{lemma}\label{TC1}
If $g_0,\partial_z g_0 \in X_{\xi;\beta}^{x,1}\cap X_{\xi;\beta}^{x,\infty}$, $\beta>\frac32$, then for given $C_{b_*}$ in assumption (\ref{b2}), there exists a positive constant $C_{G,1}$ independent of $z$ such that the solution $\partial_zg$ of equation (\ref{UCD}) satisfies the following estimates:
 %$L_z^{\infty}L^{\infty}_{\xi,\beta}(L_x^{1}\cap L_x^{\infty})$
\begin{equation*}
\begin{aligned}
\left\|\partial_zg(t)\right\|_{X_{\xi;\beta}^{x,2}}&\le \frac{C_{G,1}}{(1+t)^{\frac34}}(\left\|g_0\right\|_{X_{\xi;\beta}^{x,1}}+\left\|g_0\right\|_{X_{\xi;\beta}^{x,2}}+\left\|\partial_zg_0\right\|_{X_{\xi;\beta}^{x,1}}+\left\|\partial_zg_0\right\|_{X_{\xi;\beta}^{x,2}}),\\
\left\|\partial_zg(t)\right\|_{X_{\xi;\beta}^{x,\infty}}&\le \frac{C_{G,1}}{(1+t)^{\frac32}}(\left\|g_0\right\|_{X_{\xi;\beta}^{x,1}}+\left\|g_0\right\|_{X_{\xi;\beta}^{x,\infty}}+\left\|\partial_zg_0\right\|_{X_{\xi;\beta}^{x,1}}+\left\|\partial_zg_0\right\|_{X_{\xi;\beta}^{x,\infty}}).
\end{aligned}
\end{equation*}
\end{lemma}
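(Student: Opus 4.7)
The plan is to apply Duhamel's principle to equation~\eqref{UCD}, use Lemma~\ref{bz}(II) to factor the source $(\partial_z\mathsf{L}^z) g$ through microscopic projections on both sides, and then bookkeep the resulting two extra factors of $(1+\cdot)^{-1/2}$ together with the weight-gain in Lemma~\ref{GTS} to absorb the one-order weight loss from $(\partial_z\mathsf{L}^z)$. Duhamel gives
\begin{equation*}
\partial_z g(t) \;=\; \mathbb{G}^t(z)\partial_z g_0 \;+\; \int_0^t \mathbb{G}^{t-\tau}(z)\,\mathsf{P}_1(\partial_z\mathsf{L}^z)\mathsf{P}_1\,\mathbb{G}^\tau(z) g_0\,d\tau,
\end{equation*}
where inserting the projections is justified by $\mathsf{P}_0(\partial_z\mathsf{L}^z) = (\partial_z\mathsf{L}^z)\mathsf{P}_0 = 0$. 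The homogeneous term is controlled directly by Lemma~\ref{UCf}(I) applied to $\partial_z g_0$ with $(p,r)=(1,2)$ and $(p,r)=(1,\infty)$, which already produces the announced $(1+t)^{-3/4}$ and $(1+t)^{-3/2}$ rates (the exponential remainder being absorbed into the initial-data norm).

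For the Duhamel integral, I would decompose $\mathbb{G}^{t-\tau}(z) = \mathbb{G}_0^{t-\tau}(z) + \mathbb{G}_K^{t-\tau}(z)$ and split the $\tau$-integration at $\tau = t/2$. The inner factor $\mathsf{P}_1\mathbb{G}^\tau(z) g_0$ carries an extra $(1+\tau)^{-1/2}$ (Lemma~\ref{UCf}(II) for the fluid-like part, exponential decay for $\mathbb{G}_K$), and $\mathbb{G}_0^{t-\tau}(z)\mathsf{P}_1$ carries an extra $(1+t-\tau)^{-1/2}$. In the $L^2_x$ estimate, on $[0,t/2]$ I control the inner factor in $L^1_x$ at rate $(1+\tau)^{-1/2}$ and use $\mathbb{G}_0\mathsf{P}_1 : L^1_x\to L^2_x$ at rate $(1+t-\tau)^{-5/4}$; on $[t/2, t]$ I control the inner factor in $L^2_x$ at rate $(1+\tau)^{-5/4}$ and use $\mathbb{G}_0\mathsf{P}_1 : L^2_x\to L^2_x$ at rate $(1+t-\tau)^{-1/2}$. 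Each half integrates to $(1+t)^{-3/4}$. The $L^\infty_x$ bound follows the same split with the outer space replaced by $L^\infty_x$: on $[0,t/2]$ one uses $\mathbb{G}_0\mathsf{P}_1 : L^1_x\to L^\infty_x$ at rate $(1+t-\tau)^{-2}$, and on $[t/2,t]$ one uses $\mathsf{P}_1\mathbb{G}^\tau : L^1_x\to L^\infty_x$ at rate $(1+\tau)^{-2}$, each half integrating to $(1+t)^{-3/2}$. The weight loss from \eqref{Lz} and the particle-like contribution $\mathbb{G}_K$ are handled in both estimates by the microscopic-source clause of Lemma~\ref{GTS}, whose weight-gain exactly matches the weight loss in \eqref{Lz} and whose exponential factor absorbs $\mathbb{G}_K$.

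The main obstacle, as already flagged in Section~\ref{main objectives}, is that the naive Duhamel estimate--treating the source as a weight-$(\beta-1)$, $(1+\tau)^{-3/4}$-decaying quantity and applying $\mathbb{G}^{t-\tau}$ without exploiting the microscopic structure--produces a $(1+t)^{1/4}$ \emph{growth} in $L^2_x$. Recovering the stated decay requires both enhancements from the microscopic projections simultaneously, together with the $L^p_x$--$L^r_x$ bookkeeping of Lemma~\ref{UCf}(II) that concentrates one enhancement on each endpoint of the $\tau$-integral so that neither singularity dominates. This mechanism is exactly what is reused inductively in the higher-order derivative estimates of Theorem~\ref{T1}, and is the reason only a logarithmic factor $(\ln(1+t))^{k-1}$ accumulates at order $k$.
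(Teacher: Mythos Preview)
Your proposal is correct and follows essentially the same approach as the paper. The only organizational difference is that the paper decomposes \emph{both} the outer $\mathbb{G}^{t-\tau}$ and the inner $\mathbb{G}^{\tau}$ into $\mathbb{G}_0+\mathbb{G}_K$, producing four explicit pieces $g_{FF},g_{FK},g_{KF},g_{KK}$, while you decompose only the outer operator and treat $\mathsf{P}_1\mathbb{G}^{\tau}g_0$ as a single object with the combined $(1+\tau)^{-1/2}$/exponential decay; the $L^p_x$--$L^r_x$ split at $\tau=t/2$ and the two microscopic-projection gains are identical in both write-ups.
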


\begin{proof}
By Duhamel's principle,
\begin{equation*}
\partial_zg(x,t,\xi,z)=\mathbb{G}^{t}(z)\partial_zg_0+\int_0^t\mathbb{G}^{t-\tau}(z)(\partial_z\mathsf{L}^z)g(\tau)d\tau.
\end{equation*}
In view of the decompostion of solution operator $\mathbb{G}^t(z)$ in \eqref{GC}, the decompostion of $g(\tau)$ and $\mathsf{P}_1\partial_z\mathsf{L}^z\mathsf{P}_1=\partial_z\mathsf{L}^z$, we write
\begin{equation*}\label{FF1}
\int_0^t\mathbb{G}^{t-\tau}(z)(\partial_z\mathsf{L}^z)g(\tau)d\tau=g_{FF}+g_{FK}+g_{KF}+g_{KK},
\end{equation*}
where
%For instance, from the properties of Green's function, $f_{FF}$, $f_{FK}$, $f_{KF}$ and $f_{KK}$ can be expressed as
\begin{equation*}\label{FF2}
g_{FF}=\int_0^t\mathbb{G}_0^{t-\tau}\mathsf{P}_1[(\partial_z\mathsf{L}^z)\mathsf{P}_1\mathbb{G}_0^{\tau}g_0]d\tau,
\end{equation*}
\begin{equation*}
g_{FK}=\int_0^t\mathbb{G}_0^{t-\tau}\mathsf{P}_1[(\partial_z\mathsf{L}^z)\mathsf{P}_1\mathbb{G}_K^{\tau}g_0]d\tau,
\end{equation*}
\begin{equation*}
g_{KF}=\int_0^t\mathbb{G}_K^{t-\tau}\mathsf{P}_1[(\partial_z\mathsf{L}^z)\mathsf{P}_1\mathbb{G}_0^{\tau}g_0]d\tau,
\end{equation*}
\begin{equation*}
g_{KK}=\int_0^t\mathbb{G}_K^{t-\tau}\mathsf{P}_1[(\partial_z\mathsf{L}^z)\mathsf{P}_1\mathbb{G}_K^{\tau}g_0]d\tau.
\end{equation*}

Firstly, we consider $L^2_x$ estimate of $g_{FF}$. By Lemma \ref{GTS} and Lemma \ref{UCf}, we have 
\begin{equation*}
\begin{aligned}
&\left\|g_{FF}(t)\right\|_{X_{\xi;\beta}^{x,2}}\\
&=\left\|\int_0^t\mathbb{G}_0^{t-\tau}\mathsf{P}_1[(\partial_z\mathsf{L}^z)\mathsf{P}_1\mathbb{G}_0^{\tau}g_0]d\tau\right\|_{X_{\xi;\beta}^{x,2}}\\
&\le C_G\int_0^t(1+t-\tau)^{-\frac32(\frac{1}{r}-\frac{1}{2})-\frac12}\left\|(\partial_z\mathsf{L}^z)\mathsf{P}_1\mathbb{G}_0^{\tau}g_0 \right\|_{X_{\xi;\beta-1}^{x,r}}d\tau\\
&\le C_G^2C_\beta^*\int_0^t(1+t-\tau)^{-\frac32(\frac{1}{r}-\frac{1}{2})-\frac12}(1+\tau)^{-\frac32(1-\frac{1}{r})-\frac12} \left\| g_0\right\|_{X_{\xi;\beta}^{x,1}} d\tau.\\
\end{aligned}
\end{equation*}
Let $r=1$, and it follows
\begin{equation}\label{FF1}
\begin{aligned}
\int_0^{\frac{t}{2}}(1+t-\tau)^{-\frac32(\frac{1}{r}-\frac{1}{2})-\frac12}(1+\tau)^{-\frac32(1-\frac{1}{r})-\frac12} d\tau&=\int_0^{\frac{t}{2}}(1+t-\tau)^{-\frac{5}{4}}(1+\tau)^{-\frac12} d\tau \lesssim(1+t)^{-\frac34}.
\end{aligned}
\end{equation}
Let $r=2$, and it follows
\begin{equation}\label{FF2}
\begin{aligned}
\int^t_{\frac{t}{2}}(1+t-\tau)^{-\frac32(\frac{1}{r}-\frac{1}{2})-\frac12}(1+\tau)^{-\frac32(1-\frac{1}{r})-\frac12} d\tau&=\int^t_{\frac{t}{2}}(1+t-\tau)^{-\frac{1}{2}}(1+\tau)^{-\frac54} d\tau \lesssim(1+t)^{-\frac34}.
\end{aligned}
\end{equation}
Combining \eqref{FF1} and \eqref{FF2}, we have
\begin{equation}\label{TC11}
\left\|g_{FF}(t)\right\|_{X_{\xi;\beta}^{x,2}}\le CC_G^2C_\beta^*\frac{1}{(1+t)^{\frac34}}\left\| g_0\right\|_{X_{\xi;\beta}^{x,1}}.
\end{equation}

Then we consider $L^2_x$ estimate of $g_{FK}$. Note that the time decay of particle wave operator $\mathbb{G}_K^t$ is exponential. 
By Lemma \ref{GTS} and Lemma \ref{UCf}, it follows that
\begin{equation}\label{TC12}
\begin{aligned}
&\left\|g_{FK}(t)\right\|_{X_{\xi;\beta}^{x,2}}\\
&=\left\|\int_0^t\mathbb{G}_0^{t-\tau}\mathsf{P}_1[(\partial_z\mathsf{L}^z)\mathsf{P}_1\mathbb{G}_K^{\tau}g_0]d\tau\right\|_{X_{\xi;\beta}^{x,2}}\\
&\lesssim\int_0^t(1+t-\tau)^{-\frac32(\frac{1}{1}-\frac{1}{2})-\frac12}\left\|(\partial_z\mathsf{L}^z)\mathsf{P}_1\mathbb{G}_K^{\tau}g_0 \right\|_{X_{\xi;\beta-1}^{x,1}}d\tau\\
&\lesssim\int_0^t(1+t-\tau)^{-\frac32(1-\frac{1}{2})-\frac12}e^{-\tau/C} \left\|g_0\right\|_{X_{\xi;\beta}^{x,1}} d\tau\\
&\le C C_G^2C_\beta^*\frac{1}{(1+t)^{\frac54}}\left\| g_0\right\|_{X_{\xi;\beta}^{x,1}}.
\end{aligned}
\end{equation}

 For $L^2_x$ estimates of $g_{KK}$ and $g_{KF}$, it follows
\begin{equation}\label{TC13}
\begin{aligned}
&\left\|g_{KK}(t)\right\|_{X_{\xi;\beta}^{x,2}}\\
&=\left\|\int_0^t\mathbb{G}_K^{t-\tau}\mathsf{P}_1[(\partial_z\mathsf{L}^z)\mathsf{P}_1\mathbb{G}_K^{\tau}g_0]d\tau\right\|_{X_{\xi;\beta}^{x,2}}\\
&\lesssim\int_0^te^{-\frac{t-\tau}{C}}\left\|(\partial_z\mathsf{L}^z)\mathsf{P}_1\mathbb{G}_K^{\tau}g_0 \right\|_{X_{\xi;\beta-1}^{x,2}}d\tau\\
&\lesssim\int_0^t e^{-\frac{t-\tau}{C}}e^{-\tau/C}\left\|g_0\right\|_{X_{\xi;\beta}^{x,2}} d\tau\\
&\lesssim e^{-t/C}\left\|g_0\right\|_{X_{\xi;\beta}^{x,2}},
\end{aligned}
\end{equation}
and
\begin{equation}\label{TC14}
\begin{aligned}
&\left\|g_{KF}(t)\right\|_{X_{\xi;\beta}^{x,2}}\\
&=\left\|\int_0^t\mathbb{G}_K^{t-\tau}\mathsf{P}_1[(\partial_z\mathsf{L}^z)\mathsf{P}_1\mathbb{G}_0^{\tau}g_0]d\tau\right\|_{X_{\xi;\beta}^{x,2}}\\
&\lesssim\int_0^te^{-\frac{t-\tau}{C}}\left\|(\partial_z\mathsf{L}^z)\mathsf{P}_1\mathbb{G}_0^{\tau}g_0 \right\|_{X_{\xi;\beta-1}^{x,2}}d\tau\\
&\lesssim\int_0^t e^{-\frac{t-\tau}{C}}(1+\tau)^{-\frac34-\frac12}\left\|g_0\right\|_{X_{\xi;\beta}^{x,1}} d\tau\\
&\le C C_G^2C_\beta^*\frac{1}{(1+t)^{\frac54}}\left\| g_0\right\|_{X_{\xi;\beta}^{x,1}}.
\end{aligned}
\end{equation}
Combining \eqref{TC11}-\eqref{TC14}, we have
\begin{equation}\label{p1d}
\begin{aligned}
\left\|\int_0^t\mathbb{G}^{t-\tau}(z)(\partial_z\mathsf{L}^z)g(\tau)d\tau\right\|_{X_{\xi;\beta}^{x,2}}&\le \frac{C_{G,1}}{(1+t)^{\frac34}}(\left\| g_0\right\|_{X_{\xi;\beta}^{x,1}}+\left\| g_0\right\|_{X_{\xi;\beta}^{x,2}}).\\
%&\le \frac{C_{G,1}}{(1+t)^{\frac34}}(\left\| g_0\right\|_{X_{\xi;\beta}^{x,1}}+\left\| g_0\right\|_{X_{\xi;\beta}^{x,\infty}}).
\end{aligned}
\end{equation}
Combining \eqref{p1d} and the estimate of $\mathbb{G}^{t}(z)\partial_zg_0$, we have 
\begin{equation}\label{2TC1}
\left\|\partial_zg(t)\right\|_{X_{\xi;\beta}^{x,2}}\le \frac{C_{G,1}}{(1+t)^{\frac34}}(\left\|g_0\right\|_{X_{\xi;\beta}^{x,1}}+\left\|g_0\right\|_{X_{\xi;\beta}^{x,2}}+\left\|\partial_zg_0\right\|_{X_{\xi;\beta}^{x,1}}+\left\|\partial_zg_0\right\|_{X_{\xi;\beta}^{x,2}}).
\end{equation}

 As for the $X_{\xi;\beta}^{x,\infty}$ estimate, one needs to change the index in the above procedure of $g_{FF}$, $g_{KF}$ and $g_{FK}$ from $-\frac32(\frac{1}{r}-\frac{1}{2})-\frac12$ to $-\frac32(\frac{1}{r}-\frac{1}{\infty})-\frac12$, so we omit it. And for the $L_x^\infty$ estimates of $g_{KK}$, it follows 
\begin{equation*}\label{TC1KK}
\begin{aligned}
\left\|g_{KK}(t)\right\|_{X_{\xi;\beta}^{x,\infty}}\lesssim e^{-t/C}\left\|g_0\right\|_{X_{\xi;\beta}^{x,\infty}}.
\end{aligned}
\end{equation*}
Therefore, we have 
\begin{equation}\label{ITC1}
\left\|\partial_zg(t)\right\|_{X_{\xi;\beta}^{x,\infty}}\le \frac{C_{G,1}}{(1+t)^{\frac32}}(\left\|g_0\right\|_{X_{\xi;\beta}^{x,1}}+\left\|g_0\right\|_{X_{\xi;\beta}^{x,\infty}}+\left\|\partial_zg_0\right\|_{X_{\xi;\beta}^{x,1}}+\left\|\partial_zg_0\right\|_{X_{\xi;\beta}^{x,\infty}}).
\end{equation}

Combining \eqref{2TC1} and \eqref{ITC1}, we complete the proof.
\end{proof}
\begin{remark}
If we consider the initial value problem on $\mathbb{T}^3$, from \cite{ukai1974existence}, the solution of the initial value problem decays exponentially with respect to $t$. By following the above procedure, one can straightforwardly obtain the exponential decay of $\partial_zg$ with respect to $t$. Similarly, we can obtain the estimates of higher-order derivatives on the $\mathbb{T}^3$, and reach the same conclusion as in \cite{liu2018hypocoercivity}.
\end{remark}

%\subsection{Time decay of higher derivatives}
\subsection{Time decay of higher derivatives}\label{higher derivatives}
Next, we consider higher-order derivatives. 
With the results in Lemma \ref{UCf} and Lemma \ref{TC1}, we can give the following lemma for higher-order derivatives:
\begin{lemma}\label{TC2}
If $\partial^k_zg_0 \in X^{x,1}_{\xi;\beta} \cap X^{x,\infty}_{\xi;\beta}$, $\beta>\frac32$, %and $|\partial^k_zb(\eta,z)| \le C_{b_*}$, 
 for $0\le k \le \alpha$, then for given $\alpha\in \mathbb{N}_+$, there exists positive constants $C_{G,k}, 2\le k \le \alpha$ independent of $z$ such that the solution of equation (\ref{LUC}) satisfies the following estimates:
\begin{equation*}
\begin{aligned}
\left\|\partial^k_zg(t)\right\|_{X_{\xi;\beta}^{x,2}}&\le C_{G,k}\bigg[\frac{1}{(1+t)^{\frac34}}(\left\|\partial^k_zg_0\right\|_{X_{\xi;\beta}^{x,1}}+\left\|\partial^k_zg_0\right\|_{X_{\xi;\beta}^{x,2}})\\
&\ \ +\sum_{s=0}^{k-1}\frac{(\ln(1+t))^{k-1-s}}{(1+t)^{\frac34}}(\left\|\partial^s_zg_0\right\|_{X_{\xi;\beta}^{x,1}}+\left\|\partial^s_zg_0\right\|_{X_{\xi;\beta}^{x,2}})\bigg],\\
\left\|\partial^k_zg(t)\right\|_{X_{\xi;\beta}^{x,\infty}}&\le C_{G,k}\bigg[\frac{1}{(1+t)^{\frac32}}(\left\|\partial^k_zg_0\right\|_{X_{\xi;\beta}^{x,1}}+\left\|\partial^k_zg_0\right\|_{X_{\xi;\beta}^{x,\infty}})\\
&\ \ +\sum_{s=0}^{k-1}\frac{(\ln(1+t))^{k-1-s}}{(1+t)^{\frac32}}(\left\|\partial^s_zg_0\right\|_{X_{\xi;\beta}^{x,1}}+\left\|\partial^s_zg_0\right\|_{X_{\xi;\beta}^{x,\infty}})\bigg],\\
\end{aligned}
\end{equation*}
for $2\le k \le \alpha$.
\end{lemma}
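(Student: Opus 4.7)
My plan is to argue by induction on $k$; the base case $k=1$ is Lemma \ref{TC1}. For the inductive step, I would first differentiate equation \eqref{LUC} $k$ times in $z$ via the Leibniz rule to generate the source $\sum_{j=1}^{k}\binom{k}{j}(\partial^j_z\mathsf{L}^z)(\partial^{k-j}_z g)$, and then apply Duhamel's principle to obtain
\begin{equation*}
\partial^k_z g(t)=\mathbb{G}^t(z)\partial^k_z g_0+\sum_{j=1}^{k}\binom{k}{j}\int_0^t\mathbb{G}^{t-\tau}(z)(\partial^j_z\mathsf{L}^z)(\partial^{k-j}_z g)(\tau)\,d\tau.
\end{equation*}
The homogeneous term is directly controlled by Lemma \ref{UCf} and yields the $\partial^k_z g_0$ contribution of the claimed bound. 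For each inhomogeneous term, the decisive input from part (II) of Lemma \ref{bz} is the identity $(\partial^j_z\mathsf{L}^z)=\mathsf{P}_1(\partial^j_z\mathsf{L}^z)\mathsf{P}_1$, which makes the source microscopic on both sides so the adjacent Green's function picks up the extra $(1+t-\tau)^{-1/2}$ decay from Lemma \ref{UCf}(II), exactly as was exploited in Lemma \ref{TC1}.

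The key technical step is to iterate Duhamel's principle recursively on each inner factor $\partial^{k-j}_z g$ with $k-j\ge 1$, continuing until every remaining piece features only initial data $\partial^s_z g_0$ with some $0\le s\le k-1$ at the core. After at most $k$ such substitutions, $\partial^k_z g(t)$ is expressed as a finite sum of nested integrals of the form
\begin{equation*}
\int_{0<\tau_l<\cdots<\tau_1<t}\mathbb{G}^{t-\tau_1}_{\epsilon_0}\mathsf{P}_1(\partial^{j_1}_z\mathsf{L}^z)\mathsf{P}_1\mathbb{G}^{\tau_1-\tau_2}_{\epsilon_1}\mathsf{P}_1\cdots\mathsf{P}_1\mathbb{G}^{\tau_l}_{\epsilon_l}\partial^s_z g_0\,d\tau_1\cdots d\tau_l,
\end{equation*}
with $s+j_1+\cdots+j_l=k$, $1\le l\le k-s$, and each $\epsilon_i\in\{0,K\}$ labelling the fluid-like or particle-like factor in the decomposition $\mathbb{G}^t=\mathbb{G}^t_0+\mathbb{G}^t_K$ of Lemma \ref{UCf}(II). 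Any contribution containing at least one $\mathbb{G}^\cdot_K$ factor decays exponentially in the corresponding time gap and is handled routinely; the essential work concentrates on the all-fluid terms, where each $\mathsf{P}_1$-adjacent fluid Green's function contributes an extra $(1+\cdot)^{-1/2}$ factor beyond its base $L^p_x\to L^r_x$ rate.

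To convert these factors into the targeted $(1+t)^{-3/4}$ decay (respectively $(1+t)^{-3/2}$ for the $L^\infty_x$ estimate), I would follow the two-regime strategy of Lemma \ref{TC1}: for each time variable I split its integration interval at the midpoint, choose intermediate $L^p_x$ spaces so that the outer Green's function acts from $L^1_x$ on the early half and from $L^2_x$ (respectively $L^\infty_x$) on the late half, and take the innermost input in $L^1_x$ so as to feed the $\|\partial^s_z g_0\|_{X^{x,1}_{\xi;\beta}}$ norm required by the statement. In each sub-region the integrand either lies in the convergent regime and yields pure polynomial decay, or it falls on a critical Beta-type integral $\int_0^\tau(1+\tau-\sigma)^{-a}(1+\sigma)^{-b}\,d\sigma$ with $a=1$ or $a+b=1$, contributing a single $\ln(1+\tau)$ factor. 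Across the $k-s$ nested integrations needed to descend to $\partial^s_z g_0$, the worst-case accumulation of these critical contributions is exactly $(\ln(1+t))^{k-1-s}$, matching the statement; the $L^\infty_x$ case is identical up to using $r=\infty$ at the outermost step.

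The main obstacle will be the careful bookkeeping of the logarithmic factors: for each value of $s$ and every combination of $\epsilon_i$-labels and intermediate-space choices, one must verify that no path through the nested integrations produces more than $k-1-s$ critical Beta-type contributions. This amounts to a systematic case analysis generalizing the two-region split of Lemma \ref{TC1} to $k$ nested time variables, tracking at each level whether the local configuration is generic (convergent, giving pure polynomial decay) or critical (giving one factor of $\ln(1+\tau)$), and ensuring that the generic regimes dominate except on a bounded number of critical paths.
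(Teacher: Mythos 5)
Your proposal is correct and follows essentially the same route as the paper, which carries out the case $k=2$ in detail and notes the general case is analogous: Duhamel unfolding into nested time integrals, the decomposition $\mathbb{G}^t(z)=\mathbb{G}_0^t(z)+\mathbb{G}_K^t(z)$, the identity $(\partial^j_z\mathsf{L}^z)=\mathsf{P}_1(\partial^j_z\mathsf{L}^z)\mathsf{P}_1$ from Lemma \ref{bz}(II) to gain the extra $(1+t-\tau)^{-1/2}$ via Lemma \ref{UCf}(II), exponential treatment of any $\mathbb{G}_K$ factor, and midpoint splitting with $L^1_x$/$L^2_x$ (resp.\ $L^\infty_x$) choices producing at most $k-1-s$ logarithmic factors. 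The only cosmetic difference is that you phrase it as an induction with full recursive unfolding for general $k$, which is exactly the bookkeeping the paper leaves implicit.
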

\begin{proof}
Below we will take $\partial_z^2g$ as an example to illustrate our method, and the estimates for other higher-order derivatives are similar. By differentiating the equation (\ref{UCD}), $\partial_z^2g$ satisfies that 
\begin{equation*}
\partial_t(\partial^2_zg)+\xi\cdot \nabla_x(\partial^2_zg)=2(\partial_z\mathsf{L}^z)(\partial_zg)+(\partial^2_z\mathsf{L}^z)g+\mathsf{L}^z(\partial^2_zg).
\end{equation*}
By Duhamel's principle, it follows
\begin{equation*}
\partial^2_zg(x,t,\xi,z)=\mathbb{G}^{t}(z)\partial^2_zg_0+\int_0^t\mathbb{G}^{t-\tau}(z)[2(\partial_z\mathsf{L}^z)(\partial_zg)+(\partial^2_z\mathsf{L}^z)g](\tau)d\tau.
\end{equation*}

By Lemma \ref{UCf}, the estimates for $\mathbb{G}^{t}(z)\partial^2_zg_0$ can be obtained.
Following the same procedure as the estimate for $\partial_zg$ in Lemma \ref{TC1}, we can obtain the estimate with $(\partial^2_z\mathsf{L}^z)g$ as the source. From Lemma \ref{TC1}, $\partial_zg$ satisfies
\begin{equation*}
\partial_zg=\mathbb{G}^{t}(z)\partial_zg_0+\int_0^t\mathbb{G}^{t-\tau}(z)(\partial_z\mathsf{L}^z)g(\tau)d\tau.
\end{equation*}
The estimates for the part
$$
\int_0^t\mathbb{G}^{t-\tau}(z)(\partial_z\mathsf{L}^z)\mathbb{G}^{\tau}(z)\partial_zg_0d\tau
$$
can be obtained following the exactly same procedure as in Lemma \ref{TC1}. Let
$$
g_R:=\mathbb{G}^{t}(z)\partial^2_zg_0+\int_0^t\mathbb{G}^{t-\tau}(z)[(\partial^2_z\mathsf{L}^z)g](\tau)d\tau+\int_0^t\mathbb{G}^{t-\tau}(z)(\partial_z\mathsf{L}^z)\mathbb{G}^{\tau}(z)\partial_zg_0d\tau,
$$
and we have 
\begin{equation}\label{p2r2}
\left\|g_R(t)\right\|_{X_{\xi;\beta}^{x,2}}\lesssim \frac{1}{(1+t)^{\frac34}}(\sum_{s=0}^{2}\left\|\partial^s_zg_0\right\|_{X_{\xi;\beta}^{x,1}}+\left\|\partial^s_zg_0\right\|_{X_{\xi;\beta}^{x,2}}),
\end{equation}
and 
\begin{equation*}\label{p2ri}
\left\|g_R(t)\right\|_{X_{\xi;\beta}^{x,\infty}}\lesssim \frac{1}{(1+t)^{\frac32}}(\sum_{s=0}^{2}\left\|\partial^s_zg_0\right\|_{X_{\xi;\beta}^{x,1}}+\left\|\partial^s_zg_0\right\|_{X_{\xi;\beta}^{x,\infty}}).
\end{equation*}

Therefore, we only need to consider the remaining part.
Define
\begin{equation*}\label{FFF1}
\int_0^t\int_{0}^{\tau_1}\mathbb{G}^{t-\tau_1}(z)(\partial_z\mathsf{L}^z)\mathbb{G}^{\tau_1-\tau}(z)(\partial_z\mathsf{L}^z)g(\tau)d\tau d\tau_1=:\sum_{i,j,m \in\{F,K\}}g_{ijm}.
\end{equation*}
%Next we use $1, 2$ and $3$ to represent the operator of $\mathbb{G}_{L;0}^{t}(z)$, $\mathbb{G}_{L;\bot}^{t}(z)$ and $\mathbb{G}_{S}^{t}(z)$, respectively.
For example, from the decompostion of solution operator $\mathbb{G}^t(z)$ in \eqref{GC}, $g_{FFF}$, $g_{FKF}$ and $g_{FFK}$ can be expressed as
\begin{equation*}\label{FFF2}
g_{FFF}=\int_0^t\int_{0}^{\tau_1}\mathbb{G}_{0}^{t-\tau_1}\mathsf{P}_1(\partial_z\mathsf{L}^z\mathsf{P}_1\mathbb{G}_{0}^{\tau_1-\tau}\mathsf{P}_1)(\partial_z\mathsf{L}^z\mathsf{P}_1\mathbb{G}_{0}^{\tau}g_0)(\tau)d\tau d\tau_1,
\end{equation*}
\begin{equation*}
g_{FKF}=\int_0^t\int_0^{\tau_1}\mathbb{G}_{0}^{t-\tau_1}\mathsf{P}_1(\partial_z\mathsf{L}^z\mathsf{P}_1\mathbb{G}_{K}^{\tau_1-\tau}\mathsf{P}_1)(\partial_z\mathsf{L}^z\mathsf{P}_1\mathbb{G}_0^{\tau}g_0)(\tau)d\tau d\tau_1,
\end{equation*}
and 
\begin{equation*}
g_{KKK}=\int_0^t\int_0^{\tau_1}\mathbb{G}_{K}^{t-\tau_1}\mathsf{P}_1(\partial_z\mathsf{L}^z\mathsf{P}_1\mathbb{G}_{K}^{\tau_1-\tau}\mathsf{P}_1)(\partial_z\mathsf{L}^z\mathsf{P}_1\mathbb{G}_K^{\tau}g_0)(\tau)d\tau d\tau_1.
\end{equation*}
The remaining items can be defined similarly.

Next, we consider $L^2_x$ estimate of $g_{FFF}$. By Lemma \ref{GTS} and  Lemma \ref{UCf}, for $1\le p\le r\le 2$, we have 
\begin{equation*}
\begin{aligned}
&\left\|g_{FFF}(t)\right\|_{X_{\xi;\beta}^{x,2}}\\
&=\left\|\int_0^t\int_{0}^{\tau_1}\mathbb{G}_{0}^{t-\tau_1}\mathsf{P}_1(\partial_z\mathsf{L}^z\mathsf{P}_1\mathbb{G}_{0}^{\tau_1-\tau}\mathsf{P}_1)(\partial_z\mathsf{L}^z\mathsf{P}_1\mathbb{G}_{0}^{\tau}g_0)d\tau d\tau_1\right\|_{X_{\xi;\beta}^{x,2}}\\
&\le C_G\int_0^t\int_0^{\tau_1}(1+t-\tau_1)^{-\frac32(\frac{1}{r}-\frac{1}{2})-\frac12}\left\|(\partial_z\mathsf{L}^z\mathsf{P}_1\mathbb{G}_{0}^{\tau_1-\tau}\mathsf{P}_1)(\partial_z\mathsf{L}^z\mathsf{P}_1\mathbb{G}_{0}^{\tau}g_0) \right\|_{X_{\xi;\beta-1}^{x,r}}d\tau d\tau_1\\
&\le C_G^2C_\beta^*\int_0^t\int_0^{\tau_1}(1+t-\tau_1)^{-\frac32(\frac{1}{r}-\frac{1}{2})-\frac12}(1+\tau_1-\tau)^{-\frac32(\frac{1}{p}-\frac{1}{r})-1}\left\|(\partial_z\mathsf{L}^z\mathsf{P}_1\mathbb{G}_{0}^{\tau}g_0)\right\|_{X_{\xi;\beta-1}^{x,p}}d\tau d\tau_1\\
&\le C_G^3(C_\beta^*)^2\int_0^t\int_0^{\tau_1}(1+t-\tau_1)^{-\frac32(\frac{1}{r}-\frac{1}{2})-\frac12}(1+\tau_1-\tau)^{-\frac32(\frac{1}{p}-\frac{1}{r})-1}(1+\tau)^{-\frac32(1-\frac{1}{p})-\frac12} \left\|g_0\right\|_{X_{\xi;\beta}^{x,1}}d\tau d\tau_1\\
&=C_G^3(C_\beta^*)^2\left\|g_0\right\|_{X_{\xi;\beta}^{x,1}}\bigg(\int^{\frac{t}{2}}_0\int^{\frac{\tau_1}{2}}_0+\int^{\frac{t}{2}}_0\int_{\frac{\tau_1}{2}}^{\tau_1}+\int_{\frac{t}{2}}^t\int^{\frac{\tau_1}{2}}_0+\int_{\frac{t}{2}}^t\int_{\frac{\tau_1}{2}}^{\tau_1}\bigg)\\
&=:C_G^3(C_\beta^*)^2\left\|g_0\right\|_{X_{\xi;\beta}^{x,1}}(I_1+I_2+I_3+I_4).\\
%&\le C_G^3(C_b^*)^2\frac{\ln^2(1+t)}{(1+t)^{\frac34}}\left\|\nu^2f_0(z)\right\|_{ L_x^1(L^2_{\xi})}.
\end{aligned}
\end{equation*}
Let $r=p=1$, then $I_1$ and $I_2$ satisfy that
\begin{equation*}
\begin{aligned}
I_1=\int^{\frac{t}{2}}_0\int^{\frac{\tau_1}{2}}_0(1+t-\tau_1)^{-\frac32(\frac{1}{1}-\frac{1}{2})-\frac12}(1+\tau_1-\tau)^{-\frac32(\frac{1}{1}-\frac{1}{1})-1}(1+\tau)^{-\frac32(1-\frac{1}{1})-\frac12}d\tau d\tau_1 \lesssim \frac{1}{(1+t)^{\frac34}},
\end{aligned}
\end{equation*}
and
\begin{equation*}
\begin{aligned}
I_2=\int^{\frac{t}{2}}_0\int_{\frac{\tau_1}{2}}^{\tau_1}(1+t-\tau_1)^{-\frac32(\frac{1}{1}-\frac{1}{2})-\frac12}(1+\tau_1-\tau)^{-\frac32(\frac{1}{1}-\frac{1}{1})-1}(1+\tau)^{-\frac32(1-\frac{1}{1})-\frac12}d\tau d\tau_1 \lesssim \frac{\ln(1+t)}{(1+t)^{\frac34}}.
\end{aligned}
\end{equation*}
Let $r=p=2$, then $I_3$ and $I_4$ satisfy that
\begin{equation*}
\begin{aligned}
I_3=\int_{\frac{t}{2}}^t\int^{\frac{\tau_1}{2}}_0(1+t-\tau_1)^{-\frac32(\frac{1}{2}-\frac{1}{2})-\frac12}(1+\tau_1-\tau)^{-\frac32(\frac{1}{2}-\frac{1}{2})-1}(1+\tau)^{-\frac32(1-\frac{1}{2})-\frac12}d\tau d\tau_1 \lesssim \frac{1}{(1+t)^{\frac34}},
\end{aligned}
\end{equation*}
and
\begin{equation*}
\begin{aligned}
I_4=\int_{\frac{t}{2}}^t\int_{\frac{\tau_1}{2}}^{\tau_1}(1+t-\tau_1)^{-\frac32(\frac{1}{2}-\frac{1}{2})-\frac12}(1+\tau_1-\tau)^{-\frac32(\frac{1}{2}-\frac{1}{2})-1}(1+\tau)^{-\frac32(1-\frac{1}{2})-\frac12}d\tau d\tau_1 \lesssim \frac{\ln(1+t)}{(1+t)^{\frac34}}.
\end{aligned}
\end{equation*}
Therefore, we have
\begin{equation}\label{p2a}
\left\|g_{FFF}(t)\right\|_{X_{\xi;\beta}^{x,2}}\lesssim \frac{\ln(1+t)}{(1+t)^{\frac34}}\left\|g_0\right\|_{X_{\xi;\beta}^{x,1}}.
\end{equation}

Next, we present the $L^2_x$ estimates for $g_{FKF}$ and $g_{KKK}$; the remaining estimates follow in the same way. By Lemma \ref{GTS} and  Lemma \ref{UCf}, for $g_{FKF}$, it follows 
\begin{equation*}
\begin{aligned}
&\left\|g_{FKF}(t)\right\|_{X_{\xi;\beta}^{x,2}}\\
&=\left\|\int_0^t\int_{0}^{\tau_1}\mathbb{G}_{0}^{t-\tau_1}\mathsf{P}_1(\partial_z\mathsf{L}^z\mathsf{P}_1\mathbb{G}_{K}^{\tau_1-\tau}\mathsf{P}_1)(\partial_z\mathsf{L}^z\mathsf{P}_1\mathbb{G}_{0}^{\tau}g_0)d\tau d\tau_1\right\|_{X_{\xi;\beta}^{x,2}}\\
&\le C_G\int_0^t\int_0^{\tau_1}(1+t-\tau_1)^{-\frac32(\frac{1}{r}-\frac{1}{2})-\frac12}\left\|(\partial_z\mathsf{L}^z\mathsf{P}_1\mathbb{G}_{K}^{\tau_1-\tau}\mathsf{P}_1)(\partial_z\mathsf{L}^z\mathsf{P}_1\mathbb{G}_{0}^{\tau}g_0) \right\|_{X_{\xi;\beta-1}^{x,r}}d\tau d\tau_1\\
&\le C_G^2C_\beta^*\int_0^t\int_0^{\tau_1}(1+t-\tau_1)^{-\frac32(\frac{1}{r}-\frac{1}{2})-\frac12}e^{-\frac{\tau_1-\tau}{C}}\left\|(\partial_z\mathsf{L}^z\mathsf{P}_1\mathbb{G}_{0}^{\tau}g_0)\right\|_{X_{\xi;\beta-1}^{x,r}}d\tau d\tau_1\\
&\le C_G^3(C_\beta^*)^2\int_0^t\int_0^{\tau_1}(1+t-\tau_1)^{-\frac32(\frac{1}{r}-\frac{1}{2})-\frac12}e^{-\frac{\tau_1-\tau}{C}}(1+\tau)^{-\frac32(1-\frac{1}{r})-\frac12} \left\|g_0\right\|_{X_{\xi;\beta}^{x,1}}d\tau d\tau_1\\
&=C_G^3(C_\beta^*)^2\left\|g_0\right\|_{X_{\xi;\beta}^{x,1}}\bigg(\int^{\frac{t}{2}}_0\int_{0}^{\tau_1}+\int_{\frac{t}{2}}^t\int_{0}^{\tau_1}\bigg):=C_G^3(C_\beta^*)^2\left\|g_0\right\|_{X_{\xi;\beta}^{x,1}}(I_5+I_6).\\
%&\le C_G^3(C_b^*)^2\frac{\ln^2(1+t)}{(1+t)^{\frac34}}\left\|\nu^2f_0(z)\right\|_{ L_x^1(L^2_{\xi})}.
\end{aligned}
\end{equation*}
Let $r=1$, then $I_5$ satisfies that
$$
\int_{\frac{t}{2}}^t\int_0^{\tau_1}(1+t-\tau_1)^{-\frac32(\frac{1}{1}-\frac{1}{2})-\frac12}e^{-\frac{\tau_1-\tau}{C}}(1+\tau)^{-\frac32(1-\frac{1}{1})-\frac12}d\tau d\tau_1\lesssim \frac{1}{(1+t)^{\frac34}}.
$$
Let $r=2$, then $I_6$ satisfies that
$$
\int_{\frac{t}{2}}^t\int_0^{\tau_1}(1+t-\tau_1)^{-\frac32(\frac{1}{2}-\frac{1}{2})-\frac12}e^{-\frac{\tau_1-\tau}{C}}(1+\tau)^{-\frac32(1-\frac{1}{2})-\frac12}d\tau d\tau_1\lesssim \frac{1}{(1+t)^{\frac34}}.
$$
Therefore, we have
\begin{equation}\label{p2b}
\left\|g_{FKF}(t)\right\|_{X_{\xi;\beta}^{x,2}}\lesssim \frac{1}{(1+t)^{\frac34}}\left\|g_0\right\|_{X_{\xi;\beta}^{x,1}}.
\end{equation}
For the $L^2_x$ estimate of $g_{KKK}$, it follows
\begin{equation}\label{p2c}
\begin{aligned}
&\left\|g_{KKK}(t)\right\|_{X_{\xi;\beta}^{x,2}}\\
&=\left\|\int_0^t\int_{0}^{\tau_1}\mathbb{G}_{K}^{t-\tau_1}\mathsf{P}_1(\partial_z\mathsf{L}^z\mathsf{P}_1\mathbb{G}_{K}^{\tau_1-\tau}\mathsf{P}_1)(\partial_z\mathsf{L}^z\mathsf{P}_1\mathbb{G}_{K}^{\tau}g_0)d\tau d\tau_1\right\|_{X_{\xi;\beta}^{x,2}}\\
&\le C_G\int_0^t\int_0^{\tau_1}e^{-\frac{t-\tau_1}{C}}\left\|(\partial_z\mathsf{L}^z\mathsf{P}_1\mathbb{G}_{K}^{\tau_1-\tau}\mathsf{P}_1)(\partial_z\mathsf{L}^z\mathsf{P}_1\mathbb{G}_{K}^{\tau}g_0) \right\|_{X_{\xi;\beta-1}^{x,2}}d\tau d\tau_1\\
&\le C_G^2C_\beta^*\int_0^t\int_0^{\tau_1}e^{-\frac{t-\tau_1}{C}}e^{-\frac{\tau_1-\tau}{C}}\left\|(\partial_z\mathsf{L}^z\mathsf{P}_1\mathbb{G}_{K}^{\tau}g_0)\right\|_{X_{\xi;\beta-1}^{x,2}}d\tau d\tau_1\\
&\le C_G^3(C_\beta^*)^2\int_0^t\int_0^{\tau_1}e^{-\frac{t-\tau_1}{C}}e^{-\frac{\tau_1-\tau}{C}}e^{-\tau/C} \left\|g_0\right\|_{X_{\xi;\beta}^{x,2}}d\tau d\tau_1\\
&\lesssim e^{-t/C}\left\|g_0\right\|_{X_{\xi;\beta}^{x,2}}.%\lesssim e^{-t/C}(\left\|g_0\right\|_{X_{\xi;\beta}^{x,1}}+\left\|g_0\right\|_{X_{\xi;\beta}^{x,\infty}}).
\end{aligned}
\end{equation}
Combining \eqref{p2r2}, \eqref{p2a}, \eqref{p2b} and \eqref{p2c}, together with the remaining estimates, we have
\begin{equation*}
\begin{aligned}
\left\|\partial^2_zg(t)\right\|_{X_{\xi;\beta}^{x,2}}&\le C_{G,2}\bigg[\frac{1}{(1+t)^{\frac34}}(\left\|\partial^2_zg_0\right\|_{X_{\xi;\beta}^{x,1}}+\left\|\partial^k_zg_0\right\|_{X_{\xi;\beta}^{x,2}})\\
&\ \ +\sum_{s=0}^{1}\frac{(\ln(1+t))^{1-s}}{(1+t)^{\frac34}}(\left\|\partial^s_zg_0\right\|_{X_{\xi;\beta}^{x,1}}+\left\|\partial^s_zg_0\right\|_{X_{\xi;\beta}^{x,2}})\bigg].\\
\end{aligned}
\end{equation*}

 As for the $X_{\xi;\beta}^{x,\infty}$ estimate, similarly to Lemma \ref{TC1}, we still change the index in the above procedure from $-\frac32(\frac{1}{r}-\frac{1}{2})-\frac12$ to $-\frac32(\frac{1}{r}-\frac{1}{\infty})-\frac12$, so we omit it.
 This completes the proof.
\end{proof}

%\section{onlinear problem}
\section{The nonlinear problem}\label{Nonlinear problem}
In this section, we examine the initial value problem for the nonlinear Boltzmann equation and provide the proof of Theorem \ref{T1}. We will propose an appropriate ansatz for the solution and close the nonlinear problem (\ref{UC}).

\subsection{Solutions to the nonlinear problems}\label{nonlinear problems}
 Consider the nonlinear Boltzmann equation (\ref{UC}), and let $\partial^k_zf_0\in X^{x,1}_{\xi;\beta}\cap X^{x,\infty}_{\xi;\beta}$, for $0\le k\le\alpha, \beta>3/2$. By Duhamel's principle, the solution of (\ref{UC}) satisfies the integal equation
\begin{equation*}
f(x,t,\xi,z)=\mathbb{G}^t(z)f_0+\int_0^t\mathbb{G}^{t-\tau}(z)\Gamma^z(f,f)(\tau)d\tau.
\end{equation*}

Following the approach in Section 10 of \cite{liu2004green}, we apply the fixed point method to show the existence of the solution. The procedure consists of
two steps: the first step is to show the nonlinear map is an into, and the second one is to prove
the map is contractive. Since they are analogous, here we mainly show the first one.
From the linearized problem and Lemma \ref{UCf}, we have
\begin{equation*}
\begin{aligned}
\left\|\mathbb{G}^t(z)f_0\right\|_{X_{\xi;\beta}^{x,2}}&\le \frac{C_G}{(1+t)^{\frac34}}(\left\|f_0\right\|_{X_{\xi;\beta}^{x,1}}+\left\|f_0\right\|_{X_{\xi;\beta}^{x,2}}),\\
\left\|\mathbb{G}^t(z)f_0\right\|_{X_{\xi;\beta}^{x,\infty}}&\le \frac{C_G}{(1+t)^{\frac34}}(\left\|f_0\right\|_{X_{\xi;\beta}^{x,1}}+\left\|f_0\right\|_{X_{\xi;\beta}^{x,\infty}}).
\end{aligned}
\end{equation*}
This motivates us to make the following ansatz:
\begin{equation}\label{ansatzINL}
 \begin{cases}
\left\|f(t)\right\|_{X_{\xi;\beta}^{x,2}}\le\frac{10C_G}{(1+t)^{\frac34}}(\left\|f_0\right\|_{X_{\xi;\beta}^{x,1}}+\left\|f_0\right\|_{X_{\xi;\beta}^{x,\infty}}),\\
\left\|f(t)\right\|_{X_{\xi;\beta}^{x,\infty}}\le\frac{10C_G}{(1+t)^{\frac32}}(\left\|f_0\right\|_{X_{\xi;\beta}^{x,1}}+\left\|f_0\right\|_{X_{\xi;\beta}^{x,\infty}}).
\end{cases}
\end{equation}

Under ansatz \eqref{ansatzINL}, we are mainly concerned with the nonlinear part. %We must firstly consider the $L^1_x$ estimate and $L^{\infty}_x$ estimate. 
From \eqref{gamma} in Lemma \ref{bz}, we can see that the weight of $\xi$ in the estimate of nonlinear operator $\Gamma^z$ mismatch the weight of $\xi$ in the estimate of $h$ and $u$.
 To address this issue, we need  Lemma \ref{GTS}, and $\mathbb{G}^t(z)$ has the same properties as $\mathbb{G}^t$.

Now we close the ansatz (\ref{ansatzINL}). For the $L^2_x$ estimate, it follows
\begin{equation*}
\begin{aligned}
\left\|f(t)\right\|_{X_{\xi;\beta}^{x,2}}\le \left\|\mathbb{G}^t(z)f_0\right\|_{X_{\xi;\beta}^{x,2}}+\left\|\int_0^t\mathbb{G}^{t-\tau}(z)\Gamma^z(f,f)(\tau)d\tau\right\|_{X_{\xi;\beta}^{x,2}}.\\
\end{aligned}
\end{equation*}
And by  Lemma \ref{GTS}, Lemma \ref{UCf} and (\ref{gamma}), the nonlinear term satisfies 
\begin{equation}\label{4.11}
\begin{aligned}
\left\|  \int_0^t\mathbb{G}^{t-\tau}(z)\Gamma^z(f,f)(\tau)d\tau \right\|_{X_{\xi;\beta}^{x,2}}&\lesssim\int_0^t \frac{1}{(1+t-\tau)^{\frac34+\frac12}}\left\|\Gamma^z(f,f)(\tau)\right\|_{X_{\xi;\beta-1}^{x,1}}+e^{-\frac{t-\tau}{C}} \left\| \Gamma^z(f,f)(\tau) \right\|_{X_{\xi;\beta-1}^{x,2}}d\tau\\
&\lesssim\int_0^t \frac{1}{(1+t-\tau)^{\frac34+\frac12}}\left\|f \right\|_{X_{\xi;\beta}^{x,2}}\left\|f \right\|_{X_{\xi;\beta}^{x,2}}+e^{-\frac{t-\tau}{C}}\left\|f \right\|_{X_{\xi;\beta}^{x,2}}\left\|f \right\|_{X_{\xi;\beta}^{x,\infty}}d\tau\\
&\lesssim\int_0^t \frac{1}{(1+t-\tau)^{\frac34+\frac12}}\frac{1}{(1+\tau)^{\frac32}}(\left\|f_0\right\|_{X_{\xi;\beta}^{x,1}}+\left\|f_0\right\|_{X_{\xi;\beta}^{x,\infty}})^2d\tau\\
&\lesssim\frac{1}{(1+t)^{\frac54}}(\left\|f_0\right\|_{X_{\xi;\beta}^{x,1}}+\left\|f_0\right\|_{X_{\xi;\beta}^{x,\infty}})^2.
\end{aligned}
\end{equation}
For the $L^{\infty}_x$ estimate, we have 
\begin{equation}\label{4.12}
\begin{aligned}
\left\|  \int_0^t\mathbb{G}^{t-\tau}(z)\Gamma^z(f,f)(\tau)d\tau \right\|_{X_{\xi;\beta}^{x,\infty}}&\lesssim\int_0^t \frac{1}{(1+t-\tau)^{\frac32+\frac12}}\left\|\Gamma^z(f,f)(\tau)\right\|_{X_{\xi;\beta-1}^{x,1}}+e^{-\frac{t-\tau}{C}} \left\| \Gamma^z(f,f)(\tau) \right\|_{X_{\xi;\beta-1}^{x,\infty}}d\tau\\
&\lesssim\int_0^t \frac{1}{(1+t-\tau)^{\frac32+\frac12}}\left\|f \right\|_{X_{\xi;\beta}^{x,2}}\left\|f \right\|_{X_{\xi;\beta}^{x,2}}+e^{-\frac{t-\tau}{C}}\left\|f \right\|_{X_{\xi;\beta}^{x,\infty}}\left\|f \right\|_{X_{\xi;\beta}^{x,\infty}}d\tau\\
&\lesssim\int_0^t \frac{1}{(1+t-\tau)^{\frac32+\frac12}}\frac{1}{(1+\tau)^{\frac32}}(\left\|f_0\right\|_{X_{\xi;\beta}^{x,1}}+\left\|f_0\right\|_{X_{\xi;\beta}^{x,\infty}})^2d\tau\\
&\lesssim\frac{1}{(1+t)^{\frac32}}(\left\|f_0\right\|_{X_{\xi;\beta}^{x,1}}+\left\|f_0\right\|_{X_{\xi;\beta}^{x,\infty}})^2.
\end{aligned}
\end{equation}
If we assume that the initial value is sufficiently small, combining Lemma \ref{UCf}, \eqref{4.11} and \eqref{4.12}, we obtain the $L^{2}_x$ estimate and the $L^{\infty}_x$ estimate
for $f$, and these just satisfy the ansatz (\ref{ansatzINL}), which implies that the nonlinear map is an into.
On the other hand, we can prove the map is contractive by considering
\begin{equation*}
\int_0^t\mathbb{G}^{t-\tau}(z)\Gamma^z(g,g)(\tau)d\tau-\int_0^t\mathbb{G}^{t-\tau}(z)\Gamma^z(h,h)(\tau)d\tau=\int_0^t\mathbb{G}^{t-\tau}(z)(\Gamma^z(g,g-h)+\Gamma^z(g-h,h))d\tau.
\end{equation*}

By the fixed point theorem, we have the following lemma:
\begin{lemma}\label{INLf}
There exist positive constants $\delta$ and $C_0$ such that if
 $$
\left\|f_0\right\|_{X_{\xi;\beta}^{x,1}}+\left\|f_0\right\|_{X_{\xi;\beta}^{x,\infty}}<\delta,
$$
then there exists a unique global solution $f=f(x,t,\xi,z)$ for equation (\ref{UC}), satisfying
$$
\left\|f(t)\right\|_{X_{\xi;\beta}^{x,2}}\le C_0(1+t)^{-\frac34}(\left\|f_0\right\|_{X_{\xi;\beta}^{x,1}}+\left\|f_0\right\|_{X_{\xi;\beta}^{x,\infty}}),
$$
and
$$
\left\|f(t)\right\|_{X_{\xi;\beta}^{x,\infty}}\le C_0(1+t)^{-\frac32}(\left\|f_0\right\|_{X_{\xi;\beta}^{x,1}}+\left\|f_0\right\|_{X_{\xi;\beta}^{x,\infty}}).
$$

In this lemma, all constants are independent of $z$.
\end{lemma}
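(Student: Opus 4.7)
The plan is to close the fixed-point argument sketched in the paragraphs preceding the lemma. Writing $\varepsilon := \|f_0\|_{X_{\xi;\beta}^{x,1}}+\|f_0\|_{X_{\xi;\beta}^{x,\infty}}$, I would work on the complete metric space
$$
\mathcal{A}_\delta := \Bigl\{\, f \;:\; \|f\|_{\mathcal{A}} \le 10 C_G \varepsilon \,\Bigr\},\qquad
\|f\|_{\mathcal{A}} := \sup_{t\ge 0,\, z\in I_z}\bigl\{(1+t)^{3/4}\|f(t,z)\|_{X_{\xi;\beta}^{x,2}} + (1+t)^{3/2}\|f(t,z)\|_{X_{\xi;\beta}^{x,\infty}}\bigr\},
$$
with the natural distance inherited from $\|\cdot\|_{\mathcal{A}}$, and on it I would study the Duhamel map
$$
T[f](x,t,\xi,z) := \mathbb{G}^t(z) f_0 + \int_0^t \mathbb{G}^{t-\tau}(z)\, \Gamma^z(f,f)(\tau)\, d\tau.
$$
A fixed point of $T$ in $\mathcal{A}_\delta$ is precisely the desired solution of \eqref{UC}, and its $\mathcal{A}$-norm bound reproduces both announced decay rates.

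The first step is to verify that $T$ sends $\mathcal{A}_\delta$ into itself. This is essentially carried out in the excerpt: Lemma \ref{UCf} controls the linear piece $\mathbb{G}^t(z) f_0$ with constant $C_G$, while for the nonlinear piece the key observation is that \eqref{gamma} in Lemma \ref{bz} only bounds $\Gamma^z(f,f)$ in $L^\infty_{\xi,\beta-1}$, with a mismatch of one $\xi$-weight. Lemma \ref{GTS} is designed precisely to absorb this mismatch, while the microscopic nature of $\Gamma^z$ (part (II) of Lemma \ref{bz}) yields the extra $(1+t-\tau)^{-1/2}$ factor. Combining this with the quadratic decay rate $(1+\tau)^{-3/2}$ built into $\mathcal{A}_\delta$ gives, as in \eqref{4.11} and \eqref{4.12}, nonlinear contributions of size $O(\varepsilon^2)(1+t)^{-5/4}$ and $O(\varepsilon^2)(1+t)^{-2}$ respectively, both absorbed into the ansatz by choosing $\delta$ small.

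The second step is the contraction estimate $\|T[f]-T[g]\|_{\mathcal{A}} \le \tfrac12 \|f-g\|_{\mathcal{A}}$ for $f,g \in \mathcal{A}_\delta$. Using the bilinear identity
$$
\Gamma^z(f,f) - \Gamma^z(g,g) = \Gamma^z(f,\, f-g) + \Gamma^z(f-g,\, g),
$$
together with the bilinear form of \eqref{gamma}, I would repeat the two computations \eqref{4.11}--\eqref{4.12} with one factor replaced by $f-g$. Each such term carries a factor of $\|f\|_{\mathcal{A}}+\|g\|_{\mathcal{A}} \le 20 C_G\varepsilon$, so the resulting Lipschitz constant is $O(\varepsilon)$ and becomes $\le 1/2$ for $\delta$ sufficiently small. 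The Banach fixed-point theorem then delivers a unique fixed point $f \in \mathcal{A}_\delta$, which is the sought global-in-time mild solution, and the $\mathcal{A}$-norm bound is exactly the conclusion of the lemma.

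I expect the argument to be routine once the refined Green's function estimates are in hand; the only genuinely delicate point is the $\xi$-weight bookkeeping ($\beta \to \beta-1$ from $\Gamma^z$, then $\beta-1 \to \beta$ via Lemma \ref{GTS}) together with exploiting $\mathsf{P}_0\Gamma^z = 0$ to recover the extra $(1+t-\tau)^{-1/2}$, which are both already packaged in the earlier lemmas. Uniformity of $\delta$ and $C_0$ in $z$ is automatic since every constant appearing in Lemmas \ref{UCf}, \ref{GTS}, and \ref{bz} is independent of $z$, and the fixed-point iteration can be run simultaneously for all $z\in I_z$ in the space $\mathcal{A}_\delta$, whose norm takes an essential supremum over $z$.
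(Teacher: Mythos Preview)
Your proposal is correct and follows essentially the same approach as the paper: set up the Duhamel map on the weighted space encoding the ansatz \eqref{ansatzINL}, use Lemmas \ref{UCf}, \ref{GTS}, and \ref{bz} (including $\mathsf{P}_0\Gamma^z=0$) to verify that the map is into and contractive, and conclude by Banach's fixed-point theorem with constants uniform in $z$. One small slip: the $L^\infty_x$ nonlinear contribution decays like $(1+t)^{-3/2}$, not $(1+t)^{-2}$ (the $\tau\ge t/2$ half of the integral dominates), but this is still more than enough to close the ansatz.
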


\subsection{The proof of the  main theorem}Next, we study the derivatives of the solution for equation (\ref{UC}) to complete the proof of the main theorem. We first consider the first-order derivative $\partial_zf$.
By differentiating the equation (\ref{UC}) with respect to $z$, using
$$
\partial_z(\mathsf{L}^zf)=(\partial_z\mathsf{L}^z)f+\mathsf{L}^z(\partial_zf), \partial_z(\Gamma^z(f,f))=(\partial_z\Gamma^z)(f,f)+2\Gamma^z(\partial_zf,f),
$$
 we have
\begin{equation}\label{NLFD}
 \begin{cases}
(\partial_zf)_t+\xi\cdot \nabla_x(\partial_zf)=(\partial_z\mathsf{L}^z)f+\mathsf{L}^z(\partial_zf)+(\partial_z\Gamma^z)(f,f)+2\Gamma^z(\partial_zf,f),\\
\partial_zf(x,0,\xi,z)=\partial_zf_0(x,\xi,z).
\end{cases}
\end{equation}

By Lemma \ref{TC1}, we make the following ansatz for the solution of (\ref{NLFD}):
\begin{equation}\label{ansatzINL2}
\begin{aligned}
\left\|\partial_zf(t)\right\|_{X_{\xi;\beta}^{x,2}}\le\frac{10C_{G,1}}{(1+t)^{\frac34}}(\left\|\partial_zf_0\right\|_{X_{\xi;\beta}^{x,1}}+\left\|\partial_zf_0\right\|_{X_{\xi;\beta}^{x,\infty}}+\left\|f_0\right\|_{X_{\xi;\beta}^{x,1}}+\left\|f_0\right\|_{X_{\xi;\beta}^{x,\infty}}),\\
\left\|\partial_zf(t)\right\|_{X_{\xi;\beta}^{x,\infty}}\le\frac{10C_{G,1}}{(1+t)^{\frac32}}(\left\|\partial_zf_0\right\|_{X_{\xi;\beta}^{x,1}}+\left\|\partial_zf_0\right\|_{X_{\xi;\beta}^{x,\infty}}+\left\|f_0\right\|_{X_{\xi;\beta}^{x,1}}+\left\|f_0\right\|_{X_{\xi;\beta}^{x,\infty}}).
\end{aligned}
\end{equation}
By Duhamel's principle, $\partial_zf$ satisfies the integal equation
\begin{equation*}
\begin{aligned}
\partial_zf(x,t,\xi,z)&=\mathbb{G}^t(z)\partial_zf_0+\int_0^t\mathbb{G}^{t-\tau}(z)\big[(\partial_z\mathsf{L}^z)f+(\partial_z\Gamma^z)(f,f)+2\Gamma^z(\partial_zf,f)\big](\tau)d\tau.
\end{aligned}
\end{equation*}
We estimate the integral part term by term.
Firstly, note that
\begin{equation}\label{xianxing}
\int_0^t\mathbb{G}^{t-\tau}(z)(\partial_z\mathsf{L}^z)f(\tau)d\tau=\int_0^t\mathbb{G}^{t-\tau}(\partial_z\mathsf{L}^z)\mathbb{G}^{\tau}f_0d\tau+\int_0^t\int_0^{\tau_1}\mathbb{G}^{t-\tau_1}(\partial_z\mathsf{L}^z)\mathbb{G}^{\tau_1-\tau}\Gamma^z(f,f)(\tau)d\tau d\tau_1.
\end{equation}
We observe that
 $
\mathbb{G}^t(z)\partial_zf_0+\int_0^t\mathbb{G}^{t-\tau}(\partial_z\mathsf{L}^z)\mathbb{G}^{\tau}f_0d\tau
$
is the solution of equation (\ref{UCD}) with initial data $f_0$, and using the results of Lemma \ref{TC1}, we obtain the following estimate:
\begin{equation*}
\begin{aligned}
\bigg\|\mathbb{G}^t(z)\partial_zf_0+&\int_0^t\mathbb{G}^{t-\tau}(\partial_z\mathsf{L}^z)\mathbb{G}^{\tau}f_0d\tau\bigg\|_{X_{\xi;\beta}^{x,2}}\\
&\le \frac{C_{G,1}}{(1+t)^{\frac34}}(\left\|f_0\right\|_{X_{\xi;\beta}^{x,1}}+\left\|f_0\right\|_{X_{\xi;\beta}^{x,\infty}}+\left\|\partial_zf_0\right\|_{X_{\xi;\beta}^{x,1}}+\left\|\partial_zf_0\right\|_{X_{\xi;\beta}^{x,\infty}}),\\
\end{aligned}
\end{equation*}
and
\begin{equation*}
\begin{aligned}
\bigg\|\mathbb{G}^t(z)\partial_zf_0+&\int_0^t\mathbb{G}^{t-\tau}(\partial_z\mathsf{L}^z)\mathbb{G}^{\tau}f_0d\tau\bigg\|_{X_{\xi;\beta}^{x,\infty}}\\
&\le \frac{C_{G,1}}{(1+t)^{\frac32}}(\left\|f_0\right\|_{X_{\xi;\beta}^{x,1}}+\left\|f_0\right\|_{X_{\xi;\beta}^{x,\infty}}+\left\|\partial_zf_0\right\|_{X_{\xi;\beta}^{x,1}}+\left\|\partial_zf_0\right\|_{X_{\xi;\beta}^{x,\infty}}).
\end{aligned}
\end{equation*}

Next, we estimate the remaining part of (\ref{xianxing}), by Lemma \ref{GTS}, Lemma \ref{UCf}, Lemma \ref{INLf} and (\ref{gamma}), we have
\begin{equation*}
\begin{aligned}
&\left\| \int_0^t\int_0^{\tau_1}\mathbb{G}^{t-\tau_1}(\partial_z\mathsf{L}^z)\mathbb{G}^{\tau_1-\tau}\Gamma^z(f,f)(\tau)d\tau d\tau_1\right\|_{X^{x,2}_{\xi;\beta}}\\
&\lesssim\int_0^t\int_0^{\tau_1}(1+t-\tau_1)^{-\frac32(\frac{2}{3}-\frac{1}{2})-\frac12}\left\|(\partial_z\mathsf{L}^z)\mathsf{P}_1\mathbb{G}^{\tau_1-\tau}\Gamma^z(f,f)(\tau)\right\|_{X^{x,\frac32}_{\xi;\beta-1}}d\tau d\tau_1\\
&\lesssim\int_0^t\int_0^{\tau_1}(1+t-\tau_1)^{-\frac34}(1+\tau_1-\tau)^{-\frac32(1-\frac{2}{3})-1}\left\| \mathsf{P}_1\Gamma^z(f,f)(\tau)\right\|_{X^{x,1}_{\xi;\beta-1}}d\tau d\tau_1\\
&\lesssim\int_0^t\int_0^{\tau_1}(1+t-\tau_1)^{-\frac34}(1+\tau_1-\tau)^{-\frac32}\left\|f(\tau)\right\|^2_{X^{x,2}_{\xi;\beta}}d\tau d\tau_1\\
&\lesssim\int_0^t\int_0^{\tau_1}(1+t-\tau_1)^{-\frac34}(1+\tau_1-\tau)^{-\frac32}(1+\tau)^{-\frac32} (\left\|f_0\right\|_{X_{\xi;\beta}^{x,1}}+\left\|f_0\right\|_{X_{\xi;\beta}^{x,\infty}})^2d\tau d\tau_1\\
&\lesssim\frac{1}{(1+t)^{\frac34}}(\left\|f_0\right\|_{X_{\xi;\beta}^{x,1}}+\left\|f_0\right\|_{X_{\xi;\beta}^{x,\infty}})^2,
\end{aligned}
\end{equation*}
and
\begin{equation*}
\begin{aligned}
&\left\| \int_0^t\int_0^{\tau_1}\mathbb{G}^{t-\tau_1}(\partial_z\mathsf{L}^z)\mathbb{G}^{\tau_1-\tau}\Gamma^z(f,f)(\tau)d\tau d\tau_1\right\|_{X^{x,\infty}_{\xi;\beta}}\\
&\lesssim\int_0^t\int_0^{\tau_1}(1+t-\tau_1)^{-\frac32(\frac{2}{3})-\frac12}\left\|(\partial_z\mathsf{L}^z)\mathsf{P}_1\mathbb{G}^{\tau_1-\tau}\Gamma^z(f,f)(\tau)\right\|_{X^{x,\frac32}_{\xi;\beta-1}}d\tau d\tau_1\\
&\lesssim\int_0^t\int_0^{\tau_1}(1+t-\tau_1)^{-\frac32}(1+\tau_1-\tau)^{-\frac32(1-\frac{2}{3})-1}\left\| \mathsf{P}_1\Gamma^z(f,f)(\tau)\right\|_{X^{x,1}_{\xi;\beta-1}}d\tau d\tau_1\\
&\lesssim\int_0^t\int_0^{\tau_1}(1+t-\tau_1)^{-\frac32}(1+\tau_1-\tau)^{-\frac32}\left\|f(\tau)\right\|^2_{X^{x,2}_{\xi;\beta}}d\tau d\tau_1\\
&\lesssim\int_0^t\int_0^{\tau_1}(1+t-\tau_1)^{-\frac32}(1+\tau_1-\tau)^{-\frac32}(1+\tau)^{-\frac32} (\left\|f_0\right\|_{X_{\xi;\beta}^{x,1}}+\left\|f_0\right\|_{X_{\xi;\beta}^{x,\infty}})^2d\tau d\tau_1\\
&\lesssim\frac{1}{(1+t)^{\frac32}}(\left\|f_0\right\|_{X_{\xi;\beta}^{x,1}}+\left\|f_0\right\|_{X_{\xi;\beta}^{x,\infty}})^2.
\end{aligned}
\end{equation*}

 The estimate satisfied by $\partial_z\Gamma^z$ is similar to that satisfied by $\Gamma^z$, as given in \eqref{gamma}, so
the estimate procedure for $\int_0^t\mathbb{G}^{t-\tau}(z)(\partial_z\Gamma^z)(f,f)(\tau)d\tau$ is exactly the same as \eqref{4.11} and \eqref{4.12} in Section \ref{nonlinear problems}, and it follows
\begin{equation*}
\begin{aligned}
\left\|\int_0^t\mathbb{G}^{t-\tau}(z)(\partial_z\Gamma^z)(f,f)(\tau)d\tau\right\|_{X_{\xi;\beta}^{x,2}}\lesssim\frac{1}{(1+t)^{\frac34}}(\left\|f_0\right\|_{X_{\xi;\beta}^{x,1}}+\left\|f_0\right\|_{X_{\xi;\beta}^{x,\infty}})^2,\\
\left\|\int_0^t\mathbb{G}^{t-\tau}(z)(\partial_z\Gamma^z)(f,f)(\tau)d\tau\right\|_{X_{\xi;\beta}^{x,\infty}}\lesssim\frac{1}{(1+t)^{\frac32}}(\left\|f_0\right\|_{X_{\xi;\beta}^{x,1}}+\left\|f_0\right\|_{X_{\xi;\beta}^{x,\infty}})^2.
\end{aligned}
\end{equation*}

For the last part, by Lemma \ref{INLf} and the ansatz (\ref{ansatzINL}), we have
\begin{equation*}
\begin{aligned}
&\left\|\int_0^t\mathbb{G}^{t-\tau}(z)\Gamma^z(\partial_zf,f)(\tau)d\tau\right\|_{X^{x,2}_{\xi;\beta}}\\
&\lesssim\int_0^t\frac{1}{(1+t-\tau)^{\frac34}}\frac{1}{(1+\tau)^{\frac34}}\frac{1}{(1+\tau)^{\frac34}}\bigg[\sum_{s=0}^{1}(\left\|\partial^s_zf_0\right\|_{X_{\xi;\beta}^{x,1}}+\left\|\partial^s_zf_0\right\|_{X_{\xi;\beta}^{x,\infty}})\bigg]^2d\tau\\
&\lesssim\frac{1}{(1+t)^{\frac34}}\bigg[\sum_{s=0}^{1}(\left\|\partial^s_zf_0\right\|_{X_{\xi;\beta}^{x,1}}+\left\|\partial^s_zf_0\right\|_{X_{\xi;\beta}^{x,\infty}})\bigg]^2,
\end{aligned}
\end{equation*}
and
\begin{equation*}
\begin{aligned}
&\left\|\int_0^t\mathbb{G}^{t-\tau}(z)\Gamma^z(\partial_zf,f)(\tau)d\tau\right\|_{X^{x,\infty}_{\xi;\beta}}\\
&\lesssim\int_0^t\frac{1}{(1+t-\tau)^{\frac32}}\frac{1}{(1+\tau)^{\frac34}}\frac{1}{(1+\tau)^{\frac34}}\bigg[\sum_{s=0}^{1}(\left\|\partial^s_zf_0\right\|_{X_{\xi;\beta}^{x,1}}+\left\|\partial^s_zf_0\right\|_{X_{\xi;\beta}^{x,\infty}})\bigg]^2d\tau\\
&\lesssim\frac{1}{(1+t)^{\frac32}}\bigg[\sum_{s=0}^{1}(\left\|\partial^s_zf_0\right\|_{X_{\xi;\beta}^{x,1}}+\left\|\partial^s_zf_0\right\|_{X_{\xi;\beta}^{x,\infty}})\bigg]^2.
\end{aligned}
\end{equation*}

 Based on the estimates for the three parts, we close the ansatz (\ref{ansatzINL2}). Thus, we have obtained the large-time behavior of the derivatives of the solution to the nonlinear problem.

Next, we study the higher-order derivatives.
We take the second-order derivative $\partial^2_zf$ as an example, and the estimates for other higher-order derivatives are similar. By differentiating equation (\ref{UC}) twice, and using
$$
\partial^2_z(\mathsf{L}^zf)=2(\partial_z\mathsf{L}^z)(\partial_zf)+(\partial^2_z\mathsf{L}^z)f+\mathsf{L}^z(\partial^2_zf),
$$
$$
\partial^2_z(\Gamma^z(f,f))=(\partial^2_z\Gamma^z)(f,f)+2\Gamma^z(\partial^2_zf,f)+2\Gamma^z(\partial_zf,\partial_zf)+4(\partial_z\Gamma^z)(\partial_zf,f),
$$
we have 
\begin{equation*}\label{NLSD}
 \begin{cases}
(\partial^2_zf)_t+\xi\cdot \nabla_x(\partial^2_zf)&=2(\partial_z\mathsf{L}^z)(\partial_zf)+(\partial^2_z\mathsf{L}^z)f+\mathsf{L}^z(\partial^2_zf)\\
& \ \  +(\partial^2_z\Gamma^z)(f,f)+2\Gamma^z(\partial^2_zf,f)+2\Gamma^z(\partial_zf,\partial_zf)+4(\partial_z\Gamma^z)(\partial_zf,f)\\
\partial^2_zf(x,0,\xi,z)&=\partial^2_zf_0(x,\xi,z).\\
\end{cases}
\end{equation*}

By Lemma \ref{TC2}, we make the following ansatz for $\partial^2_zf$:
\begin{equation}\label{ansatzINL3}
\begin{aligned}
\left\|\partial^2_zf(t)\right\|_{X_{\xi;\beta}^{x,2}}&\le 10C_{G,2}\bigg[\frac{1}{(1+t)^{\frac34}}(\left\|\partial^2_zf_0\right\|_{X_{\xi;\beta}^{x,1}}+\left\|\partial^2_zf_0\right\|_{X_{\xi;\beta}^{x,\infty}})\\
&\ \ +\sum_{s=0}^{1}\frac{(\ln(1+t))^{1-s}}{(1+t)^{\frac34}}(\left\|\partial^s_zf_0\right\|_{X_{\xi;\beta}^{x,1}}+\left\|\partial^s_zf_0\right\|_{X_{\xi;\beta}^{x,\infty}})\bigg],\\
\left\|\partial^2_zf(t)\right\|_{X_{\xi;\beta}^{x,\infty}}&\le 10C_{G,2}\bigg[\frac{1}{(1+t)^{\frac32}}(\left\|\partial^2_zf_0\right\|_{X_{\xi;\beta}^{x,1}}+\left\|\partial^2_zf_0\right\|_{X_{\xi;\beta}^{x,\infty}})\\
&\ \ +\sum_{s=0}^{1}\frac{(\ln(1+t))^{1-s}}{(1+t)^{\frac32}}(\left\|\partial^s_zf_0\right\|_{X_{\xi;\beta}^{x,1}}+\left\|\partial^s_zf_0\right\|_{X_{\xi;\beta}^{x,\infty}})\bigg].\\
\end{aligned}
\end{equation}

By Duhamel's principle, $\partial^2_zf$ satisfies the integal equation
\begin{equation*}
\begin{aligned}
\partial^2_zf(x,t,\xi,z)&=\mathbb{G}^t(z)\partial^2_zf_0+\int_0^t\mathbb{G}^{t-\tau}(z)\big[2(\partial_z\mathsf{L}^z)(\partial_zf)+(\partial^2_z\mathsf{L}^z)f\\
&\ \ +\partial^2_z\Gamma^z(f,f)+2\Gamma^z(\partial^2_zf,f)+2\Gamma^z(\partial_zf,\partial_zf)+4\partial_z\Gamma^z(\partial_zf,f)\big](\tau)d\tau.
\end{aligned}
\end{equation*}
Clearly, for the part
$$\mathbb{G}^t(z)\partial^2_zf_0+\int_0^t\mathbb{G}^{t-\tau}(z)\big[2(\partial_z\mathsf{L}^z)(\partial_zf)+(\partial^2_z\mathsf{L}^z)f\big](\tau)d\tau,$$
 the proof procedure is the same as the Lemma \ref{TC2} and the case of the first-order derivative, so we omit the details of calculation and state the results as follows:
\begin{equation}\label{bianhao1}
\begin{aligned}
\bigg\|\mathbb{G}^t(z)\partial^2_zf_0&+\int_0^t\mathbb{G}^{t-\tau}(z)\big[2(\partial_z\mathsf{L}^z)(\partial_zf)+(\partial^2_z\mathsf{L}^z)f\big](\tau)d\tau\bigg\|_{X_{\xi;\beta}^{x,2}}\\
&\lesssim \frac{1}{(1+t)^{\frac34}}\sum_{s=0}^{2}(\left\|\partial^s_zf_0\right\|_{X_{\xi;\beta}^{x,1}}+\left\|\partial^s_zf_0\right\|_{X_{\xi;\beta}^{x,\infty}}),\\
\bigg\|\mathbb{G}^t(z)\partial^2_zf_0&+\int_0^t\mathbb{G}^{t-\tau}(z)\big[2(\partial_z\mathsf{L}^z)(\partial_zf)+(\partial^2_z\mathsf{L}^z)f\big](\tau)d\tau\bigg\|_{X_{\xi;\beta}^{x,\infty}}\\
&\lesssim \frac{1}{(1+t)^{\frac32}}\sum_{s=0}^{2}(\left\|\partial^s_zf_0\right\|_{X_{\xi;\beta}^{x,1}}+\left\|\partial^s_zf_0\right\|_{X_{\xi;\beta}^{x,\infty}}).
\end{aligned}
\end{equation}
As for 
$$\int_0^t\mathbb{G}^{t-\tau}(z)\big[\partial^2_z\Gamma^z(f,f)+2\Gamma^z(\partial^2_zf,f)+2\Gamma^z(\partial_zf,\partial_zf)+4\partial_z\Gamma^z(\partial_zf,f)\big](\tau)d\tau,$$
considering its $L_x^2$ estimates, from \eqref{gamma} and ansatz \eqref{ansatzINL3}, it follows that 
\begin{equation}\label{bianhao2}
\begin{aligned}
&\left\|\int_0^t\mathbb{G}^{t-\tau}(z)\big[\partial^2_z\Gamma^z(f,f)+2\Gamma^z(\partial^2_zf,f)+2\Gamma^z(\partial_zf,\partial_zf)+4\partial_z\Gamma^z(\partial_zf,f)\big](\tau)d\tau\right\|_{X^{x,2}_{\xi;\beta}}\\
&\lesssim\int_0^t\frac{1}{(1+t-\tau)^{\frac34}}\bigg(\frac{1}{(1+\tau)^{\frac34}}\frac{1}{(1+\tau)^{\frac34}}+ \frac{\ln(1+\tau)}{(1+\tau)^{\frac34}}\frac{1}{(1+\tau)^{\frac34}}+\frac{1}{(1+\tau)^{\frac34}}\frac{1}{(1+\tau)^{\frac34}}\\
&\ \ +\frac{1}{(1+\tau)^{\frac34}}\frac{1}{(1+\tau)^{\frac34}}\bigg)\bigg[\sum_{s=0}^{2}(\left\|\partial^s_zf_0\right\|_{X_{\xi;\beta}^{x,1}}+\left\|\partial^s_zf_0\right\|_{X_{\xi;\beta}^{x,\infty}})\bigg]^2d\tau\\
&\lesssim\frac{\ln(1+t)}{(1+t)^{\frac34}}\bigg[\sum_{s=0}^{2}(\left\|\partial^s_zf_0\right\|_{X_{\xi;\beta}^{x,1}}+\left\|\partial^s_zf_0\right\|_{X_{\xi;\beta}^{x,\infty}})\bigg]^2.
\end{aligned}
\end{equation}
And for the $L_x^\infty$ estimates, it follows that 
\begin{equation}\label{bianhao3}
\begin{aligned}
&\left\|\int_0^t\mathbb{G}^{t-\tau}(z)\big[\partial^2_z\Gamma^z(f,f)+2\Gamma^z(\partial^2_zf,f)+2\Gamma^z(\partial_zf,\partial_zf)+4\partial_z\Gamma^z(\partial_zf,f)\big](\tau)d\tau\right\|_{X^{x,\infty}_{\xi;\beta}}\\
&\lesssim\int_0^t\frac{1}{(1+t-\tau)^{\frac32}}\bigg(\frac{1}{(1+\tau)^{\frac34}}\frac{1}{(1+\tau)^{\frac34}}+ \frac{\ln(1+\tau)}{(1+\tau)^{\frac34}}\frac{1}{(1+\tau)^{\frac34}}+\frac{1}{(1+\tau)^{\frac34}}\frac{1}{(1+\tau)^{\frac34}}\\
&\ \ +\frac{1}{(1+\tau)^{\frac34}}\frac{1}{(1+\tau)^{\frac34}}\bigg)\bigg[\sum_{s=0}^{2}(\left\|\partial^s_zf_0\right\|_{X_{\xi;\beta}^{x,1}}+\left\|\partial^s_zf_0\right\|_{X_{\xi;\beta}^{x,\infty}})\bigg]^2d\tau\\
&\lesssim\frac{\ln(1+t)}{(1+t)^{\frac32}}\bigg[\sum_{s=0}^{2}(\left\|\partial^s_zf_0\right\|_{X_{\xi;\beta}^{x,1}}+\left\|\partial^s_zf_0\right\|_{X_{\xi;\beta}^{x,\infty}})\bigg]^2.
\end{aligned}
\end{equation}
Combining \eqref{bianhao1}, \eqref{bianhao2} and \eqref{bianhao3}, we have closed the ansatz \eqref{ansatzINL2}. From Lemma \ref{TC2} and the above proof, for given $\alpha \in \mathbb{N}$, the constants $C_\alpha$ in Theorem \ref{T1} are determinded by $C_{G,k}$, $C^*_{b}$ and $\alpha$. Combining \eqref{ansatzINL}, \eqref{ansatzINL2}, \eqref{ansatzINL3} and the estimates for other higher-order derivatives, we have completed the proof of Theorem \ref{T1}.

%\section{Spectral accuracy of the gPC-SG method}\label{gPC-SG method}
\section{Spectral accuracy of the gPC-SG method}\label{gPC-SG method}
In this section, we will introduce the gPC-SG system and provide estimates for its solution, as well as the gPC error.

\subsection{The gPC based stochastic Galerkin method}\label{Galerkin method}
In this subsection, we review the gPC-SG method for solving kinetic equations with uncertainties. Take the
Boltzmann equation as an example (see \cite{g3} for more details). Let $F$ be the solution of equation (\ref{UB}) and $f$ be the solution of equation (\ref{UC}). One seeks a solution in the following form:
\begin{equation*}
\begin{aligned}
F(x,t,\xi,z)\approx \sum_{k=1}^KF_{k}(x,t,\xi)\psi_{k}(z)=:F^K(x,t,\xi,z),
\end{aligned}
\end{equation*}
\begin{equation*}\label{SG1}
\begin{aligned}
f(x,t,\xi,z)\approx \sum_{k=1}^Kf_{k}(x,t,\xi)\psi_{k}(z)=:f^K(x,t,\xi,z).
\end{aligned}
\end{equation*}
By considering the perturbation framework, we write
\begin{equation*}
F_{k}=\mathsf{M}+\sqrt{\mathsf{M}}f_{k}.
\end{equation*}
Then, by the  standard Galerkin projection $P_K$, one obtains the gPC-SG system for $f_{k}$:
\begin{equation}\label{PCSG}
 \begin{cases}
\partial_tf_{k}+\xi\cdot \nabla_xf_{k}=\mathsf{L}_{k}(f^K)+\Gamma_{k}(f^K,f^K),\ x\in \mathbb{R}^3,\\
f_{k}(x,0,\xi)=f_{k}^0(x,\xi), \\
\end{cases}
\end{equation}
for each $1\le k\le K$, with the initial data given by 
\begin{equation*}
f^0_{k}(x,\xi):=\int_{I_z}f_0(x,\xi,z)\psi_{k}(z)\pi(z)dz.
\end{equation*} 
Here, $P_K$, $\pi(z)$ and $\psi_{k}(z), 1\le k\le K$, have already been introduced in Section \ref{Main result}.
The collision parts are given by 
\begin{equation}\label{a1}
\begin{aligned}
\mathsf{L}_{k}(f^K)=-\nu_{k}(f^K)-\mathsf{K}^1_{k}(f^K)+\mathsf{K}^2_{k}(f^K),\ \nu_{k}(f^K)=\sum_{i=1}^K\nu_{ki}f_{i},
\end{aligned}
\end{equation}
\begin{equation}\label{a2}
\begin{aligned}
\mathsf{K}^1_{k}(f^K)=\sqrt{\mathsf{M}(\xi)}\sum_{i=1}^K\int_{\mathbb{R}^3\times S^2,(\xi-\xi_*)\cdot \Omega \ge0}S_{ki}|\xi-\xi_*|f_{i}(\xi_*)\sqrt{\mathsf{M}(\xi_*)}d\xi_*d\Omega,
\end{aligned}
\end{equation}
\begin{equation}\label{a3}
\begin{aligned}
\mathsf{K}^2_{k}(f^K)=\sum_{i=1}^K\int_{\mathbb{R}^3\times S^2,(\xi-\xi_*)\cdot \Omega \ge0}S_{ki}|\xi-\xi_*|(f_{i}(\xi')\sqrt{\mathsf{M}(\xi'_*)}+f_{i}(\xi'_*)\sqrt{\mathsf{M}(\xi')})\sqrt{\mathsf{M}(\xi_*)}(\xi_*)d\xi_*d\Omega,
\end{aligned}
\end{equation}
and
\begin{equation}\label{a4}
\begin{aligned}
\Gamma_{k}(f^K,f^K)=&\frac12\sum_{i,j=1}^K\int_{\mathbb{R}^3\times S^2,(\xi-\xi_*)\cdot \Omega \ge0}S^{\prime}_{kij}|\xi-\xi_*|\sqrt{\mathsf{M}(\xi_*)}\\
&\times (f_{i}(\xi')f_{j}(\xi'_*)-f_{i}(\xi)f_{j}(\xi_*))d\xi_*d\Omega,\\
\end{aligned}
\end{equation}
with
\begin{equation}\label{a5}
\begin{aligned}
S_{ki}:=\int_{I_z}b(\cos{\theta},z)\psi_{k}(z)\psi_{i}(z)\pi(z)dz, \ \ \nu_{ki}:=\int_{\mathbb{R}^3\times S^2,(\xi-\xi_*)\cdot \Omega \ge0}S_{ki}|\xi-\xi_*|\mathsf{M}(\xi_*)d\xi_*d\Omega,
\end{aligned}
\end{equation}
and
\begin{equation}\label{a6}
\begin{aligned}
S^{\prime}_{kij}:=\int_{I_z}b(\cos{\theta},z)\psi_{k}(z)\psi_{i}(z)\psi_{j}(z)\pi(z)dz.
\end{aligned}
\end{equation}

\subsection{The linear system}
Recall that the uncertain variable $z$ is one-dimensional, and $I_z$ has finite support $|z| \le C_z$.
 For the convenience of explanation, we assume the collision kernel is linear in $z$, with the form of
\begin{equation}\label{b3}
b(\cos{\theta},z)=b_0\cos{\theta}+zb_1\cos{\theta},\ |b_1|\le\gamma|b_0|,\ 0<\gamma<\frac{1}{2^m+1},
\end{equation}
here, $m$ is introduced in definition \eqref{weight}.

First of all, we consider coupled linear systems for $g_k,1\le k\le K$:
\begin{equation}\label{PCSGL}
 \begin{cases}
\partial_tg_{k}+\xi\cdot \nabla_xg_{k}=\mathsf{L}_{k}(g^K),\\
g_{k}(x,0,\xi)=g_{k}^0(x,\xi), \\
\end{cases}
\end{equation} 
here, 
$$
g^K=\sum_{k=1}^Kg_{k}(x,t,\xi)\psi_{k}(z).
$$
Through $g^K$, define a vector-valued function $\vec{g}_K(x,t,\xi)$ by
$$
\vec{g}_K=(g_1,\cdots,g_k,\cdots,g_K)^T.
$$
Rewrite the system of equation (\ref{PCSGL}) into a form satisfied by $\vec{g}_K(x,t,\xi)$, and we have
\begin{equation}\label{PCSGL1}
 \begin{cases}
\partial_t\vec{g}_{K}+\xi\cdot \nabla_x\vec{g}_{K}=\mathbf{L}(\vec{g}_{K}),\\
\vec{g}_K(x,0,\xi)=\vec{g}_{\mathrm{in},K}(x,\xi):=(g_{0}^0(x,\xi),\cdots,g_{k}^0(x,\xi),\cdots,g_{K}^0(x,\xi))^T.\\
\end{cases}
\end{equation}
Here, $\mathbf{L}=\{ \mathsf{L}_{ij}\}_{K\times K}$ is a matrix of operators, where 
\begin{equation*}
\begin{aligned}
\mathsf{L}_{ij}g_j= \int_{\mathbb{R}^3\times S^2,(\xi-\xi_*)\cdot \Omega \ge0}&S_{ij}|\xi-\xi_*|\bigg[g_{j}(\xi')\sqrt{\mathsf{M}(\xi'_*)}+g_{j}(\xi'_*)\sqrt{\mathsf{M}(\xi')}\\
&-\sqrt{\mathsf{M}(\xi_*)}g_j(\xi)-\sqrt{\mathsf{M}(\xi)}g_j(\xi_*)\bigg]\sqrt{\mathsf{M}(\xi_*)}d\xi_*d\Omega.
\end{aligned}
\end{equation*}
%and
%\begin{equation*}
%\begin{aligned}
%\mathsf{L}_{ji}g_j= \int_{\mathbb{R}^3\times S^2,(\xi-\xi_*)\cdot \Omega \ge0}&S_{ji}|\xi-\xi_*|\bigg[g_{j}(\xi')\sqrt{\mathsf{M}(\xi'_*)}+g_{j}(\xi'_*)\sqrt{\mathsf{M}(\xi')}\\
%&-\sqrt{\mathsf{M}(\xi_*)}g_j(\xi)-\sqrt{\mathsf{M}(\xi)}g_j(\xi_*)\bigg]\sqrt{\mathsf{M}(\xi_*)}d\xi_*d\Omega.
%\end{aligned}
%\end{equation*}
Note that from \eqref{a5}, $S_{ki}$ satisfies that
\begin{equation*}
\begin{aligned}
S_{ki}=&\int_{I_z}b(\cos{\theta},z)\psi_{k}(z)\psi_{i}(z)\pi(z)dz\\
=&\int_{I_z}(b_0\cos{\theta}+zb_1\cos{\theta})\psi_{k}(z)\psi_{i}(z)\pi(z)dz\\
=&b_0\cos{\theta}\int_{I_z}\psi_{k}(z)\psi_{i}(z)\pi(z)dz+b_1\cos{\theta}\int_{I_z}z\psi_{k}(z)\psi_{i}(z)\pi(z)dz.
\end{aligned}
\end{equation*}
By the properties of orthogonal polynomials, we know that $S_{ki}\ne 0$ only when $|k-i|\le 1$. Clearly, it follows that
\begin{equation*}
\begin{aligned}
&S_{kk}=b_0\cos{\theta},1\le k\le K,\\
&S_{k,k+1}= b_1\cos{\theta}, 1\le k\le K-1, \\
&S_{k,k-1}= b_1\cos{\theta}, 2\le k\le K.
\end{aligned}
\end{equation*} 
Therefore, $\mathbf{L}$ is a symmetric tridiagonal matrix and diagonally dominant.

In fact, compared to $\mathsf{L}$, $\mathsf{L}_{ij}$ satisfies
$$
\mathsf{L}_{ij}=b_{ij}\mathsf{L},%\ \text{for}\ \text{some}, b_ij
$$
here, $b_{kk}=b_0,1\le k\le K$ and $|b_{ij}|\le b_1, i\neq j$. 
 Then define coefficient matrix $\mathbf{B}$ by $\mathbf{B}=\{ b_{ij}\}_{K\times K}$, and without loss of generality, let $b_{kk}=b_0=1,1\le k\le K$. Through the above discussion, the coefficient matrix $\mathbf{B}$ is a symmetric positive definite tridiagonal matrix and 
$$
\mathbf{L}=\mathbf{B}\mathsf{L}.
$$

Next, the semigroup method is employed to solve the system (\ref{PCSGL1}).
The study of linearized Boltzmann collision operator for multiple species has been done in \cite{EJin,daus2016hypocoercivity}. They found that the essential spectrum of the linearized multi-species collision operator is very similar to the mono-species operator. And compared to the their systems, our systems are simpler. 

We will solve  system (\ref{PCSGL1}) by the semigroup method. We first provide the proof of explicit spectral-gap estimate for linear collision operator $\mathbf{L}$. The following lemma shows that there is a spectrum gap between the zero spectrum and the other spectrum for $\mathbf{L}$.
\begin{lemma}[Explicit spectral-gap estimate]\label{ESGE}
Let the collision parts satisfy assumptions (\ref{a1})-(\ref{a6}) and assume the collision kernel is linear in z. Then there exists a constant $C>0$ independent of $K$ such that
$$
(\mathbf{L}\vec{f}_K,\vec{f}_K)_\xi\le-C(\mathsf{P}_1\vec{f}_K,\mathsf{P}_1\vec{f}_K)_\xi,
$$
here, $\mathsf{P}_1\vec{f}_K=(\mathsf{P}_1f_1,\cdots,\mathsf{P}_1f_K)^T$ and $(\vec{h}_K,\vec{f}_K)_\xi=\sum_{k=1}^K(h_k,f_k)_\xi$.
\end{lemma}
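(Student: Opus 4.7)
The plan is to exploit the tensor-product structure $\mathsf{L}_{ij}=b_{ij}\mathsf{L}$ and reduce the vector-valued spectral gap to the known scalar spectral gap \eqref{SGE} for $\mathsf{L}$, via an orthogonal diagonalization of the coupling matrix $\mathbf{B}$. More precisely, I would first write out
\[
(\mathbf{L}\vec{f}_K,\vec{f}_K)_\xi=\sum_{i,j=1}^K b_{ij}(\mathsf{L}f_j,f_i)_\xi,
\]
so that the only role of $\mathbf{B}$ is through a quadratic form whose velocity part is $\mathsf{L}$. Since $\mathbf{B}$ has diagonal entries $b_{kk}=1$ and off-diagonal entries bounded in absolute value by $\gamma$ with $\gamma<1/(2^m+1)\le 1/3$, Gershgorin's theorem gives $\mathbf{B}\ge(1-2\gamma)\mathbf{I}>0$ uniformly in $K$, so $\mathbf{B}$ is symmetric positive definite with a $K$-independent lower bound on its spectrum.

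Next I would diagonalize $\mathbf{B}=\mathbf{U}\mathbf{D}\mathbf{U}^T$, with $\mathbf{U}$ orthogonal and $\mathbf{D}=\mathrm{diag}(\lambda_1,\dots,\lambda_K)$, $\lambda_k\ge 1-2\gamma$. Defining $\vec{h}_K:=\mathbf{U}^T\vec{f}_K$ with components $h_k$, the bilinear form decouples:
\[
(\mathbf{L}\vec{f}_K,\vec{f}_K)_\xi=\sum_{k=1}^K\lambda_k(\mathsf{L}h_k,h_k)_\xi.
\]
Applying the scalar spectral-gap estimate \eqref{SGE} componentwise yields
\[
(\mathbf{L}\vec{f}_K,\vec{f}_K)_\xi\le-\nu_1\sum_{k=1}^K\lambda_k(\mathsf{P}_1h_k,\mathsf{P}_1h_k)_\xi.
\]

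The final step is to translate this bound back to $\vec{f}_K$. Here I would use the crucial fact that $\mathsf{P}_1$ acts only on the velocity variable and therefore commutes with the $K\times K$ orthogonal change of basis $\mathbf{U}^T$; consequently $\mathsf{P}_1\vec{h}_K=\mathbf{U}^T(\mathsf{P}_1\vec{f}_K)$, and
\[
\sum_{k=1}^K\lambda_k(\mathsf{P}_1h_k,\mathsf{P}_1h_k)_\xi=(\mathbf{B}\,\mathsf{P}_1\vec{f}_K,\mathsf{P}_1\vec{f}_K)_\xi\ge(1-2\gamma)(\mathsf{P}_1\vec{f}_K,\mathsf{P}_1\vec{f}_K)_\xi.
\]
Setting $C:=\nu_1(1-2\gamma)$, which is strictly positive and independent of $K$ by \eqref{b3}, completes the argument.

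The only real subtlety is uniformity in $K$: it is essential that the lower bound on the smallest eigenvalue of $\mathbf{B}$ does not deteriorate as $K$ grows. This is precisely where the restriction $\gamma<1/(2^m+1)$ is used, since Gershgorin's theorem applied to the tridiagonal $\mathbf{B}$ gives the uniform bound $1-2\gamma>0$ for every $K$. Everything else is a routine simultaneous-diagonalization argument, using that $\mathsf{P}_1$ and the matrix action on indices commute.
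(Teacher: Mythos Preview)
Your proof is correct and yields the same constant $C=\nu_1(1-2\gamma)$ as the paper, but the route is genuinely different. The paper does not diagonalize $\mathbf{B}$; instead it splits $\mathbf{L}=\mathbf{L}^m+\mathbf{L}^b$ into its diagonal part $\mathbf{L}^m=b_0\mathbf{I}\mathsf{L}$ and the off-diagonal tridiagonal remainder, applies the scalar spectral gap \eqref{SGE} directly to $\mathbf{L}^m$, and then absorbs the off-diagonal contributions by an AM--GM/Cauchy--Schwarz inequality on the explicit $H$-theorem representation of $(\mathsf{L}_{k,k\pm1}f_k,f_{k\pm1})_\xi$ (their inequality \eqref{fangda}). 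Your argument is cleaner in that it treats $\mathbf{L}=\mathbf{B}\mathsf{L}$ as a genuine tensor-product operator: the orthogonal change of basis decouples the system, and the $K$-uniform lower bound on $\mathbf{B}$ falls out of Gershgorin without ever touching the collision integral. The paper's approach, by contrast, is more hands-on and avoids any matrix spectral theory, which has the advantage of making the later weighted version (where $\mathbf{B}$ is replaced by the non-symmetric $\mathbf{W}\mathbf{B}\mathbf{W}^{-1}$) an almost verbatim repetition of the same elementary inequality; your diagonalization argument would need a small modification there since $\mathbf{W}\mathbf{B}\mathbf{W}^{-1}$ is no longer symmetric.
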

\begin{proof}
Decompose $\mathbf{L}=\mathbf{L}^m+\mathbf{L}^b$, where $\mathbf{L}^m=b_0\mathbf{I}$.
%(\mathsf{L}^m_1,...,\mathsf{L}^m_{k},...,\mathsf{L}^m_K)$, $\mathbf{L}^b=(\mathsf{L}^b_1,...,\mathsf{L}^b_{k},...,\mathsf{L}^b_K)$, and 
%\begin{equation*}
%\mathsf{L}^m_{k}=\mathsf{L}_{kk}, \quad \mathsf{L}^b_{k}=\sum_{j\ne k}\mathsf{L}_{kj}.
%\end{equation*}
By (\ref{SGE}) and $b_0=1$, we have 
\begin{equation*}
(f_k,\mathsf{L}_{kk}f_k)_\xi\le-\nu_1(\mathsf{P}_1f_k,\mathsf{P}_1f_k)_\xi,\quad  \text{for}\quad 1\le k \le K,
\end{equation*}
and sum them over $k=1,...,K$, so it follows
\begin{equation}\label{GPCM}
(\mathbf{L}^m\vec{f}_K,\vec{f}_K)_\xi\le-\nu_1(\mathsf{P}_1\vec{f}_K,\mathsf{P}_1\vec{f}_K)_\xi.
\end{equation}
Note that
\begin{equation*}
\begin{aligned}
(\mathsf{L}_{ij}f_i,f_j)_\xi=-\frac{1}{4} \int_{\mathbb{R}^6\times S^2,(\xi-\xi_*)\cdot \Omega \ge0}&\mathsf{M}\mathsf{M}_*(-\frac{f_i}{\sqrt{\mathsf{M}}}-\frac{f_{i,*}}{\sqrt{\mathsf{M}_*}}+\frac{f_i'}{\sqrt{\mathsf{M}'}}+\frac{f_{i,*}^{\prime}}{\sqrt{\mathsf{M}^{\prime}_*}})\\
&\times(-\frac{f_j}{\sqrt{\mathsf{M}}}-\frac{f_{j,*}}{\sqrt{\mathsf{M}_*}}+\frac{f_j'}{\sqrt{\mathsf{M}'}}+\frac{f_{j,*}^{\prime}}{\sqrt{\mathsf{M}^{\prime}_*}}) S_{ij}|\xi-\xi_*|d\xi_*d\Omega d\xi,\\
\end{aligned}
\end{equation*}
with the abbreviations $\phi_*=\phi(\xi_*),\phi^\prime=\phi(\xi^\prime),\phi^{\prime}_*=\phi(\xi^\prime_*)$.
For $1\le k\le K-1$, it follows 
\begin{equation*}
\begin{aligned}
(\mathsf{L}_{kk}f_k,f_k)_\xi=-\frac{1}{4} \int_{\mathbb{R}^6\times S^2,(\xi-\xi_*)\cdot \Omega \ge0}&\mathsf{M}\mathsf{M}_*(-\frac{f_k}{\sqrt{\mathsf{M}}}-\frac{f_{k,*}}{\sqrt{\mathsf{M}_*}}+\frac{f_k'}{\sqrt{\mathsf{M}'}}+\frac{f_{k,*}^{\prime}}{\sqrt{\mathsf{M}^{\prime}_*}})\\
&\times(-\frac{f_k}{\sqrt{\mathsf{M}}}-\frac{f_{k,*}}{\sqrt{\mathsf{M}_*}}+\frac{f_k'}{\sqrt{\mathsf{M}'}}+\frac{f_{k,*}^{\prime}}{\sqrt{\mathsf{M}^{\prime}_*}}) b_0|\xi-\xi_*|d\xi_*d\Omega d\xi,\\
\end{aligned}
\end{equation*}
and 
\begin{equation*}
\begin{aligned}
|(\mathsf{L}_{k,k+1}f_k,f_{k+1})_\xi|\le\frac{1}{4} \bigg|\int_{\mathbb{R}^6\times S^2,(\xi-\xi_*)\cdot \Omega \ge0}&\mathsf{M}\mathsf{M}_*(-\frac{f_k}{\sqrt{\mathsf{M}}}-\frac{f_{k,*}}{\sqrt{\mathsf{M}_*}}+\frac{f_k'}{\sqrt{\mathsf{M}'}}+\frac{f_{k,*}^{\prime}}{\sqrt{\mathsf{M}^{\prime}_*}})\\
&\times(-\frac{f_{k+1}}{\sqrt{\mathsf{M}}}-\frac{f_{k+1,*}}{\sqrt{\mathsf{M}_*}}+\frac{f_{k+1}'}{\sqrt{\mathsf{M}'}}+\frac{f_{k+1,*}^{\prime}}{\sqrt{\mathsf{M}^{\prime}_*}}) b_1|\xi-\xi_*|d\xi_*d\Omega d\xi\bigg|.\\
\end{aligned}
\end{equation*}
By the arithmetic mean inequality and (\ref{b3}), it follows 
\begin{equation}\label{fangda}
\begin{aligned}
(\mathsf{L}_{kk}f_k,f_k)_\xi+(\mathsf{L}_{k+1,k+1}f_{k+1},f_{k+1})_\xi+\frac{1}{\gamma}|(\mathsf{L}_{k,k+1}f_k,f_{k+1})_\xi|+\frac{1}{\gamma}|(\mathsf{L}_{k+1,k}f_{k+1},f_{k})_\xi|\le0.
\end{aligned}
\end{equation}
%Similarly, for $2\le k\le K$, it follows
%\begin{equation}\label{fangda2}
%\begin{aligned}
%(\mathsf{L}_{kk}f_k,f_k)_\xi+(\mathsf{L}_{k-1,k-1}f_{k-1},f_{k-1})_\xi+\frac{1}{\alpha}|(\mathsf{L}_{k,k-1}f_k,f_{k-1})_\xi|+\frac{1}{\alpha}|(\mathsf{L}_{k-1,k}f_{k-1},f_{k})_\xi|\le0.
%\end{aligned}
%\end{equation}
Since $S_{ij}=0$ when $|i-j|>1$, by (\ref{GPCM}) and (\ref{fangda}), we have 
\begin{equation*}
\begin{aligned}
(\mathbf{L}\vec{f}_K,\vec{f}_K)_\xi&=(\mathbf{L}^m\vec{f}_K,\vec{f}_K)_\xi+(\mathbf{L}^b\vec{f}_K,\vec{f}_K)_\xi\\
&=(1-2\gamma)(\mathbf{L}^m\vec{f}_K,\vec{f}_K)_\xi+2\gamma(\mathbf{L}^m\vec{f}_K,\vec{f}_K)_\xi+(\mathbf{L}^b\vec{f}_K,\vec{f}_K)_\xi\\
&\le(1-2\gamma)(\mathbf{L}^m\vec{f}_K,\vec{f}_K)_\xi+\sum_{k=1}^{K-1}\gamma(\mathsf{L}_{kk}f_k,f_k)_\xi+\gamma(\mathsf{L}_{k+1,k+1}f_{k+1},f_{k+1})_\xi+2|(\mathsf{L}_{k,k+1}f_k,f_{k+1})_\xi|\\
&\le-(1-2\gamma)\nu_1(\mathsf{P}_1\vec{f}_K,\mathsf{P}_1\vec{f}_K)_\xi.\\
\end{aligned}
\end{equation*}

Let $C=(1-2\gamma)\nu_1$. Since $\gamma$ in (\ref{b3}) and $\nu_1$ in (\ref{SGE}) are independent of $K$, $C$ is also independent of $K$.
We have finished the proof.
\end{proof}
Next we study the semigroup $e^{\left( -i\xi\cdot \eta \mathbf{I}+\mathbf{L}\right) t}$. Let $(\sigma\left( \eta \right),\vec{e}\left( \eta \right))$ be the eigenvalue-eigenfunctions for $-i\xi\cdot \eta\mathbf{I} +\mathbf{L}$:
$$
(-i\xi\cdot \eta\mathbf{I} +\mathbf{L})\vec{e}(\eta)=\sigma (\eta )\vec{e}(\eta). 
$$
By Lemma \ref{ESGE}, the spectral gap for $\mathbf{L}$ still exists, so following the Lemma 5.4 in \cite{liu2011solving}, we can computate the eigenvalues and eigenfunctions of $-i\xi\cdot \eta\mathbf{I} +\mathbf{L}$. The following lemma concerns the spectrum information and the detail proof is oimtted.
\begin{lemma}\label{SPS}
Consider the spectrum $\mathrm{Spec}(\eta)$ of the operator $-i\xi\cdot \eta\mathbf{I} +\mathbf{L}$, $\eta \in \mathbb{R}^3$. The following statement $(\mathrm{I})-(\mathrm{II})$ are true.

$(\mathrm{I})$ For any $0<\delta\ll 1$, there corresponds $\tau=\tau(\delta)>0$ such that

\ \ \ $(\mathrm{i})$ For $|\eta|>\delta$,
$$
\mathrm{Spec}(\eta)\subset\{z\in\mathbb{C}:\mathrm{Re}(z)<-\tau\}.
$$

 \,  \ $(\mathrm{ii})$ For $|\eta|\le\delta$, the spectrum within the region $\{z\in\mathbb{C}:\mathrm{Re}(z)\ge-\tau\}$ consisting of exactly five eigenvalues $\sigma _{j}\left( \eta \right)\ (0\leq j\leq 4)$:
$$
\mathrm{Spec}(\eta)\cap\{z\in\mathbb{C}:\mathrm{Re}(z)\ge-\tau\}=\{\sigma_1(\eta),\sigma_2(\eta),\sigma_3(\eta),\sigma_4(\eta),\sigma_5(\eta)\}.
$$

$(\mathrm{II})$ For $\left \vert\eta \right \vert \ll 1$, the five
eigenvalues $\sigma _{j}\left( \eta \right) \ (0\leq j\leq 4)$ associated
with the corresponding eigenfunctions $\vec{e}_{j}\left( \eta \right)$ satisfy 
\begin{equation*}
\displaystyle \sigma _{j}(\eta )=-i\,a_{j}|\eta |-A_{j}|\eta |^{2}+O(|\eta
|^{3}), \  0\leq j\leq 4.
\end{equation*}%
Let $\mu_k>0(1\le k\le K)$ be the eigenvalues of the coefficient matrix $\mathbf{B}$ and $\vec{v}_k$ are the corresponding unit eigenvectors, then the eigenfunctions  $\vec{e}_{j}\left( \eta \right)$ satisfy that
$$
\vec{e}_{j}\left( \eta \right)=\alpha_1\vec{v}_1e^1_j+\cdots+\alpha_k\vec{v}_ke^k_j+\cdots+\alpha_K\vec{v}_Ke^K_j,\ 1\le k \le K.
$$
Here, for $1\le k \le K$, $\alpha_k\in \mathbb{R}$, $e^k_j(\eta)$ can be solved by 
$$
(-i\xi\cdot \eta +\mu_k\mathsf{L})e^k_j(\eta)=\sigma _{j}(\eta )e^k_j(\eta), \ (e^k_j(\eta),e^k_j(\eta))_\xi=1,
$$
and
\begin{equation*}
\displaystyle e^k_{j}(\eta )=E_{j}+O(|\eta |)\text{.}%
\end{equation*}
Here $A_{j}>0$, $0\leq j\leq 4$ and
\begin{equation*}
\left \{
\begin{array}{l}
a_{0}=\sqrt{\frac{5}{3}}\text{,}\quad a_{1}=-\sqrt{\frac{5}{3}}\text{,}\,
\quad a_{2}=a_{3}=a_{4}=0\text{,} \\[2mm]
E_{0}=\sqrt{\frac{3}{10}}\chi _{0}+\sqrt{\frac{1}{2}}\om \cdot \overline{%
\chi }+\sqrt{\frac{1}{5}}\chi _{4}\text{,} \\[2mm]
E_{1}=\sqrt{\frac{3}{10}}\chi _{0}-\sqrt{\frac{1}{2}}\om \cdot \overline{%
\chi }+\sqrt{\frac{1}{5}}\chi _{4}\text{,} \\[2mm]
E_{2}=-\sqrt{\frac{2}{5}}\chi _{0}+\sqrt{\frac{3}{5}}\chi _{4}\, \text{,} \\%
[2mm]
E_{3}=\om_{1}\cdot \overline{\chi }\text{,} \\[2mm]
E_{4}=\om_{2}\cdot \overline{\chi }\text{,}%
\end{array}%
\right.
\end{equation*}%
where $\overline{\chi }=(\chi _{1},\chi _{2},\chi _{3})$, $\eta =\left \vert
\eta \right \vert \om$ ($\om \in S^{2}$) and $\{ \om_{1},\om_{2},\om \}$ is
an orthonormal basis of ${\mathbb{R}}^{3}$.
\end{lemma}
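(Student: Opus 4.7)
The plan is to reduce the spectral problem to the mono-species case by a unitary decoupling, and then invoke the classical Ellis-Pinsky / Ukai analysis blockwise. The key observation is that the factorization $\mathbf{L} = \mathbf{B}\mathsf{L}$, combined with the fact that $\mathsf{L}$ acts componentwise on the species index, permits a complete decoupling. Since $\mathbf{B}$ is symmetric positive definite (it is tridiagonal with unit diagonal and off-diagonals bounded by $\gamma < 1/(2^m+1)$, and Lemma \ref{ESGE} already exploited its positivity), I would pick an orthogonal $\mathbf{Q}$ whose columns are the unit eigenvectors $\vec{v}_k$, so that $\mathbf{Q}^T \mathbf{B}\mathbf{Q} = \mathrm{diag}(\mu_1,\ldots,\mu_K)$. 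In the coordinates $\vec{g} = \mathbf{Q}^T \vec{f}$ the operator $-i\xi\cdot\eta\,\mathbf{I} + \mathbf{L}$ is unitarily equivalent to the block-diagonal operator $\bigoplus_{k=1}^K(-i\xi\cdot\eta + \mu_k\mathsf{L})$. Moreover, applying Gershgorin's circle theorem to $\mathbf{B}$ gives $\mu_k \in [1-2\gamma, 1+2\gamma]$ uniformly in $K$, which will be essential for uniform constants.

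For part (I)(i), I would transplant the classical spectral-gap argument to each decoupled block. Using Grad's splitting $\mathsf{L} = -\nu + \mathsf{K}$ with $\mathsf{K}$ compact on $L^2_\xi$, together with the mono-species gap $(\mathsf{L}g, g)_\xi \le -\nu_1(\mathsf{P}_1 g, \mathsf{P}_1 g)_\xi$ in \eqref{SGE}, a standard resolvent / Weyl-sequence argument shows that for any fixed $\delta > 0$ there exists $\tau_k(\delta) > 0$ with $\mathrm{Spec}(-i\xi\cdot\eta + \mu_k\mathsf{L}) \subset \{\mathrm{Re}(z) < -\tau_k\}$ whenever $|\eta| > \delta$. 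The uniform two-sided bound on $\mu_k$ lets me take $\tau := \min_k \tau_k > 0$ independently of $K$, giving (I)(i).

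For (I)(ii) and (II), the expansion near $\eta = 0$ comes from analytic perturbation theory applied to each block. The unperturbed operator $\mu_k\mathsf{L}$ has the five-dimensional kernel $\mathrm{span}\{\chi_0,\ldots,\chi_4\}$, separated from the rest of its spectrum by a gap of order $\mu_k\nu_1$. For $|\eta|$ small enough the transport term $-i\xi\cdot\eta$ is a relatively bounded perturbation, and Kato-Rellich theory produces five analytic eigenvalue branches $\sigma^{(k)}_j(\eta)$ emerging from the origin, with eigenfunctions $e^k_j(\eta) = E_j + O(|\eta|)$. The leading symbols are obtained by diagonalizing the restriction of $-i\xi\cdot\eta$ to the kernel: the matrix $((\xi\cdot\eta)\chi_i,\chi_j)_\xi$ yields precisely the sound speeds $\pm\sqrt{5/3}\,|\eta|$ and the triple root $0$ for the incompressible modes, independent of $\mu_k$ since the transport sees no collisional information at this order. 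The $O(|\eta|^2)$ Chapman-Enskog corrections, computed via the pseudoinverse $(\mu_k\mathsf{L})^{-1}$ on $\mathrm{Ran}(\mathsf{P}_1)$, produce real negative $A^{(k)}_j > 0$. Finally, pulling the block eigenfunctions back through $\mathbf{Q}$ gives the stated decomposition $\vec{e}_j(\eta) = \sum_k \alpha_k \vec{v}_k e^k_j(\eta)$.

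The main obstacle I expect is the uniform-in-$K$ control of every implicit constant: the gap $\tau(\delta)$, the radius in $\eta$ in which the five-branch picture persists, and the analyticity radius of each perturbation series. All of these trace back to the uniform enclosure $\mu_k \in [1-2\gamma, 1+2\gamma]$, which is precisely the role played by the smallness assumption $\gamma < 1/(2^m+1)$ in \eqref{b3}. Once this uniform control is in place, one further subtlety is that the displayed list of five eigenvalues $\sigma_j(\eta)$ really represents five families indexed by the block label $k$ as well, so that the near-zero spectrum of the full operator is the union of the five branches from each of the $K$ blocks; the common transport speeds $a_j$ and leading eigenfunctions $E_j$ are shared across blocks, while the second-order coefficients depend on $\mu_k$ through the Chapman-Enskog computation. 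With these observations in hand, the lemma follows from the well-established mono-species spectral analysis applied blockwise and recombined via $\mathbf{Q}$.
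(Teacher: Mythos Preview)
Your proposal is correct and follows essentially the same approach as the paper. The paper itself omits the detailed proof, stating only that the spectral gap from Lemma~\ref{ESGE} together with the classical analysis (Lemma~5.4 of \cite{liu2011solving}) gives the result; the decoupling via the eigenvectors $\vec{v}_k$ of $\mathbf{B}$ that you carry out is precisely what is encoded in the statement of the lemma, and your Gershgorin bound $\mu_k\in[1-2\gamma,1+2\gamma]$ is exactly what is needed for the constants to be uniform in $K$. Your observation that the near-zero spectrum really consists of $5K$ branches (five per block, with common first-order symbols $a_j$ but $\mu_k$-dependent second-order coefficients) is a valid clarification of a point the paper's statement leaves somewhat implicit.
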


Following the computations in Section 7 of \cite{liu2011solving}, by Lemma \ref{SPS} we can still provide pointwise estimates for the solution operator of system \eqref{PCSGL1}. Here, we omit the calculation procedure.
Therefore, we solve the system (\ref{PCSGL1}) of the linearized Boltzmann equation using the semigroup method and obtain the following lemma:
\begin{lemma}\label{PCSGTL} 
Let $\mathbf{G}^t$ be the solution operator of system \eqref{PCSGL1}. For given $b_0$ and $b_1$ in \eqref{b3}, there exists a positive constant $C_{G,S}$ independent of $K$, such that if $\beta> 3/2$, and $1\le p\le 2\le r \le \infty$, then $\mathbb{G}^t$ satisfies that for some $C>0$, 
\begin{equation*}
\begin{aligned}
\left\|\mathbf{G}^t\vec{g}_{\mathrm{in},K}\right\|_{L_x^r(L^{2}_{\xi})}&\le\frac{C_{G,S}}{(1+t)^{\frac32(\frac{1}{p}-\frac{1}{r})}}\left\|\vec{g}_{\mathrm{in},K}\right\|_{L_x^p(L^2_{\xi})}\\
&\ \ +C_{G,S}e^{-t/C}(\left\|\vec{g}_{\mathrm{in},K}\right\|_{L^{2}_{\xi}(L_x^2)}+\left\|\vec{g}_{\mathrm{in},K}\right\|_{L^{2}_{\xi}(L_x^2)}^{2/r}\left\|\vec{g}_{\mathrm{in},K}\right\|_{L^{2}_{\xi}(L_x^{\infty})}^{1-2/r}),
\end{aligned}
\end{equation*}
\begin{equation*}
\begin{aligned}
\left\|\mathbf{G}^t\vec{g}_{\mathrm{in}.K}\right\|_{L^{\infty}_{\xi,\beta}(L_x^{r})}&\le\frac{C_{G,S}}{(1+t)^{\frac32(\frac{1}{p}-\frac{1}{r})}}\left\|\vec{g}_{\mathrm{in},K}\right\|_{L_x^p(L^2_{\xi})}\\
&\ \ +C_{G,S}e^{-t/C}(\left\|\vec{g}_{\mathrm{in},K}\right\|_{L^{\infty}_{\xi,\beta}(L_x^{r})}+\left\|\vec{g}_{\mathrm{in},K}\right\|_{L^{2}_{\xi}(L_x^2)}+\left\|\vec{g}_{\mathrm{in},K}\right\|_{L^{2}_{\xi}(L_x^2)}^{2/r}\left\|\vec{g}_{\mathrm{in},K}\right\|_{L^{2}_{\xi}(L_x^{\infty})}^{1-2/r}).
\end{aligned}
\end{equation*}
 Moreover, if $\mathsf{P}_0\vec{g}_{\mathrm{in},K}=\vec{0}$, then we will get extra $(1+t)^{-1/2}$ decay rate for each estimate above.
\end{lemma}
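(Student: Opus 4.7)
The plan is to exploit the product structure $\mathbf{L}=\mathbf{B}\mathsf{L}$ identified just before the statement of Lemma \ref{PCSGTL} to reduce the vector-valued system \eqref{PCSGL1} to $K$ decoupled scalar linearized Boltzmann equations, each of which falls under the scope of Theorem \ref{UIf} with constants independent of $K$.

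Concretely, since $\mathbf{B}$ is symmetric positive definite, I would diagonalize it as $\mathbf{B}=\mathbf{V}\mathbf{D}\mathbf{V}^{T}$ with $\mathbf{V}$ orthogonal and $\mathbf{D}=\mathrm{diag}(\mu_{1},\ldots,\mu_{K})$. Setting $\vec{h}_{K}:=\mathbf{V}^{T}\vec{g}_{K}$ turns \eqref{PCSGL1} into the componentwise decoupled system
$$\partial_{t}h_{k}+\xi\cdot\nabla_{x}h_{k}=\mu_{k}\mathsf{L}h_{k},\qquad 1\le k\le K,$$
so that the solution operator factors as $\mathbf{G}^{t}=\mathbf{V}\,\mathrm{diag}(\mathbb{G}_{\mu_{1}}^{t},\ldots,\mathbb{G}_{\mu_{K}}^{t})\,\mathbf{V}^{T}$, where $\mathbb{G}_{\mu}^{t}$ denotes the semigroup generated by $-\xi\cdot\nabla_{x}+\mu\mathsf{L}$. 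The crucial point is that the eigenvalues $\mu_{k}$ are uniformly bounded: $\mathbf{B}$ is tridiagonal with $b_{kk}=1$ and off-diagonal entries of size at most $\gamma<1/(2^{m}+1)<1/2$, so Gershgorin's theorem gives $\mu_{k}\in[1-2\gamma,1+2\gamma]$ uniformly in $K$. Because $\mu_{k}\mathsf{L}$ has the same null space as $\mathsf{L}$ and a spectral gap that is just a bounded positive multiple of the one for $\mathsf{L}$, Theorem \ref{UIf} applies to each $\mathbb{G}_{\mu_{k}}^{t}$ with decay constants that do not depend on $k$ or $K$.

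Next I would recombine. Since $\mathbf{V}$ is orthogonal and the vector norms on $L_{\xi}^{2}$, $L_{x}^{p}(L_{\xi}^{2})$, and $L_{\xi,\beta}^{\infty}(L_{x}^{r})$ are all built from the Euclidean norm in the $K$-index, the change of variable $\vec{g}_{K}\mapsto\mathbf{V}^{T}\vec{g}_{K}$ preserves every norm in the statement. Applying Theorem \ref{UIf} componentwise to $\vec{h}_{K}$ and then transforming back therefore yields exactly the claimed $L_{x}^{p}\text{-}L_{x}^{r}$ bounds for $\mathbf{G}^{t}\vec{g}_{\mathrm{in},K}$, with a constant $C_{G,S}$ determined only by $\gamma$ (equivalently by $b_{0},b_{1}$) and the fixed constants in Theorem \ref{UIf}. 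The extra $(1+t)^{-1/2}$ decay under $\mathsf{P}_{0}\vec{g}_{\mathrm{in},K}=\vec{0}$ carries through as well, since $\mathsf{P}_{0}$ acts componentwise in the $K$-index and hence commutes with $\mathbf{V}^{T}$, so $\mathsf{P}_{0}\vec{h}_{K}=\vec{0}$ componentwise and the microscopic case of Theorem \ref{UIf} applies to each scalar $h_{k}$.

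The main obstacle is essentially bookkeeping: verifying that the constants produced by Theorem \ref{UIf} depend only on the spectral gap of $\mathsf{L}$, the weight $\beta$, and the bounded range of $\mu_{k}$, so that no hidden $K$-dependence creeps in through the high-frequency or Ukai-bootstrap steps. Alternatively, following the hint preceding the statement, one can bypass diagonalization and work directly with the spectrum of $-i\xi\cdot\eta\,\mathbf{I}+\mathbf{L}$ supplied by Lemma \ref{SPS}, splitting into the low-frequency regime $|\eta|\le\delta$ (the five eigenvalues with fluid-type expansions $\sigma_{j}(\eta)=-ia_{j}|\eta|-A_{j}|\eta|^{2}+O(|\eta|^{3})$) and the high-frequency regime $|\eta|>\delta$ (exponential decay by the spectral gap of Lemma \ref{ESGE}), then repeating the fluid/kinetic decomposition and Ukai bootstrap of Theorem \ref{UIf} using Lemma \ref{K} applied entrywise to $\mathbf{L}$. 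Either route produces the stated estimates, and uniformity in $K$ is guaranteed by the $K$-independent spectral gap of Lemma \ref{ESGE}.
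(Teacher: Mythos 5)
Your decoupling strategy is in fact very close to what the paper does implicitly: Lemma \ref{SPS} already diagonalizes the coefficient matrix $\mathbf{B}$ through its eigenpairs $(\mu_k,\vec v_k)$ and reduces the spectral problem to the scalar operators $-i\xi\cdot\eta+\mu_k\mathsf{L}$, after which the paper invokes the Green's function computations of Section 7 of \cite{liu2011solving} (together with the $K$-independent spectral gap of Lemma \ref{ESGE}). Your scalar ingredients are sound: Gershgorin gives $\mu_k\in[1-2\gamma,1+2\gamma]$ with $1-2\gamma>0$ uniformly in $K$; $\mu_k\mathsf{L}$ is again a hard-sphere linearized operator whose kernel satisfies \eqref{b1} with uniformly modified constants, so the scalar estimates of Theorem \ref{UIf} (in the uniform form of Lemma \ref{UCf}) apply with constants independent of $k$ and $K$; and $\mathsf{P}_0$ acts only in $\xi$, hence commutes with constant matrices in the $K$-index.

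The genuine gap is the recombination step, namely the assertion that the orthogonal change of variables ``preserves every norm in the statement.'' That is true only if the vector norms are read as mixed norms of the pointwise Euclidean norm in the $K$-index; the paper, however, defines and uses them as $\ell^2$-sums over $k$ of the componentwise norms (see the explicit expansion $\|\mathbf{W}\vec\Gamma_K\|^2_{L^\infty_{\xi,\beta}(L^2_x)}=\sum_k\|k^m\Gamma_k\|^2_{L^\infty_{\xi,\beta}(L^2_x)}$ in the proof of Lemma \ref{Gamma1}). Under that convention a constant orthogonal matrix is an isometry only when the component space is Hilbert, i.e.\ for $L^2_\xi(L^2_x)$-type quantities; for $L^\infty_{\xi,\beta}(L^r_x)$ or $L^2_\xi(L^\infty_x)$ it is not. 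Already for $K=2$, the rotation by $\pi/4$ applied to the pair $u_1=\tfrac1{\sqrt2}(\phi_1+\phi_2)$, $u_2=\tfrac1{\sqrt2}(\phi_1-\phi_2)$, with $\phi_1,\phi_2$ disjointly supported and of unit sup-norm, returns $(\phi_1,\phi_2)$ and thus increases the $\ell^2_K(L^\infty)$ norm by the factor $\sqrt2$; for a general $K\times K$ orthogonal $\mathbf V$ (in particular the sine eigenvectors of the tridiagonal Toeplitz $\mathbf B$) the naive bound is of order $\sqrt K$, and you must apply both $\mathbf V^T$ to the data and $\mathbf V$ to the solution. This reintroduces exactly the $K$-dependence that Lemma \ref{PCSGTL} is designed to exclude. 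The repair is to use the diagonalization only for the Hilbertian $L^2_\xi$-based bounds and to obtain the weighted $L^\infty_{\xi,\beta}$ estimates by running the fluid/kinetic decomposition and the Ukai bootstrap directly at the vector level: Lemma \ref{K} applies entrywise, and since $\mathbf B-\mathbf I$ is tridiagonal with entries bounded by $\gamma$, the matrix damping $e^{-\nu(\xi)\mathbf B t}$ decays like $e^{-(1-2\gamma)\nu(\xi)t}$ uniformly in $K$ in any componentwise norm. This is essentially the route the paper sketches via Lemmas \ref{ESGE} and \ref{SPS} and the computations of \cite{liu2011solving}, and which you mention only as an alternative; as written, your main argument does not yet establish the $K$-uniform $L^\infty_{\xi,\beta}(L^r_x)$ and $L^2_\xi(L^\infty_x)$ bounds.
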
 

In order to close the the nonlinear problem, weighted estimates are needed. Recall that the weight matrix $\mathbf{W}$ is given in \eqref{weight} with 
$$
m>n+1.
$$ 
Next, we state that the solution operator $\mathbf{G}^t$ also satisfies the weighted estimates. Note that $\mathbf{W}\vec{g}_K$ is the solution of the following system:
\begin{equation}\label{PCSGL3}
 \begin{cases}
\partial_t\mathbf{W}\vec{g}_K+\xi\cdot \nabla_x\mathbf{W}\vec{g}_K=\mathbf{W}\mathbf{L}\mathbf{W}^{-1}(\mathbf{W}\vec{g}_K),\\
\mathbf{W}\vec{g}_K(x,0,\xi)=\mathbf{W}\vec{g}_{\mathrm{in},K}(x,\xi)=(g_{0}^0(x,\xi),\cdots,k^mg_{k}^0(x,\xi),\cdots,K^mg_{K}^0(x,\xi))^T,\\
\end{cases}
\end{equation}
here, $\mathbf{W}\mathbf{L}\mathbf{W}^{-1}=\mathbf{W}\mathbf{B}\mathbf{W}^{-1}\mathsf{L}=:\mathbf{B}'\mathsf{L}$,
and $\mathbf{B}'=\{ b'_{ij}\}_{K\times K}$ with
$$
b'_{ij}=\frac{i^m}{j^m}b_{ij}.
$$
The difference between $\mathbf{W}\mathbf{L}\mathbf{W}^{-1}$ and $\mathbf{L}$ lies in the fact that $\mathbf{B}'$ is not a symmetric matrix and $\mathbf{W}\mathbf{L}\mathbf{W}^{-1}$ is not a self-adjoint operator. Let $\rho(\mathbf{L})$ be the resolvent set of $\mathbf{L}$. Considering the resolvent operator of $\mathbf{L}$, we obtain that for any $\lambda \in \rho(\mathbf{L})$, $(\mathbf{L}-\lambda \mathbf{I})^{-1}$ is bounded and $(\mathbf{W}\mathbf{L}\mathbf{W}^{-1}-\lambda \mathbf{I})^{-1}=\mathbf{W}(\mathbf{L}-\lambda \mathbf{I})^{-1}\mathbf{W}^{-1}$ is also bounded. Thus $\mathbf{W}\mathbf{L}\mathbf{W}^{-1}$ and $\mathbf{L}$ share the same spectrum.

To ensure that the boundedness of $(\mathbf{W}\mathbf{L}\mathbf{W}^{-1}-\lambda \mathbf{I})^{-1}$ is independent of $K$, we still need to guarantee that the explicit spectral-gap estimate for $\mathbf{W}\mathbf{L}\mathbf{W}^{-1}$ is also independent of $K$.
Note that inequality \eqref{fangda} is key to proving the explicit spectral-gap estimate for $\mathsf{L}$, and this relies on the property of diagonal dominance of $\mathbf{B}$. For $1\le k \le K-1$, we have 
$$
|b'_{k,k+1}|+|b'_{k+1,k}|\le(\frac{k}{k+1})^m|b_{k,k+1}|+(\frac{k+1}{k})^m|b_{k+1,k}|\le(2^m+1)|b_1|,
$$
and \eqref{fangda} still holds for the operator $\mathbf{W}\mathbf{L}\mathbf{W}^{-1}$. Therefore the explicit spectral-gap estimate for the operator $\mathbf{W}\mathbf{L}\mathbf{W}^{-1}$ still exists and it is independent of $K$.
The coefficient matrix $\mathbf{B}'$  is similar to $\mathbf{B}$, so they share the same eigenvalues. Therefore, the solution operator of the system  \eqref{PCSGL3} satisfies the same estimate as $\mathbf{G}^t$.
By applying the result of Lemma \ref{PCSGTL} 
to solve the system \eqref{PCSGL3}, we obtain the following corollary for the weighted estimate:
\begin{corollary}\label{PCSGTL1}
For $\beta> 3/2$, $\mathbf{G}^t$ satisfies that
\begin{equation*}
\begin{aligned}
\left\|\mathbf{W}\mathbf{G}^t\vec{g}_{\mathrm{in},K}\right\|_{L^{\infty}_{\xi,\beta}(L_x^2)}\le C_{G,S}\left\|\mathbf{W}\vec{g}_{\mathrm{in},K}\right\|_{L^{\infty}_{\xi,\beta}(L_x^{2})},
\end{aligned}
\end{equation*}
\begin{equation*}
\begin{aligned}
\left\|\mathbf{W}\mathbf{G}^t\vec{g}_{\mathrm{in},K}\right\|_{L^{\infty}_{\xi,\beta}(L_x^{\infty})}\le C_{G,S}(1+t)^{-\frac34}(\left\|\mathbf{W}\vec{g}_{\mathrm{in},K}\right\|_{L^{\infty}_{\xi,\beta}(L_x^\infty)}+\left\|\mathbf{W}\vec{g}_{\mathrm{in},K}\right\|_{L^{\infty}_{\xi,\beta}(L_x^{2})}).
\end{aligned}
\end{equation*}
 Moreover, if $\mathsf{P}_0\vec{g}_{\mathrm{in},K}=\vec{0}$, then we will get extra $(1+t)^{-1/2}$ decay rate for each estimate above.
\end{corollary}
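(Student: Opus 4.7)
The strategy is to reduce the weighted claim for $\mathbf{G}^t$ to the unweighted Lemma \ref{PCSGTL} applied to the similarity-transformed system \eqref{PCSGL3}. The key observation, already laid out in the discussion preceding the corollary, is that $\mathbf{W}\vec{g}_K(t)$ solves \eqref{PCSGL3} with collision operator $\mathbf{W}\mathbf{L}\mathbf{W}^{-1} = \mathbf{B}'\mathsf{L}$, that $\mathbf{B}'$ is similar to $\mathbf{B}$ (hence has the same positive eigenvalues $\mu_k$), and that the diagonal-dominance inequality used in Lemma \ref{ESGE} survives the weighting: for $1\le k\le K-1$ one has $|b'_{k,k+1}| + |b'_{k+1,k}| \le (2^m + 1)|b_1|$, which together with the constraint $\gamma < 1/(2^m + 1)$ from \eqref{b3} yields the explicit spectral-gap estimate for $\mathbf{W}\mathbf{L}\mathbf{W}^{-1}$ with a constant independent of $K$. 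Consequently, the full spectral decomposition in Lemma \ref{SPS} and the resulting pointwise Green's function estimates, which drive Lemma \ref{PCSGTL}, apply verbatim to the solution operator $\mathbf{G}_{\mathbf{W}}^t$ of \eqref{PCSGL3} with constants independent of $K$.

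Writing $\mathbf{W}\mathbf{G}^t\vec{g}_{\mathrm{in},K} = \mathbf{G}_{\mathbf{W}}^t(\mathbf{W}\vec{g}_{\mathrm{in},K})$, I would invoke Lemma \ref{PCSGTL} with two choices of exponents. For the $L_x^2$ bound, take $p = r = 2$ in the second estimate of Lemma \ref{PCSGTL}: the polynomial prefactor collapses to $1$, while the fluid-type term $\|\mathbf{W}\vec{g}_{\mathrm{in},K}\|_{L_x^2(L^2_\xi)}$ and the exponentially decaying leftover norms are all dominated by $\|\mathbf{W}\vec{g}_{\mathrm{in},K}\|_{L^\infty_{\xi,\beta}(L_x^2)}$ via the pointwise embedding $\|h\|_{L^2_\xi} \lesssim \|h\|_{L^\infty_{\xi,\beta}}$, which is valid since $\beta > 3/2$ makes $(1+|\xi|^2)^{-\beta}$ integrable. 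For the $L_x^\infty$ bound, take $p = 2$, $r = \infty$; the prefactor becomes $(1+t)^{-3/4}$, acting on $\|\mathbf{W}\vec{g}_{\mathrm{in},K}\|_{L_x^2(L^2_\xi)}\lesssim\|\mathbf{W}\vec{g}_{\mathrm{in},K}\|_{L^\infty_{\xi,\beta}(L_x^2)}$, and the remaining exponential-in-time terms fold into $\|\mathbf{W}\vec{g}_{\mathrm{in},K}\|_{L^\infty_{\xi,\beta}(L_x^\infty)} + \|\mathbf{W}\vec{g}_{\mathrm{in},K}\|_{L^\infty_{\xi,\beta}(L_x^2)}$ after the same $\beta$-weighted embedding together with an $L^2$--$L^\infty$ interpolation $\|h\|_{L^2_\xi(L_x^2)}^{2/r}\|h\|_{L^2_\xi(L_x^\infty)}^{1-2/r}$ absorbed by Young's inequality.

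For the microscopic case $\mathsf{P}_0\vec{g}_{\mathrm{in},K} = \vec{0}$, note that $\mathbf{W}$ is diagonal and acts only on the species index while $\mathsf{P}_0$ is a projection in the velocity variable $\xi$; the two therefore commute, so $\mathsf{P}_0(\mathbf{W}\vec{g}_{\mathrm{in},K}) = \mathbf{W}\mathsf{P}_0\vec{g}_{\mathrm{in},K} = \vec{0}$, and the promised extra $(1+t)^{-1/2}$ decay is transferred directly from the microscopic-projection clause of Lemma \ref{PCSGTL}. The only step that is not entirely routine is the $K$-uniformity of the spectral-gap bookkeeping for the non-self-adjoint operator $\mathbf{W}\mathbf{L}\mathbf{W}^{-1}$; however, that bookkeeping has already been carried out in the paragraph just preceding the corollary, so the proof here amounts to stitching those facts to Lemma \ref{PCSGTL} with the two exponent choices above.
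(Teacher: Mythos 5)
Your proposal is correct and follows essentially the same route as the paper: pass to the weighted system \eqref{PCSGL3}, use the $K$-uniform spectral-gap and similarity arguments already prepared before the corollary so that Lemma \ref{PCSGTL} applies to the weighted solution operator, and then read off the two estimates with $(p,r)=(2,2)$ and $(p,r)=(2,\infty)$ together with the embedding $\|h\|_{L^2_\xi}\lesssim\|h\|_{L^\infty_{\xi,\beta}}$ for $\beta>3/2$ and $e^{-t/C}\lesssim(1+t)^{-3/4}$. The commutation of $\mathbf{W}$ with $\mathsf{P}_0$ for the extra decay is also exactly how the paper's clause transfers, so no gap.
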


%\subsection{Estimate of the gPC solution}
\subsection{Estimate of the gPC solution}
For the nonlinear system (\ref{PCSG}), following the idea in  Section 5.2 of \cite{liu2018hypocoercivity}, we perform weighted estimates.
First, we focus on the nonlinear term. Recall that the collision kernel is linear in $z$ and let 
\begin{equation}\label{1Skij}
\begin{aligned}
S_{kij}:=\int_{I_z}(b_0+zb_1)\psi_{k}(z)\psi_{i}(z)\psi_{j}(z)\pi(z)dz.
\end{aligned}
\end{equation}
By $b_0=1$, $|b_1|\le\gamma|b_0|$, $|z|\le C_z$ and \eqref{psik}, we have
\begin{equation}\label{Skij1}
\begin{aligned}
|S_{kij}|&\le(1+\gamma C_z)\|\psi_i\|_{L^\infty_z}\bigg| \int_{I_z}|\psi_k||\psi_j|\pi(z)dz\bigg|\\
&\le(1+\gamma C_z)\|\psi_i\|_{L^\infty_z}\sqrt{\bigg| \int_{I_z}|\psi_k|^2\pi(z)dz\bigg|\bigg| \int_{I_z}|\psi_j|^2\pi(z)dz\bigg|}\le\overline{C}_1i^n,
\end{aligned}
\end{equation}
where $\overline{C}_1=C(1+\gamma C_z)$ with $C$ given in \eqref{psik}. Now we assume that $\psi_k$ is a $(k-1)$th degree polynomial,
orthogonal to all lower-order polynomials, and 
\begin{equation}\label{Skij2}
S_{kij}=0,\ \text{if}\ (i-1)+(j-1)+1<k-1.
\end{equation}
 So $S_{kij}$ may be nonzero only when the inequality
\begin{equation*}
\begin{aligned}
i+j\ge k
\end{aligned}
\end{equation*}
holds. Note that \eqref{Skij1} and \eqref{Skij2} also hold if $k,i,j$ are permuted, that is, when the
inequalities
\begin{equation}\label{Skij7}
i+j\ge k,\ k+i\ge j \ \ \text{and}\ \ k+j\ge i,
\end{equation}
are satisfied, $S_{kij}$ may be nonzero. Let $\chi_{kij}$ be the indicator function of the set of indexes $(k,i,j)$ for which $S_{kij}\ne 0$, namely,
\begin{equation}\label{Skij8}
\chi_{kij}=
\begin{cases}
1, \ \  \  \ \ S_{kij}\ne 0,\\
0, \ \  \  \ \ S_{kij}= 0.
\end{cases}
\end{equation}

Next, we consider the case where $j\ge i$. By \eqref{Skij1} and \eqref{Skij2}, $(\frac{k}{2})^m|S_{kij}|i^{m-n}\le\overline{C}_1i^mj^m$, and we have
\begin{equation}\label{Skij3}
\begin{aligned}
\frac{k^{m}}{i^mj^m}|S_{kij}|\le\overline{C}i^{n-m},
\end{aligned}
\end{equation}
here, $\overline{C}=2^m\overline{C}_1$ independent of $K$. Similarly, for the case where $j\le i$, we have 
\begin{equation}\label{Skij6}
\begin{aligned}
\frac{k^{m}}{i^mj^m}|S_{kij}|\le\overline{C}j^{n-m}.
\end{aligned}
\end{equation}

Define operator $\vec{\Gamma}_K$ by
$$
\vec{\Gamma}_K(h^K,u^K)=(\Gamma_{1}(h^K,u^K),\cdots,\Gamma_{k}(h^K,u^K),\cdots,\Gamma_{K}(h^K,u^K))^T,
 $$
where, $h^K$ and $u^K$ are given functions, and satisfy that
$$
h^K(x,t,\xi,z)=\sum_{k=1}^Kh_{k}(x,t,\xi)\psi_{k}(z), \ \text{and} \ u^K(x,t,\xi,z)=\sum_{k=1}^Ku_{k}(x,t,\xi)\psi_{k}(z). 
$$
Here $\Gamma_{k}(h^K,u^K)$ is the quadratic form associated with \eqref{a4} for $1\le k\le K$.
Similar to Lemma \ref{Gamma}, we have the following lemma for the weighted estimates of $\vec{\Gamma}_K$:
\begin{lemma}\label{Gamma1}
Let $\vec{h}_K$ and $\vec{u}_K$ be the vector-valued function corresponding to $h^K$ and $u^K$, then $\vec{\Gamma}_K(h^K,u^K)$ satisfies that
\begin{equation*}
\begin{aligned}
\left\|\mathbf{W}\vec{\Gamma}_K(h^K,u^K)\right\|^2_{L^{\infty}_{\xi,\beta}(L_x^2)}&\lesssim\left\|\mathbf{W}\vec{h}_K(t)\right\|^2_{L^{\infty}_{\xi,\beta+1}(L_x^2)}\left\|\mathbf{W}\vec{u}_K(t)\right\|^2_{L^{\infty}_{\xi,\beta+1}(L^\infty_x)},\\
\left\|\mathbf{W}\vec{\Gamma}_K(h^K,u^K)\right\|^2_{L^{\infty}_{\xi,\beta}(L_x^1)}&\lesssim\left\|\mathbf{W}\vec{h}_K(t)\right\|^2_{L^{\infty}_{\xi,\beta+1}(L_x^2)}\left\|\mathbf{W}\vec{u}_K(t)\right\|^2_{L^{\infty}_{\xi,\beta+1}(L_x^2)},\\
\left\|\mathbf{W}\vec{\Gamma}_K(h^K,u^K)\right\|^2_{L^{\infty}_{\xi,\beta}(L_x^\infty)}&\lesssim\left\|\mathbf{W}\vec{h}_K(t)\right\|^2_{L^{\infty}_{\xi,\beta+1}(L^\infty_x)}\left\|\mathbf{W}\vec{u}_K(t)\right\|^2_{L^{\infty}_{\xi,\beta+1}(L_x^\infty)}.
\end{aligned}
\end{equation*}
\end{lemma}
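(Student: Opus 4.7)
\textbf{Proof plan for Lemma \ref{Gamma1}.} The approach is to decompose $\Gamma_k(h^K,u^K)$ into its modal components via the basis expansion, apply Lemma \ref{Gamma} termwise to pass from $\Gamma$ to its scalar bilinear bounds, and then reduce the weighted vector estimate to a sequence-space inequality on the triangle support $\chi_{kij}$ of \eqref{Skij7}--\eqref{Skij8}. The condition $m > n+1$ is what makes this inequality close.

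\textit{Step 1 (modal expansion).} Using bilinearity of $\Gamma^z$ in $(h,u)$ together with the $z$-linearity of $b$ assumed in \eqref{b3}, one writes
\[
\Gamma_k(h^K,u^K) \;=\; \sum_{i,j=1}^{K} S'_{kij}\,\widetilde{\Gamma}(h_i,u_j),
\]
where $\widetilde{\Gamma}$ is the single-species bilinear collision operator corresponding to the $z$-independent kernel $\cos\theta\,|\xi-\xi_*|$, so that Lemma \ref{Gamma} applies directly to $\widetilde{\Gamma}$. Since $|S'_{kij}|$ is controlled by $|S_{kij}|$ (up to the absorbed factor $\cos\theta$), the bounds \eqref{Skij3} and \eqref{Skij6} are the relevant ones.

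\textit{Step 2 (componentwise bilinear bound).} Applying the appropriate line of Lemma \ref{Gamma} to $\widetilde{\Gamma}(h_i,u_j)$ (and swapping the roles of $h$ and $u$ to match the norm pairing in the conclusion), one obtains for the $L^2_x$-case
\[
\|\Gamma_k(h^K,u^K)\|_{L^\infty_{\xi,\beta}(L^2_x)} \;\lesssim\; \sum_{i,j=1}^{K} |S_{kij}|\,\|h_i\|_{L^\infty_{\xi,\beta+1}(L^2_x)}\,\|u_j\|_{L^\infty_{\xi,\beta+1}(L^\infty_x)},
\]
and analogous inequalities for the $L^1_x$ and $L^\infty_x$ cases via the other lines of Lemma \ref{Gamma}.

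\textit{Step 3 (reduction to a weighted sequence inequality).} Setting $A_i := i^m \|h_i\|_{L^\infty_{\xi,\beta+1}(L^2_x)}$ and $B_j := j^m \|u_j\|_{L^\infty_{\xi,\beta+1}(L^\infty_x)}$, multiplying by $k^m$, and using the sharp weighted estimates \eqref{Skij3}--\eqref{Skij6} gives
\[
k^{m}\,\|\Gamma_k(h^K,u^K)\|_{L^\infty_{\xi,\beta}(L^2_x)} \;\lesssim\; \sum_{i,j=1}^{K} \chi_{kij}\,(\min(i,j))^{n-m}\,A_i B_j.
\]
The lemma then follows from the sequence-space bilinear inequality
\[
\sum_{k=1}^{K} \Bigl(\sum_{i,j=1}^{K} \chi_{kij}\,(\min(i,j))^{n-m}\,A_i B_j \Bigr)^{2} \;\lesssim\; \|A\|_{\ell^2}^{2}\,\|B\|_{\ell^2}^{2},
\]
with an implicit constant independent of $K$, together with the same kind of estimate with $B_j = j^m\|u_j\|_{L^\infty_{\xi,\beta+1}(L^2_x)}$ for the $L^1_x$ output, and with $A_i, B_j$ both in $L^\infty_x$ norms for the $L^\infty_x$ output.

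\textit{Step 4 (closing the sequence inequality).} Split the inner sum into $\{j\ge i\}$ and $\{j<i\}$, which are symmetric by exchanging the roles of $A$ and $B$. For $j\ge i$, the triangle support $\chi_{kij}$ restricts $j$ to an interval of length $\min(2i,k)+1$ at fixed $(k,i)$, and restricts $k$ to an interval of length $2i+1$ at fixed $(i,j)$. Performing Cauchy--Schwarz on the $j$-sum (using the first count) and then on the $i$-sum (using the second count), and using that $i^{n-m}\le 1$ for $i\ge 1$ since $m>n+1$, reduces the outer $k$-sum to $\sum_i i^{2(n-m)+1} A_i^2\,\|B\|_{\ell^2}^2$. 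The factor $i^{2(n-m)+1}$ is bounded precisely under $m>n+1$, giving the desired $K$-uniform bound.

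\textit{Step 5 (other two cases).} The $L^1_x$ and $L^\infty_x$ versions of the lemma are proved by the same three-step scheme with only Step 2 modified to use the matching line of Lemma \ref{Gamma}; Steps 3 and 4 carry over verbatim.

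\textbf{Main obstacle.} The delicate point is Step 4. The triangle support $\chi_{kij}$ has "size" that grows linearly with one of the indices, and a careless Cauchy--Schwarz loses a factor that demands $m > n+\tfrac{3}{2}$ (or worse) for convergence. Closing the argument exactly at the threshold $m>n+1$ from \eqref{weight} requires routing Cauchy--Schwarz through the $i$-summation with the weight $i^{n-m}$ on the correct side and exploiting the symmetry of $S_{kij}$ in $(k,i,j)$, so that the residual summation $\sum_i i^{2(n-m)+1}$ is finite. This is where the choice of exponent $m > n+1$ in the weight matrix is used in a sharp way.
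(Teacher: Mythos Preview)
Your Steps 1--3 parallel the paper's setup, but Step~4 has a genuine gap: the sequence inequality you reduce to in Step~3,
\[
\sum_{k}\Bigl(\sum_{i,j}\chi_{kij}\,(\min(i,j))^{n-m}A_iB_j\Bigr)^{2}\;\lesssim\;\|A\|_{\ell^2}^{2}\,\|B\|_{\ell^2}^{2},
\]
is \emph{false} for $m$ just above $n+1$. Take $A_i=B_i=i^{-1/2-\epsilon}$ and $m=n+1+\delta$ with, say, $\delta=\epsilon=1/10$. For $k\sim K/2$ and $i\le k/2$ the $j$-sum over $[k-i,k+i]$ is of order $i\,k^{-1/2-\epsilon}$; summing $i^{-3/2-\delta-\epsilon}\cdot i\,k^{-1/2-\epsilon}$ over $1\le i\le k/2$ gives the inner bracket $\gtrsim k^{-\delta-2\epsilon}$, hence $\sum_k(\cdots)^2\gtrsim K^{1-2\delta-4\epsilon}\to\infty$ while $\|A\|_{\ell^2}\|B\|_{\ell^2}<\infty$. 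The underlying obstruction is that the averaging map $B\mapsto\bigl(\sum_{|j-k|\le i}B_j\bigr)_k$ has $\ell^2\!\to\!\ell^2$ norm of order $i$ (test on $B\equiv 1$), not $i^{1/2}$; any Cauchy--Schwarz/Minkowski scheme in $i$ therefore leaves $\sum_i i^{2(n-m+1)}$, which converges only for $m>n+3/2$ --- precisely the ``careless'' threshold you hoped to beat. Your intermediate claim that the procedure yields $\sum_i i^{2(n-m)+1}A_i^2\,\|B\|_{\ell^2}^2$ is off by one power of $i$.

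The paper does not pass through this bilinear sequence inequality. At \eqref{Skij4} it goes directly from $\sum_k\bigl\|k^m\sum_{i,j}S_{kij}\Gamma(h_i,u_j)\bigr\|^2$ to $\sum_{k,i,j}k^{2m}|S_{kij}|^2\chi_{kij}\|\Gamma(h_i,u_j)\|^2$, and then sums $k$ out \emph{first} using $\#\{k:\chi_{kij}=1\}\le 2\min(i,j)+1$, landing on $\sum_{i,j}(\min(i,j))^{2(n-m)+1}A_i^2B_j^2$, which factors. You should, however, examine that passage as well: as written, \eqref{Skij4} asserts $\bigl\|\sum_{i,j}\cdots\bigr\|^2\le\sum_{i,j}\|\cdots\|^2$, which is the wrong direction and does not follow from Minkowski's inequality alone.
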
 
\begin{proof}
Consider the weighted ${L^{\infty}_{\xi,\beta}(L_x^2)}$ estimates for $\vec{\Gamma}_K(h^K,u^K)$, and by \eqref{a4}, \eqref{a6}, \eqref{1Skij} and Minkowski's  inequality, it follows
\begin{equation}\label{Skij4}
\begin{aligned}
\left\|\mathbf{W}\vec{\Gamma}_K(h^K,u^K)\right\|^2_{L^{\infty}_{\xi,\beta}(L_x^2)}&=\sum_{k=1}^K\left\|k^m\Gamma_{k}(h^K,u^K) \right\|^2_{L^{\infty}_{\xi,\beta}(L_x^2)}=\sum_{k=1}^K\left\|k^m\sum_{i,j=1}^KS_{kij}\chi_{kij}\Gamma(h_i,u_j) \right\|^2_{L^{\infty}_{\xi,\beta}(L_x^2)}\\
&\le\sum_{k,i,j=1}^Kk^{2m}|S_{kij}|^2\chi_{kij}\left\|\Gamma(h_i,u_j) \right\|^2_{L^{\infty}_{\xi,\beta}(L_x^2)}.
\end{aligned}
\end{equation}
By Lemma \ref{Gamma}, \eqref{Skij3}, \eqref{Skij6} and \eqref{Skij4}, it follows 
\begin{equation}\label{Skij5}
\begin{aligned}
&\ \ \sum_{k,i,j=1}^Kk^{2m}|S_{kij}|^2\chi_{kij}\left\|\Gamma(h_i,u_j) \right\|^2_{L^{\infty}_{\xi,\beta}(L_x^2)}\\
&\lesssim\sum_{k,i,j=1}^Kk^{2m}|S_{kij}|^2\chi_{kij}\left\|h_i \right\|^2_{L^{\infty}_{\xi,\beta+1}(L_x^2)}\left\|u_j \right\|^2_{L^{\infty}_{\xi,\beta+1}(L_x^\infty)}\\
&\lesssim\sum_{k,i,j=1}^K\frac{k^{2m}}{i^{2m}j^{2m}}|S_{kij}|^2\chi_{kij}\left\|i^mh_i \right\|^2_{L^{\infty}_{\xi,\beta+1}(L_x^2)}\left\|j^mu_j \right\|^2_{L^{\infty}_{\xi,\beta+1}(L_x^\infty)}\\
&\lesssim\sum_{k,i,j=1}^Ki^{n-m}\chi_{kij}\left\|i^mh_i \right\|^2_{L^{\infty}_{\xi,\beta+1}(L_x^2)}\left\|j^mu_j \right\|^2_{L^{\infty}_{\xi,\beta+1}(L_x^\infty)}\\
&\ \ \ +\sum_{k,i,j=1}^Kj^{n-m}\chi_{kij}\left\|i^mh_i \right\|^2_{L^{\infty}_{\xi,\beta+1}(L_x^2)}\left\|j^mu_j \right\|^2_{L^{\infty}_{\xi,\beta+1}(L_x^\infty)}.\\
\end{aligned}
\end{equation}
Next one expects that 
\begin{equation}\label{Skij9}
\begin{aligned}
\sum_{k,i,j=1}^Ki^{n-m}\chi_{kij}\left\|i^mh_i \right\|^2_{L^{\infty}_{\xi,\beta+1}(L_x^2)}\left\|j^mu_j \right\|^2_{L^{\infty}_{\xi,\beta+1}(L_x^\infty)}\le 2(\sum_{j=1}^K\left\|j^mh_j \right\|^2_{L^{\infty}_{\xi,\beta+1}(L_x^\infty)})(\sum_{i=1}^K\left\|i^mu_i \right\|^2_{L^{\infty}_{\xi,\beta+1}(L_x^2)}).
\end{aligned}
\end{equation}
To get \eqref {Skij9}, one writes 
$$
\sum_{k,i,j=1}^Ki^{n-m}\chi_{kij}\left\|i^mh_i \right\|^2_{L^{\infty}_{\xi,\beta+1}(L_x^2)}\left\|j^mu_j \right\|^2_{L^{\infty}_{\xi,\beta+1}(L_x^\infty)}=\sum_{j=1}^K\left\|j^mu_j \right\|^2_{L^{\infty}_{\xi,\beta+1}(L_x^\infty)}I_j,
$$
and
$$
I_j=\sum_{k,i=1}^Ki^{n-m}\chi_{kij}\left\|i^mh_i \right\|^2_{L^{\infty}_{\xi,\beta+1}(L_x^2)}.
$$
By \eqref {Skij7} and \eqref {Skij8}, $\chi_{kij}=1$ indicates that $j-i\le k \le j+i$, so $k$ has at most $2i$ choices for a fixed $i$. That is, 
$$
I_j\le 2\sum_{i=1}^K i^{n-m+1}\left\|i^mh_i \right\|^2_{L^{\infty}_{\xi,\beta+1}(L_x^2)}\le 2\sum_{i=1}^K\left\|i^mh_i \right\|^2_{L^{\infty}_{\xi,\beta+1}(L_x^2)}.
$$
Similarly, we have 
\begin{equation}\label{Skij10}
\begin{aligned}
\sum_{k,i,j=1}^Kj^{n-m}\chi_{kij}\left\|i^mh_i \right\|^2_{L^{\infty}_{\xi,\beta+1}(L_x^2)}\left\|j^mu_j \right\|^2_{L^{\infty}_{\xi,\beta+1}(L_x^\infty)}\le 2(\sum_{j=1}^K\left\|j^mu_j \right\|^2_{L^{\infty}_{\xi,\beta+1}(L_x^\infty)})(\sum_{i=1}^K\left\|i^mh_i \right\|^2_{L^{\infty}_{\xi,\beta+1}(L_x^2)}).
\end{aligned}
\end{equation}
Combining \eqref{Skij4}-\eqref{Skij10}, we have 
\begin{equation*}\label{Skij}
\begin{aligned}
\left\|\mathbf{W}\vec{\Gamma}_K(h^K,u^K)\right\|^2_{L^{\infty}_{\xi,\beta}(L_x^2)}&\lesssim(\sum_{j=1}^K\left\|j^mu_j \right\|^2_{L^{\infty}_{\xi,\beta+1}(L_x^\infty)})(\sum_{i=1}^K\left\|i^mh_i \right\|^2_{L^{\infty}_{\xi,\beta+1}(L_x^2)})\\
&\lesssim\left\|\mathbf{W}\vec{u}_K(t)\right\|^2_{L^{\infty}_{\xi,\beta+1}(L^\infty_x)}\left\|\mathbf{W}\vec{h}_K(t)\right\|^2_{L^{\infty}_{\xi,\beta+1}(L_x^2)}.
\end{aligned}
\end{equation*}
Similarly, we can obtain the weighted ${L^{\infty}_{\xi,\beta}(L_x^1)}$ and ${L^{\infty}_{\xi,\beta}(L_x^\infty)}$ estimates for $\vec{\Gamma}_K(h^K,u^K)$, and we omit it here. We have completed the proof.
\end{proof}

We state that $\mathbf{G}^t$ satisfies similar properties as those in Lemma \ref{GTS}.
Through Lemma \ref{PCSGTL} and a proof similar to Lemma \ref{INLf}, we can solve the system (\ref{PCSG}) of the Boltzmann equation, and obtain the following lemma:
\begin{lemma}\label{PCSGT} 
Let $f^K$ be the solution of (\ref{PCSG}) with the initial data $f_0$. $\vec{f}_K$ is the vector-valued function corresponding to $f^K$ and $\vec{f}_{\mathrm{in},K}$ is the  initial data corresponding to $f_0$. There exist constants $\delta,C_S>0$ independent of $K$ such that if 
$$
\left\|\mathbf{W}\vec{f}_{\mathrm{in},K}\right\|_{L^{\infty}_{\xi,\beta}(L_x^\infty)}+\left\|\mathbf{W}\vec{f}_{\mathrm{in},K}\right\|_{L^{\infty}_{\xi,\beta}(L_x^{1})}<\delta, \ \beta > 3/2,
$$
 then 
\begin{equation*}
\begin{aligned}
\left\|\mathbf{W}\vec{f}_K(t)\right\|_{L^{\infty}_{\xi,\beta}(L_x^2)}\le C_S(1+t)^{-\frac34}(\left\|\mathbf{W}\vec{f}_{\mathrm{in},K}\right\|_{L^{\infty}_{\xi,\beta}(L_x^\infty)}+\left\|\mathbf{W}\vec{f}_{\mathrm{in},K}\right\|_{L^{\infty}_{\xi,\beta}(L_x^{1})}),
\end{aligned}
\end{equation*}
\begin{equation*}
\begin{aligned}
\left\|\mathbf{W}\vec{f}_K(t)\right\|_{L^{\infty}_{\xi,\beta}(L_x^{\infty})}\le C_S(1+t)^{-\frac32}(\left\|\mathbf{W}\vec{f}_{\mathrm{in},K}\right\|_{L^{\infty}_{\xi,\beta}(L_x^\infty)}+\left\|\mathbf{W}\vec{f}_{\mathrm{in},K}\right\|_{L^{\infty}_{\xi,\beta}(L_x^{1})}).
\end{aligned}
\end{equation*}

 Moreover, if $\mathsf{P}_0\vec{f}_{\mathrm{in}}=\vec{0}$, then we will get extra $(1+t)^{-1/2}$ decay rate for each estimate above.
\end{lemma}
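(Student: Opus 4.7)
The plan is to mimic the fixed-point argument of Lemma \ref{INLf}, carried out at the vector-valued level in the $\mathbf{W}$-weighted norm. By Duhamel's principle applied to system \eqref{PCSG},
\begin{equation*}
\vec{f}_K(t) = \mathbf{G}^t \vec{f}_{\mathrm{in},K} + \int_0^t \mathbf{G}^{t-\tau} \vec{\Gamma}_K(f^K,f^K)(\tau)\,d\tau,
\end{equation*}
and multiplying by $\mathbf{W}$ I set up a contraction on the ball defined by the ansatz
\begin{equation*}
\|\mathbf{W}\vec{f}_K(t)\|_{L^{\infty}_{\xi,\beta}(L_x^2)} \le \tfrac{10 C_S}{(1+t)^{3/4}}\varepsilon_S, \qquad \|\mathbf{W}\vec{f}_K(t)\|_{L^{\infty}_{\xi,\beta}(L_x^\infty)} \le \tfrac{10 C_S}{(1+t)^{3/2}}\varepsilon_S,
\end{equation*}
where $\varepsilon_S := \|\mathbf{W}\vec{f}_{\mathrm{in},K}\|_{L^\infty_{\xi,\beta}(L_x^1)} + \|\mathbf{W}\vec{f}_{\mathrm{in},K}\|_{L^\infty_{\xi,\beta}(L_x^\infty)}$.

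For the linear term, since $\mathbf{W}\mathbf{G}^t\mathbf{W}^{-1}$ is the semigroup generated by $-\xi\cdot\nabla_x + \mathbf{W}\mathbf{L}\mathbf{W}^{-1}$, and since the argument preceding Corollary \ref{PCSGTL1} shows that $\mathbf{W}\mathbf{L}\mathbf{W}^{-1}$ has the same spectrum and a $K$-independent spectral-gap as $\mathbf{L}$, Lemma \ref{PCSGTL} applies verbatim to the weighted system \eqref{PCSGL3}. Taking $(p,r)=(1,2)$ and $(p,r)=(1,\infty)$ yields
\begin{equation*}
\|\mathbf{W}\mathbf{G}^t\vec{f}_{\mathrm{in},K}\|_{L^\infty_{\xi,\beta}(L_x^2)} \lesssim \tfrac{\varepsilon_S}{(1+t)^{3/4}}, \qquad \|\mathbf{W}\mathbf{G}^t\vec{f}_{\mathrm{in},K}\|_{L^\infty_{\xi,\beta}(L_x^\infty)} \lesssim \tfrac{\varepsilon_S}{(1+t)^{3/2}},
\end{equation*}
with constants independent of $K$; the exponentially decaying pieces in Lemma \ref{PCSGTL} are absorbed into $\varepsilon_S$.

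For the nonlinear term, the unit loss in the $\xi$-weight produced by Lemma \ref{Gamma1} (outputs in $L^\infty_{\xi,\beta}$ cost $L^\infty_{\xi,\beta+1}$ on the inputs) is handled exactly as in \eqref{4.11}--\eqref{4.12}: the vector-valued analogue of Lemma \ref{GTS} holds for $\mathbf{G}^t$ by the same pointwise decomposition argument, providing a unit gain of $\xi$-weight inside the Duhamel integral. Combining this with Lemma \ref{Gamma1} and the ansatz,
\begin{equation*}
\|\mathbf{W}\vec{\Gamma}_K(f^K,f^K)(\tau)\|_{L^\infty_{\xi,\beta-1}(L_x^1)} \lesssim \|\mathbf{W}\vec{f}_K(\tau)\|_{L^\infty_{\xi,\beta}(L_x^2)}^2 \lesssim \tfrac{\varepsilon_S^2}{(1+\tau)^{3/2}},
\end{equation*}
and similarly in $L^\infty_{\xi,\beta-1}(L_x^2)$ and $L^\infty_{\xi,\beta-1}(L_x^\infty)$. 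Inserting these into $\int_0^t \mathbf{W}\mathbf{G}^{t-\tau}\,d\tau$ and performing the time-integral splits of Section \ref{nonlinear problems} produces bounds of order $\varepsilon_S^2(1+t)^{-5/4}$ and $\varepsilon_S^2(1+t)^{-3/2}$, strictly faster than the linear part, so the ansatz closes for $\varepsilon_S$ small. Contraction is obtained by an identical calculation on the difference of two iterates, using the bilinearity of $\vec{\Gamma}_K$.

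The main obstacle is tracking the $K$-independence of every constant through the weighted estimates; this is secured at two places, namely the $K$-independent spectral gap for $\mathbf{W}\mathbf{L}\mathbf{W}^{-1}$ (Lemma \ref{ESGE} together with the computation $|b'_{k,k+1}|+|b'_{k+1,k}|\le(2^m+1)|b_1|$) and the $K$-independent bilinear control Lemma \ref{Gamma1}, which exploited the support property \eqref{Skij7}--\eqref{Skij8} of $S_{kij}$ and the choice $m>n+1$. For the $\mathsf{P}_0\vec{f}_{\mathrm{in},K} = \vec{0}$ improvement, I note that Lemma \ref{PCSGTL} already encodes the extra $(1+t)^{-1/2}$ for the linear piece, and that the vector analogue $\mathsf{P}_0\vec{\Gamma}_K = \vec{0}$ of \eqref{P0P1} holds entrywise, so the same microscopic gain applies under $\mathbf{G}^{t-\tau}$ in the Duhamel integral, yielding the refined decay.
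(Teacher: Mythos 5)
Your proposal follows essentially the same route as the paper, which likewise obtains this lemma by Duhamel's principle, the weighted linear decay estimates (Lemma \ref{PCSGTL}, Corollary \ref{PCSGTL1} and the $\mathbf{G}^t$-analogue of Lemma \ref{GTS}), the weighted bilinear bounds of Lemma \ref{Gamma1}, and an ansatz/fixed-point closure modeled on Lemma \ref{INLf}, with your nonlinear Duhamel estimates reproducing \eqref{4.11}--\eqref{4.12}. One small caution: for the initial-data term you should invoke the weighted fluid/particle decomposition of $\mathbf{G}^t$ (the vector analogue of Lemma \ref{UCf}, part $(\mathrm{II})$) rather than Lemma \ref{PCSGTL} with $p=1$ literally, since the latter is stated with the $L_x^1(L^2_{\xi})$ norm of $\mathbf{W}\vec{f}_{\mathrm{in},K}$, which is not controlled by $\varepsilon_S(\vec{f}_{\mathrm{in},K})$; this decomposition is exactly what the paper appeals to.
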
 

%\subsection{Estimate of the gPC error}
\subsection{Estimate of the gPC error}
In this subsection, we discuss the estimate of the gPC error to prove Theorem \ref{ESG}. The total gPC error is defined by 
\begin{equation*}
\begin{aligned}
f^e:=f-f^K=f-P_Kf+P_Kf-f^K,
\end{aligned}
\end{equation*}
and define the projection error $R^K=f-P_Kf$, the numerical error $e^K=P_Kf-f^K$.

By Theorem \ref{T1} and the standard estimate on the projection error in \cite{g7,g8}, for a fixed $\alpha \in \mathbb{N}_+$, $\delta$ and $C_\alpha$ are those in Theorem \ref{T1}, if $f_0(x,t,\xi,z)$ satisfies that
$$
\sum_{s=0}^\alpha\sum_{k=1}^\infty\left\|k^m\int_{I_z}\psi_k(z)\pi(z)\partial^s_zf_0dz\right\|_{L^{\infty}_{\xi,\beta}(L_x^{1}\cap L_x^{\infty})}<\delta,
$$
then, $R^K$ satisfies that
\begin{equation}\label{pc1}
\begin{aligned}
\sum_{k=1}^\infty\left\|k^m\int_{I_z}\psi_k(z)\pi(z)R^Kdz\right\|_{L^{\infty}_{\xi,\beta}(L_x^{2})}&\\
\le \frac{ C_\alpha(\ln(1+t))^{\alpha-1}}{K^\alpha(1+t)^{\frac34}}&\sum_{s=0}^\alpha\sum_{k=1}^\infty\left\|k^m\int_{I_z}\psi_k(z)\pi(z)\partial^s_zf_0dz\right\|_{L^{\infty}_{\xi,\beta}(L_x^{1}\cap L_x^{\infty})},\\
\sum_{k=1}^\infty\left\|k^m\int_{I_z}\psi_k(z)\pi(z)R^Kdz\right\|_{L^{\infty}_{\xi,\beta}(L_x^{\infty})}& \\
\le \frac{ C_\alpha(\ln(1+t))^{\alpha-1}}{K^\alpha(1+t)^{\frac32}}&\sum_{s=0}^\alpha\sum_{k=1}^\infty\left\|k^m\int_{I_z}\psi_k(z)\pi(z)\partial^s_zf_0dz\right\|_{L^{\infty}_{\xi,\beta}(L_x^{1}\cap L_x^{\infty})},
\end{aligned}
\end{equation}
which means that increasing the number $K$ of terms in the expansion can lead to a decrease in the projection error $R^K$. In order to simplify the notation, we define $\varepsilon_\alpha(f_0)$ independent of $K$ by
$$
\varepsilon_\alpha(f_0):=\sum_{s=0}^\alpha\sum_{k=1}^\infty\left\|k^m\int_{I_z}\psi_k(z)\pi(z)\partial^s_zf_0dz\right\|_{L^{\infty}_{\xi,\beta}(L_x^{1}\cap L_x^{\infty})}.
$$

From (\ref{PK}), $e^K$ satisfies that
\begin{equation*}
\begin{aligned}
e^K=P_Kf-f^K= \sum_{k=1}^{K}(\widehat{f}_{k}(x,t,\xi)-f_k(x,t,\xi))\psi_{k}(z)=: \sum_{k=1}^{K}e_k(x,t,\xi)\psi_{k}(z),
\end{aligned}
\end{equation*}
where one defines the coefficients of $e^K$ by 
\begin{equation*}
\begin{aligned}
e_k=\widehat{f}_{k}-f_k, \ 1\le k\le K.
\end{aligned}
\end{equation*}
Through $P_Kf$, we also define a vector-valued function $\vec{P}_K(x,t,\xi)$ by
$$
\vec{P}_K(x,t,\xi)=(\widehat{f}_1,\cdots,\widehat{f}_k,\cdots,\widehat{f}_K)^T,
$$
and it satisfies that
\begin{equation}\label{PKf}
 \begin{cases}
\partial_t\vec{P}_{K}(x,t,\xi)+\xi\cdot \nabla_x\vec{P}_{K}(x,t,\xi)=\mathbf{L}(\vec{P}_{K}(x,t,\xi))+\vec{R}_K+\vec{\Gamma}_K(f,f),\\\
\vec{P}_{K}(x,t,\xi)=\vec{f}_{\mathrm{in},K}(x,\xi).\\
\end{cases}
\end{equation}
Here, 
$\vec{R}_K$ satisfies that
$$
\vec{R}_K=(r_1,\cdots,r_k,\cdots,r_K)^T,
$$
where $r_k=0, 1\le k\le K-1$, $r_K=b_{K,K+1}\mathsf{L}\widehat{f}_{K+1}$, and $|b_{K,K+1}|\le b_1$. Moreover, $\widehat{f}_{K+1}$ satisfies the estimate \eqref{pc1} for $R^K$, and it follows that
\begin{equation}\label{pc3}
\begin{aligned}
\left\|(K+1)^m\widehat{f}_{K+1}(t)\right\|_{L^\infty_{\xi,\beta}(L_x^2)}&\le \frac{ C_\alpha(\ln(1+t))^{\alpha-1}}{K^\alpha(1+t)^{\frac34}}\varepsilon_\alpha(f_0),\\
\left\|(K+1)^m\widehat{f}_{K+1}(t)\right\|_{L^\infty_{\xi,\beta}(L_x^\infty)}& \le \frac{ C_\alpha(\ln(1+t))^{\alpha-1}}{K^\alpha(1+t)^{\frac32}}\varepsilon_\alpha(f_0).
\end{aligned}
\end{equation}
%It is worth noting that by definition \eqref{a4}, for $1\le k\le K$, we have
%$$
%\Gamma_k(f,f)=\sum_{i,j=1}^{\infty}S_{kij}\Gamma(\widehat{f}_{i},\widehat{f}_{j}).
%$$

Let $\vec{E}_K$ be the vector-valued function corresponding to $e^K$. Recall that $e^K=P_Kf-f^K$, then by comparing the differences between system \eqref{PCSG} and \eqref{PKf}, $\vec{E}_{K}$ satisfies
\begin{equation}\label{Ek}
 \begin{cases}
\partial_t\vec{E}_{K}+\xi\cdot \nabla_x\vec{E}_{K}=\mathbf{L}\vec{E}_{K}+\vec{R}_K+\vec{\Gamma}_K(f,f)-\vec{\Gamma}_K(f^K,f^K),\\
\vec{E}_{K}(x,0,\xi)=\vec{0}. \\
\end{cases}
\end{equation}
Treating $\vec{R}_K$ and $\vec{\Gamma}_K(f,f)-\vec{\Gamma}_K(f^K,f^K)$ as source terms, we apply Duhamel's principle to system \eqref{Ek}, and it follows that
$$
\vec{E}_K=\int_0^t\mathbf{G}^{t-\tau}\vec{R}_K(\tau)d\tau+\int_0^t\mathbf{G}^{t-\tau}(\vec{\Gamma}_K(f,f)-\vec{\Gamma}_K(f^K,f^K))(\tau)d\tau.
$$
 By Corollary \ref{PCSGTL1}, (\ref{pc3}) and $\mathsf{P}_0\vec{R}_K=0$, the first term satisfies
\begin{equation}\label{GRK}
\begin{aligned}
\left\|\mathbf{W}\int_0^t\mathbf{G}^{t-\tau}\vec{R}_K(\tau)d\tau\right\|_{L^\infty_{\xi,\beta}(L_x^2)}&\le \frac{ C_{G,S}C_\alpha(\ln(1+t))^{\alpha-1}}{K^\alpha(1+t)^{\frac12}}\varepsilon_\alpha(f_0),\\
\left\|\mathbf{W}\int_0^t\mathbf{G}^{t-\tau}\vec{R}_K(\tau)d\tau\right\|_{L^\infty_{\xi,\beta}(L_x^\infty)}& \le \frac{ C_{G,S}C_\alpha(\ln(1+t))^{\alpha-1}}{K^\alpha(1+t)^{\frac54}}\varepsilon_\alpha(f_0).
\end{aligned}
\end{equation}
Here, compared to the results of Lemma \ref{PCSGTL}, the slower time decay is due to the availability of only the $L^2_x$ estimate for $\vec{R}_K$.
So motivated by \eqref{GRK}, we make the following assumption for $\vec{E}_{K}$:
\begin{equation}\label{pc2}
\begin{aligned}
\left\|\mathbf{W}\vec{E}_{K}(t)\right\|_{L^\infty_{\xi,\beta}(L_x^2)}&\le \frac{ 10C_{G,S}C_\alpha(\ln(1+t))^{\alpha-1}}{K^\alpha(1+t)^{\frac12}}\varepsilon_\alpha(f_0),\\
\left\|\mathbf{W}\vec{E}_{K}(t)\right\|_{L^\infty_{\xi,\beta}(L_x^\infty)}& \le \frac{ 10C_{G,S}C_\alpha(\ln(1+t))^{\alpha-1}}{K^\alpha(1+t)^{\frac54}}\varepsilon_\alpha(f_0).
\end{aligned}
\end{equation}

Note that
\begin{equation*}
\begin{aligned}
\vec{\Gamma}_{K}(f,f)-\vec{\Gamma}_{K}(f^K,f^K)&=\vec{\Gamma}_{K}(f,f-f^K)+\vec{\Gamma}_{K}(f-f^K,f^K),\\
\end{aligned}
\end{equation*}
and by Lemma \ref{Gamma1}, its weighted $L^\infty_{\xi,\beta-1}(L_x^{2})$ estimate satisfies
\begin{equation}\label{pc8}
\begin{aligned}
&\ \ \ \left\|\mathbf{W}\vec{\Gamma}_{K}(f,f)-\mathbf{W}\vec{\Gamma}_{K}(f^K,f^K)\right\|^2_{L^\infty_{\xi,\beta-1}(L_x^{2})}\\
&\lesssim\left\|\mathbf{W}\vec{\Gamma}_{K}(f,f-f^K)\right\|^2_{L^\infty_{\xi,\beta-1}(L_x^{2})}+\left\|\mathbf{W}\vec{\Gamma}_{K}(f-f^K,f^K)\right\|^2_{L^\infty_{\xi,\beta-1}(L_x^{2})}\\
&\lesssim(\left\|\mathbf{W}\vec{E}_K\right\|^2_{L^\infty_{\xi,\beta}(L_x^{\infty})}+\sum_{k=1}^\infty\left\|k^m\int_{I_z}\psi_k(z)\pi(z)R^Kdz\right\|^2_{L^{\infty}_{\xi,\beta}(L_x^{\infty})})\\
&\ \ \ \times \bigg(\sum_{k=1}^\infty\left\|k^m\int_{I_z}\psi_k(z)\pi(z)fdz\right\|^2_{L^{\infty}_{\xi,\beta}(L_x^{2})}+\left\|\mathbf{W}\vec{f}_K\right\|^2_{L^\infty_{\xi,\beta}(L_x^{2})}\bigg).\\
\end{aligned}
\end{equation}
Clearly, the estimates of both $f$ and $\vec{f}_K$ have been established in Theorem \ref{T1} and Lemma \ref{PCSGT}. So by Theorem \ref{T1}, Lemma \ref{PCSGT}, \eqref{pc1} and assumption \eqref{pc2}, it follows from \eqref{pc8} that
\begin{equation}\label{pc4}
\begin{aligned}
&\ \ \ \left\|\mathbf{W}\vec{\Gamma}_{K}(f,f)-\mathbf{W}\vec{\Gamma}_{K}(f^K,f^K)\right\|_{L^\infty_{\xi,\beta-1}(L_x^{2})}\\
&\lesssim\bigg(\frac{(\ln(1+t))^{\alpha-1}}{K^\alpha(1+t)^{\frac54}}\varepsilon_\alpha(f_0)+(\frac{K}{K+1})^m\frac{(\ln(1+t))^{\alpha-1}}{K^\alpha(1+t)^{\frac54}}\varepsilon_\alpha(f_0)\bigg)\bigg(\frac{1}{(1+t)^\frac34}\varepsilon_\alpha(f_0)+\frac{1}{(1+t)^\frac34}\varepsilon_\alpha(f_0)\bigg)\\
&\lesssim\frac{(\ln(1+t))^{\alpha-1}}{K^\alpha(1+t)^{2}}(\varepsilon_\alpha(f_0))^2.
\end{aligned}
\end{equation}
Similarly, the weighted $L^\infty_{\xi,\beta}(L_x^{1})$ and $L^\infty_{\xi,\beta}(L_x^{\infty})$ estimates satisfy that
\begin{equation}\label{pc5}
\begin{aligned}
&\ \ \ \left\|\mathbf{W}\vec{\Gamma}_{K}(f,f)-\mathbf{W}\vec{\Gamma}_{K}(f^K,f^K)\right\|_{L^\infty_{\xi,\beta-1}(L_x^{1})}\lesssim\frac{(\ln(1+t))^{\alpha-1}}{K^\alpha(1+t)^{\frac54}}(\varepsilon_\alpha(f_0))^2,\\
\end{aligned}
\end{equation}
and
\begin{equation*}
\begin{aligned}
&\ \ \ \left\|\mathbf{W}\vec{\Gamma}_{K}(f,f)-\mathbf{W}\vec{\Gamma}_{K}(f^K,f^K)\right\|_{L^\infty_{\xi,\beta-1}(L_x^{\infty})}\lesssim\frac{(\ln(1+t))^{\alpha-1}}{K^\alpha(1+t)^{\frac{11}{4}}}(\varepsilon_\alpha(f_0))^2.
\end{aligned}
\end{equation*}

For the estimates of the remaining part, it is similar to \eqref{4.11}, and by Theorem \ref{T1}, Lemma \ref{PCSGT} and \eqref{pc2}, it follows
\begin{equation*}
\begin{aligned}
&\ \ \ \left\| \mathbf{W}\int_0^t\mathbf{G}^{t-\tau}(\vec{\Gamma}_K(f,f)-\vec{\Gamma}_K(f^K,f^K))(\tau)d\tau \right\|_{L^\infty_{\xi,\beta}(L_x^{2})}\\
&\lesssim\int_0^t \frac{1}{(1+t-\tau)^{\frac34+\frac12}}\left\|\mathbf{W}\vec{\Gamma}_{K}(f,f)(\tau)-\mathbf{W}\vec{\Gamma}_{K}(f^K,f^K)(\tau)\right\|_{L^\infty_{\xi,\beta-1}(L_x^{1})}d\tau\\
&\ \ \ + \int_0^t \frac{1}{(1+t-\tau)^{\frac34+\frac12}}\left\| \mathbf{W}\vec{\Gamma}_{K}(f,f)(\tau)-\mathbf{W}\vec{\Gamma}_{K}(f^K,f^K)(\tau)\right\|_{L^\infty_{\xi,\beta-1}(L_x^{2})}d\tau.\\
\end{aligned}
\end{equation*}
On the one hand, by \eqref{pc5}, we have 
\begin{equation}\label{pc6}
\begin{aligned}
 &\ \ \ \int_0^t \frac{1}{(1+t-\tau)^{\frac34+\frac12}}\left\|\mathbf{W}\vec{\Gamma}_{K}(f,f)(\tau)-\mathbf{W}\vec{\Gamma}_{K}(f^K,f^K)(\tau)\right\|_{L^\infty_{\xi,\beta-1}(L_x^{1})}d\tau\\
&\lesssim\int_0^t \frac{1}{(1+t-\tau)^{\frac34+\frac12}}\frac{ (\ln(1+\tau))^{\alpha-1}}{K^\alpha(1+\tau)^{\frac54}}d\tau(\varepsilon_\alpha(f_0))^2\\
&\lesssim\frac{ (\ln(1+t))^{\alpha-1}}{K^{\alpha}(1+t)^{\frac54}}(\varepsilon_\alpha(f_0))^2.
\end{aligned}
\end{equation}
On the other hand, by \eqref{pc4}, we have 
\begin{equation}\label{pc7}
\begin{aligned}
 &\ \ \ \int_0^t \frac{1}{(1+t-\tau)^{\frac34+\frac12}}\left\|\mathbf{W}\vec{\Gamma}_{K}(f,f)(\tau)-\mathbf{W}\vec{\Gamma}_{K}(f^K,f^K)(\tau)\right\|_{L^\infty_{\xi,\beta-1}(L_x^{2})}d\tau\\
&\lesssim\int_0^t \frac{1}{(1+t-\tau)^{\frac34+\frac12}}\frac{ (\ln(1+\tau))^{\alpha-1}}{K^\alpha(1+\tau)^{2}}d\tau(\varepsilon_\alpha(f_0))^2\\
&\lesssim\frac{ (\ln(1+t))^{\alpha-1}}{K^{\alpha}(1+t)^{\frac54}}(\varepsilon_\alpha(f_0))^2.
\end{aligned}
\end{equation}
Combining \eqref{GRK}, \eqref{pc6} and \eqref{pc7}, the $L^\infty_{\xi,\beta}(L_x^{2})$ estimate for $\vec{E}_K$ satisfies ansatz \eqref{pc2}. Similarly, for the $L^\infty_{\xi,\beta}(L_x^{\infty})$ estimate, we still can prove it satisfies the assumption \eqref{pc2}  and the details are omitted here. Therefore, we close ansatz \eqref{pc2}.

Finally, we give the estimate for $f^e$. Since $m-n>1$, for the $X_{\xi;\beta}^{x,2}$ estimate of $f^e$, we have 
\begin{equation*}
\begin{aligned}
\left\|f^e\right\|_{X_{\xi;\beta}^{x,2}}\le\left\|R^K\right\|_{X_{\xi;\beta}^{x,2}}+\left\|e^K\right\|_{X_{\xi;\beta}^{x,2}}.
\end{aligned}
\end{equation*}
By \eqref{pc1}, it follows
\begin{equation}\label{fepk}
\begin{aligned}
\left\|R^K\right\|_{X_{\xi;\beta}^{x,2}}&=\left\|\sum_{k=K+1}^\infty \widehat{f}_{k}\psi_k\right\|_{X_{\xi;\beta}^{x,2}}\le  \sum_{k=K+1}^\infty\| k^n\widehat{f}_{k}\|_{L^\infty_{\xi,\beta}(L_x^{2})}\lesssim (\sum_{k=K+1}^\infty\|  k^m\widehat{f}_{k}\|_{L^\infty_{\xi,\beta}(L_x^{2})})(\sum_{k=K+1}^\infty k^{n-m})\\
&\lesssim\sum_{k=1}^\infty\left\|k^m\int_{I_z}\psi_k(z)\pi(z)R^Kdz\right\|_{L^{\infty}_{\xi,\beta}(L_x^{2})}\lesssim \frac{ (\ln(1+t))^{\alpha-1}}{K^\alpha(1+t)^{\frac12}}\varepsilon_\alpha(f_0).
\end{aligned}
\end{equation}
By \eqref{pc2}, it follows
\begin{equation}\label{feek}
\begin{aligned}
\left\|e^K\right\|_{X_{\xi;\beta}^{x,2}}&=\left\|\sum_{k=1}^K e_k\psi_k\right\|_{X_{\xi;\beta}^{x,2}}\le  \sum_{k=1}^K\| k^ne_k\|_{L^\infty_{\xi,\beta}(L_x^{2})}\lesssim (\sum_{k=1}^K\|  k^me_k\|_{L^\infty_{\xi,\beta}(L_x^{2})})(\sum_{k=1}^Kk^{n-m})\\
&\lesssim\left\|\mathbf{W}\vec{E}_{K}(t)\right\|_{L^\infty_{\xi,\beta}(L_x^2)}\lesssim \frac{ (\ln(1+t))^{\alpha-1}}{K^\alpha(1+t)^{\frac12}}\varepsilon_\alpha(f_0).
\end{aligned}
\end{equation}
Combining Lemma \ref{PCSGT}, \eqref{fepk}, \eqref{feek} and the similar estimates for $\left\|f^e\right\|_{X_{\xi;\beta}^{x,\infty}}$, we have completed the proof of Theorem \ref{ESG}.

%
%
%
%
%
%
%
%
%
%
%
%
%
%
%
%
%
%
%
%
%
%
%%%%%%%%%%%%%%%%%%%%%%%%%%%%%%%%%%%%%%%%%%%%%%%%%%%%%%%%%%%%%%%%%%%%%%%%%%%%%%%%%%%%%%%%%%%%%%%%%%%%%%%%%%%%%%%%%%%%%%%%%%%
%\appendix

%%%%%%%%%%%%%%%%%%%%%%%%%%%%%%%%%%%%%%%%%%%%%%%%%%%%%%%%%%%%%%%%%%%%%%%%%%%%%%%%%%%%%%%%%%%%%%%%%%%%%%%%%%%%%%%%%%%%%%%%%%

\section*{Acknowledgment} This work is partially supported by the National Key R\&D Program of
China under grants 2022YFA1007300, 2024YFA1013302 and 21Z010300242. H.-T. Wang is supported by NSFC
under Grant No. 12371220 and 12031013. S. Jin was supported by NSFC grants No. 12031013 and 12350710181.


\begin{thebibliography}{99}
%\bibitem{grad1963asymptotic} Grad H. Asymptotic theory of the Boltzmann equation, Rarefied Gas Dynamics, J. A. Laurmann, Ed. 1, 26,
%pp.26–59 Academic Press, New York, 1963.
%
%\bibitem{guo2004boltzmann} Guo Y. The Boltzmann equation in the whole space. Indiana Univ. Math. J., 2004: 1081-1094.
%
%\bibitem{yang2010hypocoercivity} Yang T, Yu H. Hypocoercivity of the relativistic Boltzmann and Landau equations in the whole space. J. Differential Equations, 2010, 248(6): 1518-1560.


%\bibitem{daus2021on}Daus E S, Jin S, Liu L. On the multi-species Boltzmann equation with uncertainty and its stochastic Galerkin approximation[J]. ESAIM: Mathematical Modelling and Numerical Analysis, 2021, 55(4): 1323-1345.

\bibitem{briant2015from} M. Briant, From the Boltzmann equation to the incompressible Navier-Stokes equations on the torus: A quantitative error estimate, J. Differential Equations, 259 (2015), 6072-6141.

\bibitem{shu} C. Cercignani, R. Illner, and M. Pulvirenti, The mathematical theory of dilute gases, Applied Mathematical Sciences, Springer-Verlag, New York, 106 (1994). 

\bibitem{g7} B. Despres, G. Poette, and D. Lucor, Robust uncertainty propagation in systems of conservation laws with the entropy closure method, Lect. Notes Comput. Sci. Eng., 92 (2013), 105-149.

\bibitem{EJin}E. S. Daus, S. Jin, L. Liu, On the multi-species Boltzmann equation with uncertainty and its stochastic Galerkin approximation, ESAIM Math. Model. Numer. Anal., 55 (2021), no. 4, 1323-1345.

\bibitem{daus2016hypocoercivity}E. S. Daus, A. J\"{u}ngel, C. Mouhot, et al, Hypocoercivity for a linearized multispecies Boltzmann system, SIAM J. Math. Anal., 48 (2016), no. 1, 538-568.

\bibitem{duan2011} Duan R., Hypocoercivity of linear degenerately dissipative kinetic equations, Nonlinearity, 24 (2011), no. 8, 2165.

%\bibitem{ellis1975first} R. Ellis, M. Pinsky, The first and second fluid approximation to the linearized Boltzmann equation, J. Math. Pure. Appl., 54 (1975), 125-156.

\bibitem{g1}R. G. Ghanem, P. D. Spanos, Stochastic Finite Elements: A Spectral Approach, Springer,
New York, (1991).

\bibitem{g2}M. D. Gunzburger, C. G. Webster, and G. Zhang, Stochastic finite element methods for
partial differential equations with random input data, Acta Numer., 23 (2014), 521-650.

\bibitem{guo2004boltzmann} Y. Guo, The Boltzmann equation in the whole space, Indiana Univ. Math. J., 53 (2004), 1081-1094.

\bibitem{g8}J. Hesthaven, R. Kirby, Filtering in Legendre spectral methods, Math. Comp., 77 (2008), 1425-1452.

\bibitem{g3}J. Hu, S. Jin, A stochastic Galerkin method for the Boltzmann equation with uncertainty,
J. Comput. Phys., 315 (2016), 150-168.

\bibitem{g4}J. Hu, S. Jin, Uncertainty quantification for kinetic equations, SEMA SIMAI Springer Ser., (2017), 193-229.

\bibitem{g5}S. Jin, D. Xiu, and X. Zhu, Asymptotic-preserving methods for hyperbolic and transport
equations with random inputs and diffusive scalings, J. Comput. Phys., 289 (2015), 35-52.

\bibitem{kawashima1990}S. Kawashima, The Boltzmann equation and thirteen moments, Japan J. Appl. Math., 7 (1990), no. 2, 301–320.

\bibitem{W1} Y.-C. Lin, H. Wang, and K.-C. Wu, Stability of background perturbation for Boltzmann equation, J. Math. Pures Appl., 204 (2025), 103762.

%\bibitem{hard} Y.-C. Lin, H. Wang, and K.-C. Wu: Space-time structure and particle-fluid duality of solutions for Boltzmann equation with hard potentials,  (2024), arXiv:2411.11253.

\bibitem{liu2018hypocoercivity} L. Liu, S. Jin, Hypocoercivity based sensitivity analysis and spectral convergence of the stochastic Galerkin approximation to collisional kinetic equations with multiple scales and random inputs, Multiscale Model. Simul., (16) 2018, no. 3, 1085-1114.

%\bibitem{W2} Y.-C. Lin, H.-T. Wang, K.-C. Wu: 3D hard sphere Boltzmann equation: explicit structure and the transition process from polynomial tail to Gaussian tail, (2024), arXiv:2408.02183.

\bibitem{liu2004energy} T.-P. Liu, T. Yang, S.-H. Yu, Energy method for the Boltzmann equation, Phys. D, 188 (2004), no. 3-4, 178-192.

\bibitem{liu2004green} T.-P. Liu, S.-H. Yu, The Green's function and large time behavior of solutions for the one-dimensional Boltzmann equation, Comm. Pure Appl. Math., 57 (2004), no. 12, 1543-1608.

\bibitem{liu2004boltzmann} T.-P. Liu, S.-H. Yu, Boltzmann equation: micro-macro decompositions and positivity of shock profiles, Comm. Math. Phys., 246 (2004), no. 1, 133–179.

\bibitem{[LiuYu1]} T.-P. Liu, S.-H. Yu, Green's function of Boltzmann equation, 3-D waves, Bull. Inst. Math. Acad. Sin., 1 (2006), no. 1, 1-78.

%\bibitem{liu2007initial} T.-P. Liu, S.-H. Yu, Initial-boundary value problem for one-dimensional wave solutions of the Boltzmann equation, Comm. Pure Appl. Math., 60 (2007), no. 3, 295–356.

\bibitem{liu2011solving} T.-P. Liu, S.-H. Yu, Solving Boltzmann equation, Part I : Green's function, Bull. Inst. Math. Acad. Sin. (N.S.), 6 (2011), no. 2 151-243.

%\bibitem{liu2013invariant} T.-P. Liu, S.-H. Yu, Invariant manifolds for steady Boltzmann flows and applications, Arch. Ration. Mech. Anal., 209 (2013), no. 3, 869–997.

\bibitem{MouNeu06} C. Mouhot and L. Neumann, Quantitative perturbative study of convergence to equilibrium
for collisional kinetic models in the torus, Nonlinearity, 19 (2006), no. 4, 969–998.

\bibitem{Nishida1978fluid} T. Nishida, Fluid dynamical limit of the nonlinear Boltzmann equation to the level of the compressible Euler equation, Comm. Math. Phys., 61 (1978), no. 2, 119–148.

\bibitem{krm} Q. Shao, H. Wang, A refined estimate of Green's function for Boltzmann equation, Kinetic and Related Models, 19 (2026), 64-94.

\bibitem {smith2024} R.C. Smith,  Uncertainty quantification: theory, implementation, and applications, Society for Industrial and Applied Mathematics, 2024.

\bibitem{ukai1974existence} S. Ukai, On the existence of global solutions of mixed problem for non-linear Boltzmann equation, Proc. Japan Acad., 50 (1974), no. 3, 179–184.

\bibitem{g6}D. Xiu, Numerical Methods for Stochastic Computations, Princeton University Press, Princeton, NJ, (2010).

%\bibitem{[YangYu]} T. Yang, H. Yu, Spectrum analysis of some kinetic equations, Arch. Ration. Mech. Anal., 222 (2016), no. 2, 731-768.



%\bibitem{yu2010nonlinear} S.-H. Yu, Nonlinear wave propagations over a Boltzmann shock profile, J. Amer. Math. Soc., 23 (2010), no. 4, 1041–1118.
\end{thebibliography}
\end{document}